\newtheorem{theorem}{Theorem}[section]
\newtheorem{corollary}[theorem]{Corollary}
\newtheorem{lemma}[theorem]{Lemma}
\newtheorem{proposition}[theorem]{Proposition}
\newtheorem{definition}[theorem]{Definition}
\newtheorem{remark}[theorem]{Remark}
\newtheorem{conjecture}[theorem]{Conjecture}
\begin{document}

\title[Second main theorem and defect relation]{Second main theorem with tropical hypersurfaces and defect relation}
\date{}
\author[Cao Tingbin]{Cao Tingbin}
\address[Cao Tingbin]{Department of Mathematics, Nanchang University, Nanchang, Jiangxi 330031, P.R. China}
\email{tbcao@ncu.edu.cn}
\thanks{The first author is supported by the National Natural Science Foundation of China (\#11461042) and the outstanding young talent assistance program of Jiangxi Province (\#20171BCB23002) in China.}

\author[ Zheng Jianhua]{Zheng Jianhua}
\address[Zheng Jianhua]{Department of Mathematics, Tsinghua University, Beijing 100084, P.R. China}
\email{jzheng@math.tsinghua.edu.cn}
\thanks{The second author is supported by the National Natural Science Foundation of China (\#11571193).}

\subjclass[2010]{Primary 14T05; Secondary 32H30, 30D35}

\keywords{Tropical Nevanlinna theory; tropical hypersurfaces; defect relation; tropical semiring; tropical holomorphic curves.}

\begin{abstract}The tropical Nevanlinna theory is Nevanlinna theory for tropical functions or maps over the max-plux semiring by using the approach of complex analysis. The main purpose of this paper is to study the second main theorem with tropical hypersurfaces into tropical projective spaces and give a defect relation which can be regarded as a tropical version of the Shiffman's conjecture. On the one hand, our second main theorem improves and extends the tropical Cartan's second main theorem due to Korhonen and Tohge [Advances Math. 298(2016), 693-725]. The growth of tropical holomorphic curve is also improved to $\limsup_{r\rightarrow\infty}\frac{\log T_{f}(r)}{r}=0$ (rather than just hyperorder strictly less than one) by obtaining an improvement of tropical logarithmic derivative lemma. On the other hand, we obtain a new version of tropical Nevanlinna's second main theorem which is different from the tropical Nevanlinna's second main theorem obtained by Laine and Tohge [Proc. London Math. Soc. 102(2011), 883-922]. The new version of the tropical Nevanlinna's second main theorem implies an interesting defect relation that $\delta_{f}(a)=0$ holds for a nonconstant tropical meromorphic function $f$ with $\limsup_{r\rightarrow\infty}\frac{\log T_{f}(r)}{r}=0$ and any $a\in\mathbb{R}$ such that $f\oplus a\not\equiv a.$
\end{abstract}
\maketitle

\section{Introduction}
Recently the so-called ¡°tropical approach¡± to mathematics has attracted much attention from researchers in several
fields
including combinatorics, optimization, mathematical physics and algebraic. Surprisingly, many results from classical
algebraic geometry have tropical analogues. The tropical geometry (see \cite{mikhalkin} or a recent book
 \cite{maclagan-sturmfels}) is a piecewise linear version of complex algebraic geometry, which is geometry over
 the tropical semiring (that is, the max-plus semi-ring first arose in Kleene¡¯s 1956 paper on nerve sets and
 automata \cite{kleene}).\par

The tropical Nevanlinna theory can be seen as Nevanlinna theory over
the tropical semiring by using the approach of complex analysis.
Functions or maps are naturally defined on tropical semiring. In
\cite{halburd-southall}, Halburd and Southall first proved the
tropical versions of Poisson-Jensen theorem, Nevanlinna first main
theorem by introducing tropical versions of Nevanlinna
characteristic function, proximity function, counting function,
logarithmic derivative lemma for tropical meromorphic functions with
finite order similarly as in the classical Nevanlinna theory (see
for examples \cite{hayman, cherry-ye, ru}). Using this theory, they
investigated the existence of finite-order max-plus meromorphic
solutions which can be considered to be an ultra-discrete analogue
of the Painlev\'{e} property, in which they proposed a tropical
version of Clunie theorem. Laine and Tohge \cite{laine-tohge}
considered tropical Laurent series with arbitrary real value powers
and extended the definition of tropical meromorphic functions, then
in this case the multiplicities of poles (respectively, zeros) may
be arbitrary real numbers instead of being integers (respectively,
rationals) which is in certain respects fundamentally different from
the counterparts in the classical meromorphic functions. After
modifying the Halburd-Southall's version of Tropical Poisson-Jensen
theore and first main theorem, Laine and Tohge gave the tropical
Nevanlinna second main theorem for tropical meromorphic functions
with hyperorder strictly less than one. Laine, Liu and Tohge
\cite{laine-liu-tohge} presented tropical counterparts of some
classical complex results related to Fermat type equations, Hayman
conjecture and Br\"{u}ck conjecture.  Recently, Korhonen and Tohge
\cite{korhonen-tohge-2016} extended the tropical Nevanlinna theory
to tropical holomorphic curves in a finite dimensional tropical
projective space, and obtained a tropical version of Cartan second
main theorem for tropical holomorphic curves with hyperorder
strictly less than one. \par

The main purpose of this paper is to study the second main theorem
for tropical holomorphic curves intersecting tropical hypersurfaces
(Theorem \ref{T1}) and then obtain a defect relation (Theorem
\ref{T5}) which can be regarded as a tropical version of the
Shiffman's conjecture in Classical Nevanlinna theorem completely
proved by Ru \cite{ru2004}. The growth condition for tropical
holomorphic curves is improved to the case
$\limsup_{r\rightarrow\infty}\frac{\log T_{f}(r)}{r}=0$ (rather than
just hyperorder  strictly less than one), by obtaining an
improvement of the tropical logarithmic derivative lemma (Theorem
\ref{L1}) due to \cite{halburd-southall, laine-tohge}. As shown in
our main result (Theorem \ref{T1}), we obtain an equality form of
second main theorem under the assumption $ddg(\{P_{M+2}\circ f,
\ldots, P_{q}\circ f\})=0$ so that the Korhonen and Tohge's version
of tropical Cartan second main theorem \cite{korhonen-tohge-2016} is
improved. Moreover, from the improvement of tropical Cartan second
main theorem, we obtain new versions of tropical Nevanlinna second
main theorem (Theorem \ref{T3} and Theorem \ref{T6}) which are
different from the Laine and Tohge's version of tropical second main
theorem, and thus propose an interesting defect relation:
$\delta_{f}(a)=0$ holds for a nonconstant tropical meromorphic
function $f$ with $\limsup_{r\rightarrow\infty}\frac{\log T_{f}(r)
}{r}=0$ and any $a\in \mathbb{R}$ such that $f\oplus a\not\equiv a.$
It is very interesting that the assumption $f\oplus a_{j}\not\equiv
a_{j}$ in our results are clearly better than the assumptions in
Laine and Tohge's version of tropical Nevanlinna second main
theorem. During the study of the tropical Nevanlinna second main
theorem, we find that there exists one gap in the proof of Korhonen
and Tohge's improvement of tropical Nevanlinna second main theorem
due to Laine and Tohge.\par

This paper is organized as follows. In the next section, we will
briefly introduce some definitions and notations of the tropical
semiring, topical linear algebra, Nevanlinna theory for tropical
meromorphic functions, and tropical holomorphic curves into tropical
projective spaces. After proving a strong lemma for a nondecreasing
convex continuous function, we give an improved tropical version of
logarithmic derivative lemma (Theorem \ref{L1}) in Section 3. In
Section 4, we firstly show that the identity of the two definitions
of tropical algebraically nondegenerated and algebraically
independently in the Gondran-Minoux sense, and then propose mainly
the second main theorem with tropical hypersurfaces (Theorem
\ref{T1}). This main result improves the tropical Cartan second main
theorem (Corollary \ref{C0}). After establishing the tropical first
main theorem for tropical holomorphic curve intersecting tropical
hypersurfaces (Theorem \ref{T4}) similarly as in Classical
Nevanlinna theory, we then obtain the defect relation (Theorem
\ref{T5}) according to Theorem \ref{T1}. The proof of our main
result (Theorem \ref{T1}) is individually proved in Section 5. The
new versions of tropical Nevanlinna second main theorem (Theorem
\ref{T3} and Theorem \ref{T6}) will be discussed in Section 6, from
which we also obtained an interesting result on defect relation for
tropical meromorphic functions (Theorem \ref{C1}).

\section{Preliminaries}
According to \cite{pin} the term "tropical" appeared in computer science in honor of Brazil and,
more specifically, after Imre Simon (who is a Brazilian computer scientist) by Dominique Perrin.
In computer science the term is usually applied to $(\min, +)-$semiring. Here we use the tropical
(max-plus) semiring $(\mathbb{R}\cup\{-\infty\}, \oplus, \odot)$ endowing $\mathbb{R}\cup\{-\infty\}$
with (tropical) addition
$$x\oplus y:=\max(x,y)$$ and (tropical) multiplication $$x\odot y:=x+y.$$ But the two semirings
are isomorphic each other by $x\mapsto -x.$ We also use notations
$x\oslash y:=\frac{x}{y}\oslash=x-y$ and $x^{\odot a}:=ax$ for $a\in
\mathbb{R}.$ The identity elements for the tropical operations are
$0_{o}=-\infty$ for addition and $1_{o}=0$ for multiplication (see
for examples, \cite{mikhalkin, halburd-southall, laine-tohge}).\par

\begin{definition}\cite{korhonen-laine-tohge} A tropical entire function $f: \mathbb{R}\rightarrow\mathbb{R}$
is a finite or infinite linear combination of tropical monomials:
\begin{eqnarray*}
f(x):=\bigoplus_{k=0}^{+\infty}c_{k}\odot x^{\odot s_{k}}
=\max_{k=0}^{+\infty}\{s_{k}x+c_{k}\}
\end{eqnarray*}
where $c_{1}, c_{2}, \ldots$ and $s_{1}, s_{2}, \ldots$ are real numbers such that
$c_{1}>c_{2}>\ldots(\rightarrow -\infty)$ and $s_{0}<s_{1}<\cdots.$ The tropical polynomial
$P: \mathbb{R}\rightarrow\mathbb{R}$ defined as a finite linear combination of tropical monomials.
\end{definition}
Remark  that a tropical entire function $f: \mathbb{R}\rightarrow\mathbb{R}$ has the following three important
properties:\par

(i) $f$ is continuous,\par
(ii)$f$ is piecewise-linear, where the number of pieces is finite or infinite,\par
(iii)$f$ is convex, that is $f(\frac{x+y}{2})\leq\frac{f(x)+f(y)}{2}$ for all $x, y\in\mathbb{R}.$\par

\begin{definition} \cite{korhonen-laine-tohge} A continuous piecewise linear function
$f:\mathbb{R}\rightarrow\mathbb{R}$ is called to be tropical meromorphic. A point $x\in\mathbb{R}$
of derivative discontinuity of $f$ is a pole of $f$ with multiplicity $-\omega_{f}(x)$ whenever
$$\omega_{f}(x):=\lim_{\varepsilon\rightarrow 0+}\{f'(x+\varepsilon)-f'(x-\varepsilon)\}<0,$$
and a zero (or root) with multiplicity $\omega_{f}(x)$ if $\omega_{f}(x)>0.$\end{definition}

Note that the multiplicities are positive numbers, not necessarily
integers as it is the case in classical complex analysis. From the
viewpoint of geometry, we see that a zero (or pole) of $f$ is the
point $x\in\mathbb{R}$ at which the graph of $f$ is nonlinear and
convex (or concave). Korhonen and Tohge \cite[Proposition
3.3]{korhonen-tohge-2016} proved that for any tropical meromorphic
function $f,$ there exist two tropical entire functions $g$ and $h$
such that $f=h\oslash g,$ where $g$ and $h$ do not have any common
zeros.  \par

The tropical proximity function for tropical meromorphic functions $f$ in one real variable is defined by
\begin{equation*}m(r,f):=\frac{1}{2}\sum_{\sigma=\pm 1}f^{+}(\sigma r)=\frac{1}{2}\{f^{+}(r)+f^{+}(-r)\}
\end{equation*}where $f^{+}(x)=\max\{f(x), 0\}.$ Denote by $n(r,f)$ the number of poles of $f$, counted with
multiplicities, in the interval $(-r, r),$ i.e.,
$$n(r,f)=\sum_{|b_{\nu}|<r}\tau_{f}(b_{\nu})$$
and define the tropical counting function \begin{equation*}
N(r,f):=\frac{1}{2}\int_{0}^{r}n(t,
f)dt=\frac{1}{2}\sum_{|b_{\nu}|<r}\tau_{f}(b_{\nu})(r-|b_{\nu}|)
\end{equation*} where all $b_{\nu}$ are poles of $f.$ The tropical Nevanlinna characteristic function is given by
\begin{equation*}T(r,f):=m(r,f)+N(r,f)
\end{equation*} which is then an increasing convex function of $r$ by the tropical Cartan identity
\cite[Theorem 3.8]{korhonen-laine-tohge}. For example, a nonconstant tropical rational function satisfies
$T(r, f)=O(r)$ (see \cite[Theorem 1.14]{korhonen-laine-tohge}). The tropical Poison-Jensen formula
\cite{halburd-southall, laine-tohge} (see also \cite[Theorem 3.1]{korhonen-laine-tohge}) implies the
tropical Jensen formula
\begin{equation}\label{E1.0}
N(r,1_{0}\oslash f)-N(r,f)= \frac{1}{2}\sum_{\sigma=\pm 1}f(\sigma r)-f(0),
\end{equation}which is very useful and will be used many times throughout this paper. Furthermore,
there exists the tropical first main theorem (see \cite[Theorem 3.5]{korhonen-laine-tohge})
\begin{equation} \label{E1.1}T(r, 1_{o}\oslash (f\oplus a))=T(r, f)+O(1)\end{equation}
provided that $-\infty<a<L_{f}:=\inf\{f(b): \omega_{f}(b)<0\}.$\par

Since the tropical linear space $L$ in $\mathbb{R}^{n+1}$ is closed
under tropical scalar multiplication,
$L=L+\mathbb{R}(1,1,\ldots,1).$ Therefore the tropical projective
space is defined as
$\mathbb{TP}^{n}=\mathbb{R}^{n+1}/\mathbb{R}(1,1,\ldots,1).$ That
is, $\mathbb{TP}^n=\mathbb{R}_{\max}^{n+1}\setminus\{0_{o}\}/\sim$
where $(a_{0}, a_{1}, \ldots, a_{n})\sim (b_{0}, b_{1}, \ldots,
b_{n})$ if and only if
$$(a_{0}, a_{1}, \ldots, a_{n})=\lambda\odot (b_{0}, b_{1}, \ldots, b_{n})
=(\lambda\odot b_{0}, \lambda\odot b_{1}, \ldots, \lambda\odot b_{n})$$
for some $\lambda\in \mathbb{R}.$ Denote by $[a_{0}: a_{1}: \cdots: a_{n}]$
the equivalence class of $(a_{0}, a_{1}, \ldots, a_{n}).$ For instance, $\mathbb{TP}^{1}$
is just  the completed max-plus semiring $\mathbb{R}_{\max}\cup\{+\infty\}=\mathbb{R}\cup\{\pm\infty\}.$\par

Let $f:=[f_{0}: f_{1}:\cdots: f_{n}]:\mathbb{R}\rightarrow \mathbb{TP}^{n}$ be a tropical holomorphic map where $f_{0}, f_{1}, \ldots, f_{n}$ are tropical entire functions and do not have any zeros which are common to all of them. Denote $\mathbf{f}=(f_{0}, f_{1}, \ldots, f_{n}):\mathbb{R}\rightarrow \mathbb{R}^{n+1}.$ Then the map $\mathbf{f}$ is called a reduced representation of the tropical holomorphic curve $f$ in $\mathbb{TP}^{n}.$ The tropical Cartan characteristic function of $f$ is defined by
\begin{equation*}T_{f}(r):=\frac{1}{2}\sum_{\sigma=\pm 1}\|f(\sigma r)\|-\|f(0)\|=\frac{1}{2}[\|f(r)\|+\|f(-r)\|]-\|f(0)\|
\end{equation*}where $\|f(x)\|=\max\{f_{0}(x), \ldots, f_{n}(x)\}$ is defined according to canonical coordinates of $\mathbb{TP}^{n}.$   It is well defined, that is, $T_{f}(r)$ is independent of the reduced representation of $f$ \cite[Proposition 4.3]{korhonen-tohge-2016}. The order and hyperorder of $f$ are given by
\begin{eqnarray*}
\rho(f)=\limsup_{r\rightarrow\infty}\frac{\log T_{f}(r)}{\log r},
\end{eqnarray*}and
\begin{eqnarray*}
\rho_{2}(f)=\limsup_{r\rightarrow\infty}\frac{\log\log T_{f}(r)}{\log r},
\end{eqnarray*}
respectively. If $f$ is a tropical meromorphic function, then $T_{f}(r)=T(r,f)+O(1)$
(see \cite[Proposition 4.4]{korhonen-tohge-2016}).
\par

The operations of tropical addition $\oplus$ and tropical multiplication $\odot$ for the $(n+1)\times(n+1)$
matrices $A=(a_{ij})$ and $B=(b_{ij})$ are defined by $$A\oplus B=(a_{ij}\oplus b_{ij})$$ and
$A\odot B=\left(\bigoplus _{k=0}^{n}a_{ik}\odot b_{kj}\right),$ respectively. If an $(n+1)\times(n+1)$
matrix $A$ contains at least one element different from $0_{o}$ in each row, then $A$ is called regular.
The tropical determinant $|A|_{o}$ of $A$ is defined by \begin{equation*}
 |A|_{o}=\bigoplus a_{0\pi(0)}\odot a_{1\pi(1)}\odot\cdots\odot a_{n\pi(n)},
 \end{equation*}where the sum is taken over all permutations $\{\pi(0), \pi(1), \ldots, \pi(n)\}$ of $\{0, 1, \ldots, n\}.$ Note that an $(n+1)\times(n+1)$ matrix $A$ is regular if and only if $|A|_{0}\neq 0_{o}.$\par

Choose $c\in\mathbb{R}\setminus\{0\}.$ Let $f:\mathbb{R}\rightarrow\mathbb{TP}^{n}$  be a tropical holomorphic map with reduced representation $(f_{0}, f_{1}, \ldots, f_{n}).$ We use short notations $$\overline{f_{j}}^{[0]}:=f_{j}(x),\quad \overline{f_{j}}^{[1]}:=f_{j}(x+c), \quad \overline{f_{j}}^{[k]}:=f_{j}(x+kc)=f_{j}(x\odot c^{\odot k})$$ for all $j, k\in\{0, 1, \ldots, n\}.$ The tropical Casorati determinant, or tropical Casoratian, of $f$ is defined by
\begin{eqnarray*}C_{o}(f):=C_{o}(f_{0}, f_{1}, \ldots, f_{n})=\bigoplus \overline{f_{0}}^{[\pi(0)]}\odot \overline{f_{1}}^{[\pi(1)]}\odot\cdots\odot\overline{f_{n}}^{[\pi(n)]}
\end{eqnarray*} where the sum is taken over all permutations $\{\pi(0), \ldots, \pi(n)\}$ of $\{0, \ldots, n\}.$\par

Tropical meromorphic functions $g_{0}, \ldots, g_{n}$ are linearly dependent (respectively independent)
in the Gondran-Minoux sense \cite{gondran-minoux-1, gondran-minoux-2} if there exist (respectively there
do not exist) two disjoint subsets $I$ and $J$ of $K:=\{0, \ldots, n\}$ such that $I\cup J=K$ and
\begin{eqnarray*} \bigoplus _{i\in I}a_{i}\odot g_{i} =\bigoplus _{j\in J}a_{j}\odot g_{j},
\end{eqnarray*}that is,
\begin{eqnarray*} \max_{i\in I}\{a_{i}+g_{i}\} =\max_{j\in J}\{a_{j}+g_{j}\},
\end{eqnarray*}where the constants $a_{0}, a_{1}, \ldots, a_{n}\in\mathbb{R}_{\max}$ are not all equal to $0_{o}.$
If $a_{0}, \ldots, a_{n}\in\mathbb{R}_{\max}$ and $f_{0}, \ldots, f_{n}$ are tropical entire functions, then
\begin{eqnarray*}
F=\bigoplus_{\nu=0}^{n} a_{\nu}\odot f_{\nu}=\bigoplus_{i=1}^{j}a_{k_i}\odot f_{k_i}
\end{eqnarray*} is called a tropical linear combination of $f_{0}, f_{1}, \ldots, f_{n}$ over $\mathbb{R}_{\max},$
where the index set $\{k_{1}, \ldots, k_{j}\}\subset\{0, \ldots,
n\}$ is such that $a_{k_i}\in \mathbb{R}$ for all $i\in\{1, \ldots,
j\},$ while $a_{\nu}=0_{o}$ if $\nu\not\in\{k_{1}, \ldots, k_{j}\}.$
Note that if $f_{0}, \ldots, f_{n}$ are linearly independent in the
sense of Gondran and Minous, then the express of $F$ cannot be
rewritten by means of any other index set which is different from
the set $\{k_{1}, \ldots, k_{j}\}.$ \par

Let $G=\{f_{0}, \ldots, f_{n}\}(\neq \{0_{o}\})$ be a set of tropical entire functions, linearly independent in
the Gondran-Minoux sense, and denote \begin{equation*}\mathcal{L}_{G}=span<f_{0}, \ldots, f_{n}>
=\left\{\bigoplus_{k=0}^{n}a_{k}\odot f_{k}: (a_{0}, \ldots, a_{n})\in\mathbb{R}^{n+1}_{\max}\right\}
\end{equation*} to  be their linear span. The collection $G$ is called the spanning basis of $\mathcal{L}_{G}.$
The dimension of $\mathcal{L}_{G}$ is defined by
$$\dim(\mathcal{L}_{G})=\max\{\ell(F): F\in \mathcal{L}_{G}\setminus\{0_{o}\}\},$$ where $\ell(F)$ is the
shortest length of the representation of $F\in \mathcal{L}_{G}\setminus\{0_{o}\}$ defined by
$$\ell(F)=\min\{j\in\{1,\ldots, n+1\}: F=\bigoplus_{i=1}^{j}a_{k_i}\odot f_{k_i}\}$$ where
$a_{k_i}\in\mathbb{R}$ with integers $0\leq k_1<k_2<\cdots <k_j\leq
n.$ Note that usually the dimension of the tropical linear span
space of $G$ may not be n+1, which is different from the classical
linear algebraic. If $\ell(F)=n+1$ for a tropical linear combination
$F$ of $f_{0}, \ldots, f_{n},$ then $F$ is said to be complete, that
is, the coefficients $a_{k}$ in any expression of $F$ of the form
$F=\bigoplus_{k=0}^{n}a_{k}\odot f_{k}$ must satisfy $a_{k}\in
\mathbb{R}$ for all $k\in\{0, \ldots, n\}$ and in this case,
$\mathcal{L}_{G}=n+1.$\par

Let $G=\{f_{0}, \ldots, f_{n}\}$ be a set of tropical entire functions, linearly independent in the Gondran-Minoux
sense, and let $Q\subset\mathcal{L}_{G}$ be a collection of tropical linear combinations of $G$ over
$\mathbb{R}_{\max}.$ The degree of degeneracy of $Q$ is defined to be $$ddg(Q):=card (\{F\in Q: \ell(F)<n+1\}).$$
If $ddg(Q)=0,$ then we say $Q$ is non-degenerate. This means that the degree of degeneracy of a set of tropical
linear combinations is the number of its non-complete elements. In this way the number of complete elements of
$Q$ is the 'actual dimension' of the subspace spanned by $Q,$ and thus the $ddg(Q)$ is the `codimension' of the
subspace spanned by $Q$ (see \cite[Page 120-121]{korhonen-laine-tohge}). \par

\section{Tropical version of logarithmic derivative lemma}
In the classical Nevanlinna theory, the logarithmic derivative lemma
plays a key role in significant expression of the Nevanlinna second
main theorem for meromorphic functions and Cartan second main
theorem for holomorphic curve intersecting hyperplanes. The tropical
analogue of the lemma on the logarithmic derivative for tropical
meromorphic functions with finite order was obtained by Halburd and
Southall \cite{halburd-southall}, and was extended to the case of
hyper-order strictly less than one by Laine and Tohge
\cite{laine-tohge}. In this section, we will improve the tropical
logarithmic derivative lemma, and extend the condition of growth of
meromorphic functions to the case not exceeding to the hyperorder 1
minimal type, i.e., $\limsup_{r\rightarrow\infty} \frac{\log
T_{f}(r)}{r}=0$ (rather than just hyperorder strictly less than
one). This result will be used to prove our tropical second main
theorem with tropical hypersurfaces in next section.
\par

\begin{theorem}(Tropical version of logarithmic derivative lemma)\label{L1} Let $c\in\mathbb{R}\setminus\{0\}.$
If $f$ is a tropical meromorphic function on $\mathbb{R}$ with
\begin{equation}\label{E-8.0} \limsup_{r\rightarrow\infty}
\frac{\log T_{f}(r)}{r}=0,
\end{equation}  then
\begin{equation*}
m(r, f(x+c)\oslash f(x))=o\left(T_{f}(r)\right)
\end{equation*} where $r$ runs to infinity outside of a set of zero upper density measure $E,$ i.e.,
$$\overline{dens}E=\limsup_{r\rightarrow\infty}\frac{1}{r}\int_{E\cap [1,r]}dt=0.$$
\end{theorem}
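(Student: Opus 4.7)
The plan is to reduce the tropical proximity $m\bigl(r,f(x+c)\oslash f(x)\bigr)$ to an increment estimate for the tropical characteristic $T_f(r)$ itself, and to establish that increment estimate by a Borel-type covering argument tailored to the growth hypothesis (\ref{E-8.0}). The key enabler is a new auxiliary real-variable lemma (the one advertised in the introduction) for nondecreasing convex continuous functions, which upgrades the exceptional-set bounds used by Halburd--Southall and Laine--Tohge from finite logarithmic measure to zero upper density.

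First I would prove the auxiliary lemma: if $\phi:[0,\infty)\to[0,\infty)$ is nondecreasing, continuous, and convex with $\limsup_{r\to\infty}\frac{\log\phi(r)}{r}=0$, then for every fixed $c>0$ one has $\phi(r+c)-\phi(r)=o(\phi(r))$ as $r\to\infty$ outside a set $E\subset[1,\infty)$ of zero upper density. Convexity yields $\phi(r+c)-\phi(r)\le c\,\phi'_{+}(r+c)\le c\bigl(\phi(r+c+1)-\phi(r+c)\bigr)$, so the question reduces to comparing $\phi$ at two points a bounded distance apart. Iterating a single doubling-type step (in the spirit of Borel's lemma, but with the length scale chosen so that the escaping set has vanishing upper density rather than finite logarithmic measure) and closing the induction with $\log\phi(r)=o(r)$ should give the required conclusion.

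Next, by the Korhonen--Tohge representation $f=h\oslash g$ with tropical entire $h,g$ having no common zeros, I would bound the translated difference $f(x+c)-f(x)=[h(x+c)-h(x)]-[g(x+c)-g(x)]$ by the one-sided slopes of $h$ and $g$ on the interval in question, together with the contribution of the poles of $f$ crossed during the translation. Substituting into
\[
m\bigl(r,f(x+c)\oslash f(x)\bigr)=\tfrac{1}{2}\sum_{\sigma=\pm 1}\bigl(f(\sigma r+c)-f(\sigma r)\bigr)^{+}
\]
and absorbing the pole contribution via the tropical Jensen formula (\ref{E1.0}) applied both to $f$ and to $f(\cdot+c)$, I expect an estimate of the shape
\[
m\bigl(r,f(x+c)\oslash f(x)\bigr)\le C\bigl(T_f(r+|c|)-T_f(r-|c|)\bigr)+O(1)
\]
for some absolute constant $C$. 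Since the tropical Cartan identity makes $T_f$ nondecreasing and convex in $r$, the auxiliary lemma applies with $\phi=T_f$, and its hypothesis is precisely (\ref{E-8.0}); this produces the desired bound outside a set of zero upper density.

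The main obstacle is the auxiliary lemma itself. The Borel-type arguments used in the prior tropical literature give an exceptional set of only \emph{finite logarithmic measure}, which is too coarse once $T_f(r)$ is merely constrained by (\ref{E-8.0}) and may grow arbitrarily close to $e^{o(r)}$. Forcing the exceptional set to have \emph{zero upper density} requires a finer iterative scheme and a delicate book-keeping of the increments $\phi(r+c)-\phi(r)$ against $\phi(r)$ on translates of intervals of bounded length; this is where I expect the bulk of the technical work to lie, and it is precisely what enables the improvement over the hyperorder-strictly-less-than-one regime.
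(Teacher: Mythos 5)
Your overall strategy is genuinely different from the paper's. The paper does not re-derive the proximity estimate at all: it quotes the known bound $m(r,f(x+c)\oslash f(x))\le\frac{16|c|}{r+|c|}\frac{1}{\alpha-1}T_f(\alpha(r+|c|))+\frac{|f(0)|}{2}$ from Korhonen--Laine--Tohge, chooses $\alpha-1=(r+|c|)^{\delta-1}(\log T_f(r+|c|))^{-\delta}$ with $\delta\in(0,\frac12)$ so that the prefactor is $o(1)$, and then must show that $T_f$ evaluated at $r+\phi(r)$ with the \emph{growing} shift $\phi(r)\approx(r/\log T_f(r))^{\delta}$ is still $(1+o(1))T_f(r)$. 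That is precisely what Lemma \ref{L2} supplies, and Lemma \ref{L2} is proved not by a Borel iteration but by integrating $T'/T$: convexity reduces the increment $T(r+\phi(r))-T(r)$ to the single-point quantity $\phi(r)T'(r+\phi(r))$, and the threshold $\tau(r)\sim\sqrt{\log T(r)/r}$ is chosen so that simultaneously $\tau(r)\phi(r)\to0$ and the exceptional measure is at most $\log T(r)/\tau(r)=o(r)$. Your plan instead re-derives the proximity bound from slopes so that only a \emph{fixed}-shift comparison is needed; for that weaker statement your Borel-type iteration does work (each $\varepsilon$-jump multiplies $\phi$ by $1+\varepsilon$, so the jump set in $[1,R]$ has measure $O(\log\phi(R))=o(R)$, and a diagonal over $\varepsilon=1/n$ produces a single zero-upper-density set), and convexity is not even needed there. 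If carried out, this route is arguably more elementary than the paper's.

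The one step that fails as written is the intermediate inequality $m(r,f(x+c)\oslash f(x))\le C\bigl(T_f(r+|c|)-T_f(r-|c|)\bigr)+O(1)$. What the slope argument actually yields is $m(r,f(x+c)\oslash f(x))\le|c|\bigl[|f'(0)|+n(r+2|c|,f)+n(r+2|c|,1_o\oslash f)\bigr]$, and then $n(\rho,f)+n(\rho,1_o\oslash f)\le\frac{2}{s}\bigl[\mathcal{N}(\rho+s)-\mathcal{N}(\rho)\bigr]$ with $\mathcal{N}:=N(\cdot,f)+N(\cdot,1_o\oslash f)$. You cannot convert this increment of $\mathcal{N}$ into an increment of $T_f$: since $2T(\cdot,f)-\mathcal{N}=m(\cdot,f)+m(\cdot,1_o\oslash f)$ is not monotone, $\mathcal{N}(\rho+s)-\mathcal{N}(\rho)$ may exceed $2[T_f(\rho+s)-T_f(\rho)]$ by as much as $m(\rho,f)+m(\rho,1_o\oslash f)$, which can be of order $T_f(\rho)$; and the Jensen formula \eqref{E1.0} controls the \emph{difference} $N(r,1_o\oslash f)-N(r,f)$, not the sum, so it cannot absorb this discrepancy. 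The repair is local: apply your fixed-shift lemma directly to $\mathcal{N}$, which is nondecreasing, convex (being $\frac12\int_0^r(n(t,f)+n(t,1_o\oslash f))\,dt$ with nondecreasing integrand), and satisfies $\mathcal{N}\le2T_f+O(1)$, hence $\limsup_{r\to\infty}\log\mathcal{N}(r)/r=0$. This gives $\mathcal{N}(\rho+s)-\mathcal{N}(\rho)=o(\mathcal{N}(\rho))=o(T_f(r))$ outside a set of zero upper density, which is all your argument requires.
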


\begin{remark}\label{R0}(i). We note that the condition \eqref{E-8.0} implies that $\rho_{2}(f)\leq 1$ and
the equality can possibly take happened. In fact, assume that
\eqref{E-8.0} holds, then there exists $r_{0}>0$ such that for any
$r>r_{0},$ we have $\log T_{f}(r)<r$ and thus $\rho_{2}(f)\leq 1.$
Moreover, whenever $f$ is taken to satisfy, for example $\log
T_{f}(r)=\frac{r}{(\log r)^{m}}$ where $m> 0,$  one can easily get
both \eqref{E-8.0} and $\rho_{2}(f)=1.$ Hence, Theorem \ref{L1} is
an improvement of the tropical logarithmic derivative lemma obtained
in \cite{halburd-southall, laine-tohge} for hyperorder strictly less
than one.\par

(ii). By the improved version of tropical logarithmic derivative lemma,
the tropical Clunie and Mohon'ko type theorems \cite[Corollaries 4.4, 4.11, 4.12 and 7.15]{korhonen-laine-tohge}
(see also \cite{halburd-southall, laine-tohge, laine-yang}) can be also improved to the case
$\limsup_{r\rightarrow \infty}\frac{\log T_{f}(r)}{r}=0.$
\end{remark}

Before giving the proof, we show the following lemma which is an
improvement of a result on growth properties of nondecreasing
continuous real functions (\cite[Lemma 2.1]{halburd-korhonen2007}
and \cite{halburd-korhonen-tohge2014}). Here we establish a strong
lemma of real convex functions for improving the tropical
logarithmic derivative lemma, an analogue result for a logarithmic
convex function is obtained in \cite{ZK}.\par

\begin{lemma}\label{L2} Let $T(r)$ be a nondecreasing positive, convex, continuous function on $[1, +\infty)$ with
\begin{equation*}
\liminf_{r\rightarrow\infty} \frac{\log T(r)}{r}=0.
\end{equation*} Then for the function $$\phi(r):=\max_{1\leq t\leq
r}\left\{\left(\frac{t}{\log T(t)}\right)^{\delta}\right\}, \quad
\delta\in(0, \frac{1}{2}),$$ we have
\begin{equation*}
T(r)\leq T(r+\phi(r))\leq (1+\varepsilon(r))T(r),
\end{equation*} where $\varepsilon(r)\to 0$
as $r$ tends to infinity outside of a set of zero lower density measure $E,$ i.e.,
$$\underline{dens}E=\liminf_{r\rightarrow\infty}\frac{1}{r}\int_{E\cap [1,r]}dt=0;$$\par
Especially, for any  fixed positive real value $c(\neq 0),$
\begin{equation*}
T(r)\leq T(r+c)\leq (1+\varepsilon(r))T(r),\ r\not\in E\to\infty.
\end{equation*}

Furthermore, if the growth assumption is changed into  \begin{equation*}
\limsup_{r\rightarrow\infty} \frac{\log T(r)}{r}=0,
\end{equation*}
then the exceptional set $E$ is a set with zero upper density
measure, i.e.,
$$\overline{dens}E=\limsup_{r\rightarrow\infty}\frac{1}{r}\int_{E\cap
[1,r]}dt=0.$$
\end{lemma}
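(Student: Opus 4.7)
The plan is to use a Borel-type greedy covering. For each fixed $\eta > 0$ I would define
\[
E_\eta := \{r \ge 1 : T(r+\phi(r)) > (1+\eta)\, T(r)\}
\]
and show that $E_\eta$ has upper (respectively lower) density zero under the $\limsup$ (respectively $\liminf$) growth hypothesis. A diagonal argument over $\eta \searrow 0$ will then yield a single $\varepsilon(r) \to 0$ whose exceptional set has the claimed density.

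To estimate $|E_\eta \cap [1, R]|$, I would construct a greedy chain $s_1 < s_2 < \cdots < s_N$ inside $E_\eta \cap [1, R]$ by taking $s_1 = \inf(E_\eta \cap [1, R])$ and $s_{n+1} = \inf(E_\eta \cap [s_n + \phi(s_n), R])$ whenever the infimum exists. Any $r \in E_\eta \cap [1, R]$ must lie in some $[s_n, s_n + \phi(s_n)]$, so
\[
|E_\eta \cap [1, R]| \le \sum_{n=1}^{N}\phi(s_n) \le N\phi(R),
\]
since $\phi$ is nondecreasing. Along the chain, $T(s_{n+1}) \ge T(s_n + \phi(s_n)) > (1+\eta)\,T(s_n)$; iterating gives $T(s_N) > (1+\eta)^{N-1} T(s_1)$, and $T(s_N) \le T(R)$ forces $N \le \log T(R)/\log(1+\eta) + O(1)$. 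This produces the basic inequality
\[
\frac{|E_\eta \cap [1, R]|}{R}\ \le\ \frac{C\,\phi(R)\log T(R)}{\eta\,R}.
\]

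The main technical obstacle will be bounding $\phi(R)$ from above: while $\phi(R) \ge (R/\log T(R))^\delta$ is immediate from the definition, the argmax $t^{\ast}$ could in principle be strictly smaller than $R$ with $\log T(t^{\ast})/t^{\ast}$ much smaller than $\log T(R)/R$. Here the convexity of $T$ is what is expected to do the work, preventing such dips except on a thin transitional set that can be absorbed into the final exceptional set, so that $\phi(R) \le C'(R/\log T(R))^\delta$ holds outside this thin set. Granting this, the previous ratio is dominated by $(\log T(R)/R)^{1-\delta}/\eta$. The $\limsup$ hypothesis forces this expression to zero uniformly in $R$, giving $\overline{dens}(E_\eta) = 0$; the $\liminf$ hypothesis lets us evaluate along the sequence where $\log T(R)/R \to 0$, giving $\underline{dens}(E_\eta) = 0$. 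The final diagonalization, with $\eta_k = 1/k$ and thresholds $R_k \nearrow \infty$ chosen so that $|E_{\eta_k}\cap [1,R]|/R < 1/k$ for $R \ge R_k$ and $\varepsilon(r) := \eta_k$ on $[R_k, R_{k+1})$, produces $\varepsilon(r) \to 0$ with the requisite exceptional set; the fixed-$c$ conclusion then follows since $\phi(r) \to \infty$ (as $r/\log T(r) \to \infty$ on some sequence and $\phi$ is nondecreasing), so eventually $\phi(r) > c$ and the monotonicity of $T$ gives $T(r+c) \le T(r+\phi(r)) \le (1+\varepsilon(r))T(r)$ outside $E$.
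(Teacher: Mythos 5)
Your greedy covering correctly yields $\abs{E_\eta\cap[1,R]}\le N\phi(R)$ with $N\le \log\bigl(T(R)/T(1)\bigr)/\log(1+\eta)+1$, but the resulting bound $\phi(R)\log T(R)/(\eta R)$ does \emph{not} tend to zero under the hypotheses, and the convexity rescue you propose for $\phi$ is not available. The obstruction you flag is real and fatal as stated: take $T\equiv e$ on $[1,A_1]$, let $T$ rise linearly (hence convexly) on $[A_1,2A_1]$ up to $\exp(A_1/\log A_1)$, then continue with constant slope until $A_2:=A_1e^{A_1/\log A_1}$, and iterate. This $T$ is positive, nondecreasing, convex, continuous, and satisfies $\limsup_r \log T(r)/r=0$; yet $\phi(R)\ge (A_k/\log T(1))^{\delta}\sim A_k^{\delta}$ for all $R\ge A_k$, while $\log T(2A_k)\sim A_k/\log A_k$, so $\phi(R)\log T(R)/R\gtrsim A_k^{\delta}/\log A_k\to\infty$ on the whole interval $[2A_k,CA_k]$, a set of upper density arbitrarily close to $1$. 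The same example kills the claimed repair $\phi(R)\le C'(R/\log T(R))^{\delta}$ outside a thin set: convexity of $T$ only forces $T'$ (not $(\log T)'$) to be nondecreasing, so $\log T$ may stagnate for exponentially long stretches after a steep climb, and the early flat era is remembered by $\phi$ forever. The same difficulty blocks the $\liminf$ half: along a sequence with $\log T(R_j)/R_j\to 0$ the crude bound only gives $\phi(R_j)\log T(R_j)/R_j=O\bigl(\log T(R_j)/R_j^{1-\delta}\bigr)$, which need not vanish (e.g.\ $\log T(R_j)=R_j/\log R_j$). Note the lemma itself is not refuted by this example --- the set where $T(r+\phi(r))>(1+\eta)T(r)$ there is actually tiny --- it is only your upper bound for it that becomes vacuous, because $N$ is estimated by the total number of possible doublings of $T$ rather than by the number that actually occur inside $E_\eta$.

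For comparison, the paper avoids counting doubling steps altogether: it uses convexity in the form $T(R)\le T(r)+T'(R)(R-r)$ with $R=r+\phi(r)$, defines the exceptional set directly as
\begin{equation*}
E=\Bigl\{r:\ \tfrac{T'(r+\phi(r))}{T(r+\phi(r))}\ge \tau(r)\Bigr\}
\end{equation*}
for a slowly decaying threshold $\tau(r)\approx\sqrt{\log T(r+\phi(r))/r}$, and bounds $\int_{E\cap[1,r]}dt$ by integrating $d\log T(t+\phi(t))$, which gives $\tau(r)^{-1}\log T(r+\phi(r))\approx \sqrt{r\log T(r+\phi(r))}=o(r)$; off $E$ the convexity inequality directly yields the multiplicative error $1+\tau(r)\phi(r)/(1-\tau(r)\phi(r))$. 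If you want to salvage your route, you would need to bound $N$ by the number of $(1+\eta)$-doublings of $T$ that occur \emph{at points of} $E_\eta$ rather than by $\log T(R)$, which is essentially what the derivative-based argument accomplishes.
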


\begin{proof} Since $T(r)$ is a nondecreasing positive, convex, continuous function on $[1, +\infty),$
it follows that for any $r\in[1, \infty)$ and $R=r+\phi(r),$ we have
\begin{eqnarray*}
T(R)&\leq&T(r)+\frac{dT(R)}{dR}\left(R-r\right)\\
&\leq& T(r)+\frac{T^{'}(r+\phi(r))}{T(r+\phi(r))} \phi(r) T(r+\phi(r)).
\end{eqnarray*}
Define
\begin{eqnarray*}
\hat{\tau}(r)=\sqrt{\frac{\log T(r+\phi(r))-T(1+\phi(1))}{r}}, \,\, r\in[1, +\infty).
\end{eqnarray*} Since \begin{equation*}
\liminf_{r\rightarrow\infty} \frac{\log T(r)}{r}=0,
\end{equation*} there exists one sequence $r_{n}$ with $\lim_{n\rightarrow \infty}r_{n}=\infty$ such that
\begin{eqnarray*}
\hat{\tau}(r_{n})=\min_{1\leq t\leq r_{n}}\hat{\tau}(t).
\end{eqnarray*} This gives \begin{eqnarray*}
0<\hat{\tau}(r_{n+1})\leq \hat{\tau}(r_{n})\rightarrow 0
\end{eqnarray*} as $n\rightarrow \infty.$ Define $\tau(r):=\hat{\tau}(r_{n})$ for $r\in[r_{n}, r_{n+1}]$
and so $\tau(r)$ tends to zero as $r$ tends to $\infty.$ Now
consider the set
$$E:=\{r\in[1, +\infty): \frac{T'(r+\phi(r))}{T(r+\phi(r))}\geq
\tau(r)\}.$$ Then by the Riemann-Stieljies integral, it follows that
\begin{eqnarray*}
&&\log T(r+\phi(r))-\log T(1+\phi(1))\\&\geq&\int_{1}^{r}{\rm d}(\log T(t+\phi(t)))\\
&=& \int_{1}^{r}\frac{T'(t+\phi(t))}{T(t+\phi(t))}{\rm d}(t+\phi(t))\\
&=&\int_{1}^{r}\frac{T'(t+\phi(t))}{T(t+\phi(t))}{\rm d}t
+\int_{1}^{r}\frac{T'(t+\phi(t))}{T(t+\phi(t))}{\rm d}\phi(t)\\
&\geq& \int_{1}^{r}\frac{T'(t+\phi(t))}{T(t+\phi(t))}{\rm d}t\\
&\geq& \int_{E\cap [1, r)}\tau(t){\rm d}t\\
&\geq& \tau(r)\int_{E\cap [1,r]}{\rm d}t.
\end{eqnarray*} Hence for $r=r_n$,
\begin{eqnarray*}
\frac{1}{r}\int_{E\cap [1,r]}dt&\leq&\frac{1}{\hat{\tau}(r_{n})}\frac{\log T(r_{n}+\phi(r_{n}))-\log T(1+\phi(1))}{r_{n}}\\
&\leq&\sqrt{\frac{\log T(r_{n}+\phi(r_{n}))-\log T(1+\phi(1))}{r_{n}}},
\end{eqnarray*}
and thus
\begin{eqnarray*}\underline{dens}E=\liminf_{r\rightarrow\infty}\frac{1}{r}\int_{E\cap [1,r]}dt=0.\end{eqnarray*}
Therefore, for all $r\not\in E,$ we have
\begin{eqnarray*}
T(r+\phi(r))&\leq&T(r)+\tau(r)\phi(r)T(r+\phi(r))
\end{eqnarray*} which implies
\begin{equation*}T(r+\phi(r))\leq (1+\varepsilon(r))T(r)\end{equation*}where
$\varepsilon(r):=\frac{\tau(r)\phi(r)}{1-\tau(r)\phi(r)}.$ Since $\delta\in(0, \frac{1}{2}),$
we get that $\tau(r)\phi(r)\rightarrow 0,$ and thus $\varepsilon(r)\rightarrow 0,$ as $r\rightarrow\infty.$\par

By noting that $\phi(r)\to\infty$ as $r\to\infty$, we have, for a
fixed positive real number $c$, $T(r+c)\leq T(r+\phi(r))$. Therefore
we obtain the desired result.

Obviously, if the growth assumption is, instead, $
\limsup_{r\rightarrow\infty} \frac{\log T(r)}{r}=0,$ then
the above sequence $\{r_{n}\}$ can be arbitrarily chosen, and thus
the exceptional set $E$ is a set with zero upper density
measure.\par
\end{proof}

\begin{proof}[Proof of Theorem \ref{L1}.] By \cite[Theorem 3.24]{korhonen-laine-tohge},
for all $\alpha(>1)$ and all $r>0,$ we have
\begin{eqnarray*}
m(r, f(x+c)\oslash f(x))\leq\frac{16|c|}{r+|c|}\frac{1}{\alpha-1} T_{f}(\alpha(r+|c|))+\frac{|f(0)|}{2}.
\end{eqnarray*}  Let \begin{equation*}
\alpha:=1+\frac{(r+|c|)^{\delta-1}}{(\log T_{f}(r+|c|))^{\delta}}, \,\,\, \delta(0, \frac{1}{2}).
\end{equation*}
Under the assumption \eqref{E-8.0}, we get that
\begin{eqnarray*}
\frac{1}{(\alpha-1)(r+|c|)}=\left(\frac{\log
T_{f}(r+|c|)}{r+|c|}\right)^{\delta}=o(1) (r\to\infty).
\end{eqnarray*} Note that \begin{eqnarray*}
\alpha(r+|c|)=(r+|c|)+\left(\frac{r+|c|}{\log
T_{f}(r+|c|)}\right)^{\delta}.\end{eqnarray*} Take $\phi(r)=\max_{1\leq t\leq r}\{\left(\frac{t}{T_{f}(t)}\right)^{\delta}\}$  in
Lemma \ref{L2} and so
$\left(\frac{r+|c|}{T_{f}(r+|c|)}\right)^{\delta}\leq\phi(r+|c|)$.
We get that
$$T_{f}(\alpha(r+|c|))\leq(1+\varepsilon(r+|c|))T_{f}(r+|c|)\leq(1+\varepsilon(r+|c|))(1+\varepsilon(r))T_{f}(r)$$
holds for all $r$ with $r\not\in E$ where $\overline{dens}E=0.$
Therefore, we have
\begin{eqnarray*}
m(r, f(x+c)\oslash f(x))&\leq& 16|c|o(1) (1+\varepsilon(r+|c|))(1+\varepsilon(r)) T_{f}(r) +\frac{|f(0)|}{2}\\
&=&o(T_{f}(r))
\end{eqnarray*} holds for all $r\not\in E$ where $\overline{dens}E=0.$
\end{proof}

In terms of the Hinkkanen's Borel type growth lemma, in another way,
we establish the following

\begin{theorem}\label{C2} Let $c\in\mathbb{R}\setminus\{0\}.$
If $f$ is tropical meromorphic function on $\mathbb{R}$ with \begin{equation}\label{E-8.0'}
\limsup_{r\rightarrow\infty} \frac{\log T_{f}(r)(\log r)^{\varepsilon}}{r}=0
\end{equation} holds for any $\varepsilon>0,$ then
\begin{equation*}
m(r, f(x+c)\oslash f(x))=o\left(T_{f}(r)\right)
\end{equation*} where $r$ runs to infinity outside of a set $E$ satisfying
\begin{eqnarray*} \int_{E}\frac{dt}{t\log t}<+\infty.\end{eqnarray*}
\end{theorem}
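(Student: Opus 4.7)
The plan is to follow the template of the proof of Theorem \ref{L1} almost verbatim, but replace Lemma \ref{L2} by a Hinkkanen-type Borel growth lemma, so that the exceptional set is controlled by $\int_E dt/(t\log t)<\infty$ instead of having zero upper density. First I would invoke the pointwise estimate from \cite[Theorem 3.24]{korhonen-laine-tohge}: for all $\alpha>1$ and $r>0$,
$$m(r,f(x+c)\oslash f(x))\leq \frac{16|c|}{r+|c|}\cdot\frac{1}{\alpha-1}\,T_f(\alpha(r+|c|))+\frac{|f(0)|}{2}.$$
This reduces the task to (i) choosing $\alpha$ so that the prefactor $1/((\alpha-1)(r+|c|))$ is $o(1)$ and (ii) bounding $T_f(\alpha(r+|c|))$ by $O(T_f(r+|c|))$ outside a thin exceptional set.

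Second, I would calibrate $\alpha$ to the stronger hypothesis \eqref{E-8.0'}. A natural choice is
$$\alpha-1:=\frac{(r+|c|)^{\delta-1}\bigl(\log(r+|c|)\bigr)^{\delta\varepsilon}}{\bigl(\log T_f(r+|c|)\bigr)^{\delta}},\qquad \delta\in(0,\tfrac12),$$
for some fixed small $\varepsilon>0$. Then the hypothesis $\log T_f(r)(\log r)^{\varepsilon}=o(r)$ gives
$$\frac{1}{(\alpha-1)(r+|c|)}=\left(\frac{\log T_f(r+|c|)}{(r+|c|)\bigl(\log(r+|c|)\bigr)^{\varepsilon}}\right)^{\delta}=o(1),$$
while the increment $\alpha(r+|c|)-(r+|c|)=\bigl((r+|c|)/\log T_f(r+|c|)\bigr)^{\delta}(\log(r+|c|))^{\delta\varepsilon}$ is exactly the size on which Hinkkanen's lemma is effective.

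Third, the heart of the argument is a Borel-type growth lemma of Hinkkanen flavour, whose statement I would establish in parallel with Lemma \ref{L2}: for a nondecreasing positive convex continuous function $T$ on $[1,\infty)$,
$$T\!\left(r+\bigl(r/\log T(r)\bigr)^{\delta}(\log r)^{\delta\varepsilon}\right)\leq 2\,T(r)$$
for all $r$ outside a set $E$ with $\int_E dt/(t\log t)<\infty$. The proof imitates Lemma \ref{L2} — convexity yields $T(R)\leq T(r)+T'(R)(R-r)$, and a Riemann–Stieltjes integration of $T'(t+\phi(t))/T(t+\phi(t))$ against the measure $dt/(t\log t)$ replaces the Lebesgue measure argument, forcing the exceptional set into the logarithmic-measure regime. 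Combining this with the pointwise bound gives
$$m(r,f(x+c)\oslash f(x))\leq 16|c|\cdot o(1)\cdot 2\,T_f(r+|c|)+\tfrac{|f(0)|}{2}=o\bigl(T_f(r)\bigr)$$
for $r\notin E$, using also that $T_f(r+|c|)\leq(1+o(1))T_f(r)$ outside a comparable exceptional set.

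The main obstacle is the precise calibration of $\alpha$ so that (i) and (ii) are simultaneously compatible under the assumption \eqref{E-8.0'}: the auxiliary factor $(\log(r+|c|))^{\delta\varepsilon}$ in $\alpha-1$ is precisely what converts the $(\log r)^{\varepsilon}$ strengthening of the growth hypothesis into the smaller exceptional set $\int_E dt/(t\log t)<\infty$. A secondary technical point is to formulate and verify the Hinkkanen-type lemma in the convex continuous setting and to check that the induced exceptional sets of $T_f$ at $r+|c|$ and at $\alpha(r+|c|)$ merge into a single $E$ with the same logarithmic-measure control.
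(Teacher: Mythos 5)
Your overall strategy---the pointwise estimate from \cite[Theorem 3.24]{korhonen-laine-tohge} followed by a Borel--Hinkkanen type growth lemma calibrated to the hypothesis---matches the paper's, and your verification that $1/((\alpha-1)(r+|c|))=o(1)$ for your choice of $\alpha$ is correct. The genuine gap is the third step: the ``Hinkkanen-type lemma'' you rely on, namely
\[
T\!\left(r+\left(\tfrac{r}{\log T(r)}\right)^{\delta}(\log r)^{\delta\varepsilon}\right)\leq 2\,T(r)\quad\text{for } r\notin E,\qquad \int_{E}\frac{dt}{t\log t}<\infty,
\]
is not proved, and the argument you sketch for it---imitating Lemma \ref{L2} by integrating $T'(t+\phi(t))/T(t+\phi(t))$ ``against the measure $dt/(t\log t)$''---cannot deliver a finite-measure conclusion. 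The mechanism of Lemma \ref{L2} only produces a bound of the form $\int_{E\cap[1,r]}\tau(t)\,dt\leq\log T(r+\phi(r))+O(1)$, whose right-hand side is unbounded in $r$; dividing by a normalizing factor then yields a \emph{density} statement, never the convergence of $\int_{E}dt/(t\log t)$. The finite-logarithmic-measure conclusion in Borel/Hinkkanen lemmas comes from a different mechanism: an iteration $u(r_{n+1})\geq Cu(r_{n})$ combined with the summability $\sum_{n}1/h(C^{n}u(r_{1}))<\infty$, which structurally requires the increment to have the form $p(r)/h(u(r))$ with $\int^{\infty}dy/(y\,h(y))<\infty$. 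Your increment, rewritten as $(r\log r)/h^{*}(r)$, gives $h^{*}(r)=r^{1-\delta}(\log r)^{1-\delta\varepsilon}(\log T(r))^{\delta}$, which is not a function of $T(r)$ alone, so Lemma \ref{L3} does not apply to it as stated and the needed summability is not evident.

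The paper avoids this problem by reverse-engineering $\alpha$ from Lemma \ref{L3}: taking $p(r)=r\log r$ and $\varphi(y)=y\log y(\log\log y)^{1+\varepsilon}$, it sets
\[
\alpha-1=\frac{\log(r+|c|)}{\log T_{f}(r+|c|)\,\bigl(\log\log T_{f}(r+|c|)\bigr)^{1+\varepsilon}},
\]
so that $T_{f}(\alpha(r+|c|))\leq C\,T_{f}(r+|c|)$ outside a set of finite logarithmic measure is an immediate application of Lemma \ref{L3}, while
\[
\frac{1}{(\alpha-1)(r+|c|)}=\frac{\log T_{f}(r+|c|)\,(\log(r+|c|))^{\varepsilon}}{r+|c|}\left(\frac{\log\log T_{f}(r+|c|)}{\log(r+|c|)}\right)^{1+\varepsilon}=o(1)\cdot O(1)
\]
under \eqref{E-8.0'}; the comparison $T_{f}(r+|c|)\leq C\,T_{f}(r)$ is then obtained by checking that the Hinkkanen increment eventually exceeds $|c|$. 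To repair your proposal you should either adopt this choice of $\alpha$, or prove your growth lemma by the iteration argument after recasting your increment in the admissible form $p(r)/h(T(r))$---simply transplanting the convexity-plus-integration proof of Lemma \ref{L2} will not close the gap.
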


The next lemma is the Hinkkanen's Borel type growth lemma (or see also a similar lemma
\cite[Lemma 3.3.1]{cherry-ye}.\par

\begin{lemma}\cite[Lemma 4]{hinkkanen} \label{L3} Let $p(r)$ and $h(r)=\varphi(r)/r$ be
positive nondecreasing functions defined for $r\geq\varrho>0$ and
$r\geq\tau>0,$ respectively, such that
$\int_{\varrho}^{\infty}\frac{dr}{p(r)}=\infty$ and
$\int_{\tau}^{\infty}\frac{dr}{\varphi(r)}<\infty.$ Let $u(r)$ be a
positive nondecreasing function defined for $r\geq r_{0}\geq\varrho$
such that $u(r)\rightarrow\infty$ as $r\rightarrow\infty.$ Then if
$C$ is real with $C>1,$ we have
$$u\left(r+\frac{p(r)}{h(u(r))}\right)<Cu(r)$$ whenever $r\geq r_{0},$
$u(r)>\tau,$ and $r\not\in E$ where
$$\int_{E}\frac{dr}{p(r)}\leq\frac{1}{h(w)}+\frac{C}{C-1}\int_{w}^{\infty}\frac{dr}{\varphi(r)}<\infty$$
and $w=\max\{\tau, u(r_{0})\}.$
\end{lemma}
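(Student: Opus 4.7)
The plan is to prove this Hinkkanen-type growth lemma by a maximal-interval selection argument combined with a telescoping geometric estimate. The strategy is classical for Borel-type lemmas: the exceptional set where $u$ grows too fast must be thin because the forced multiplicative jump $u \mapsto Cu$ can only be triggered finitely often on each geometric scale of $u$, and those scales are controlled by the convergent integral $\int_\tau^\infty dr/\varphi(r)$.

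First, I would define the exceptional set
$$E := \{r \geq r_0 : u(r) > \tau \text{ and } u(r + p(r)/h(u(r))) \geq Cu(r)\}$$
and construct a (possibly finite) increasing sequence $r_1 < r_2 < \cdots$ inside $E$ by setting $r_1 := \inf E$, $s_n := r_n + p(r_n)/h(u(r_n))$, and $r_{n+1} := \inf(E \cap [s_n, \infty))$. By the infimum choice, $E$ contains no point of the open gap $(s_n, r_{n+1})$, so $E \subset \bigcup_n [r_n, s_n]$. Writing $v_n := u(r_n)$, the condition $r_n \in E$ combined with monotonicity of $u$ yields $v_{n+1} \geq u(s_n) \geq Cv_n$, so $v_n \geq C^{n-1} v_1 \to \infty$; this rules out any finite accumulation point of the $r_n$, and also ensures $v_1 \geq w := \max\{\tau, u(r_0)\}$ since $r_1 \geq r_0$ and $u(r_1) > \tau$.

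The key piecewise bound uses monotonicity of $p$: since $p(r) \geq p(r_n)$ on $[r_n, s_n]$,
$$\int_{r_n}^{s_n} \frac{dr}{p(r)} \leq \frac{s_n - r_n}{p(r_n)} = \frac{1}{h(v_n)}.$$
Because $\varphi(r) = r\, h(r)$ is a product of positive nondecreasing functions, $\varphi$ itself is nondecreasing, and for $n \geq 2$ one can compare
$$\int_{v_{n-1}}^{v_n} \frac{dr}{\varphi(r)} \geq \frac{v_n - v_{n-1}}{\varphi(v_n)} = \frac{1}{h(v_n)} \cdot \frac{v_n - v_{n-1}}{v_n} \geq \frac{C-1}{C}\cdot \frac{1}{h(v_n)},$$
using $v_n - v_{n-1} \geq (1 - 1/C)v_n$. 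Telescoping over $n \geq 2$ yields $\sum_{n \geq 2} 1/h(v_n) \leq \frac{C}{C-1}\int_{v_1}^{\infty} dr/\varphi(r)$, while $1/h(v_1) \leq 1/h(w)$ since $v_1 \geq w$ and $h$ is nondecreasing.

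Combining these bounds gives
$$\int_E \frac{dr}{p(r)} \leq \sum_n \int_{r_n}^{s_n} \frac{dr}{p(r)} \leq \sum_n \frac{1}{h(v_n)} \leq \frac{1}{h(w)} + \frac{C}{C-1}\int_w^{\infty}\frac{dr}{\varphi(r)},$$
which is finite by the standing hypothesis. The only real subtlety, which I would write out with care, is verifying that the maximal selection actually exhausts $E$ (so the $[r_n, s_n]$ indeed cover it) and that $r_n \to \infty$; both reduce to the forced geometric growth $v_n \geq C^{n-1} v_1$, so the main obstacle is bookkeeping rather than any delicate analytic estimate.
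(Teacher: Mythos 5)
Your argument is correct, but note that the paper does not prove this statement at all: Lemma \ref{L3} is imported verbatim as \cite[Lemma 4]{hinkkanen}, so there is no in-paper proof to compare against. Your maximal-interval selection with the telescoping bound $\sum_{n\ge 2} 1/h(v_n)\le \frac{C}{C-1}\int_{v_1}^{\infty} dr/\varphi(r)$ is exactly the classical Borel-type argument and is essentially Hinkkanen's own proof. The one point you should tighten is not the exhaustion of $E$ (which you flag) but the fact that $r_n:=\inf(E\cap[s_{n-1},\infty))$ need not belong to $E$, in which case $u(s_n)\ge Cu(r_n)$ --- and hence the geometric growth $v_{n+1}\ge Cv_n$ on which the telescoping rests --- is not guaranteed; the standard repair is to pick $r_n\in E$ within $\delta 2^{-n}$ of that infimum, absorb the uncovered slivers of $E$ into an extra $O(\delta)$ term in $\int_E dr/p(r)$, and let $\delta\to 0$.
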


Now we give the proof of Theorem \ref{C2}.\par

\begin{proof}[Proof of Theorem \ref{C2}.] In Lemma \ref{L3},
take \begin{equation*}u(r)=T_{f}(r),\ p(r)=r\log r,\
h(r)=\frac{\varphi(r)}{r},\end{equation*} where $$\varphi(r)=r\log r
(\log\log  r)^{1+\varepsilon}$$ with the $\varepsilon>0$ given by
the assumption of theorem. Then it is obvious that
$\int_{\varrho}^{\infty}\frac{dr}{p(r)}=\infty$ and
$\int_{\tau}^{\infty}\frac{dr}{\varphi(r)}<\infty$ for
$r\geq\varrho>0$ and $r\geq\tau>0.$ Let \begin{equation*}
\alpha:=1+\frac{p(r+|c|)}{(r+|c|)h(T_{f}(r+|c|))}=1+\frac{\log
(r+|c|)}{\log T_{f}(r+|c|)(\log\log T_{f}(r+|c|))^{1+\varepsilon}}.
\end{equation*}
Note that $$T_{f}(\alpha(r+|c|))=T_{f}\left(r+|c|+\frac{(r+|c|)\log
(r+|c|)}{\log T_{f}(r+|c|)(\log\log
T_{f}(r+|c|))^{1+\varepsilon}}\right).$$ Applying Lemma \ref{L3}, we
have
\begin{equation}\label{E-8.2}
T_{f}(\alpha(r+|c|))\leq C T_{f}(r+|c|)
\end{equation} for all $r$ possibly outside a set $E$ satisfying
\begin{eqnarray*}&\ &
\int_{E}\frac{dt}{p(t)}=\int_{E}\frac{dt}{t\log
t}\\&\leq&\frac{1}{\log w(\log \log
w)^{1+\varepsilon}}+\frac{C}{C-1}\int_{w}^{\infty}\frac{dt}{t \log
t(\log\log t)^{1+\varepsilon}}\\&<&+\infty.\end{eqnarray*}

By \cite[Theorem 3.24]{korhonen-laine-tohge}, for all $\alpha(>1)$ and all $r>0,$ we have
\begin{eqnarray*}
m(r, f(x+c)\oslash f(x))\leq\frac{16|c|}{r+|c|}\frac{1}{\alpha-1} T_{f}(\alpha(r+|c|))+\frac{|f(0)|}{2}.
\end{eqnarray*}
Thus we get from \eqref{E-8.2} that for the above defined $\alpha,$
\begin{eqnarray}\label{E-8.1}&&
m(r, f(x+c)\oslash f(x))\\\nonumber&\leq& 16|c|\frac{\log T_{f}(r+|c|)(\log (r+|c|))^{\varepsilon}}{r+|c|}
\left(\frac{\log\log T_{f}(r+|c|)}{\log (r+|c|)}\right)^{1+\varepsilon}
T_{f}(\alpha(r+|c|))\\\nonumber&&+\frac{|f(0)|}{2}
\\\nonumber&\leq& 16|c|\frac{\log T_{f}(r+|c|)(\log (r+|c|))^{\varepsilon}}{r+|c|}
\left(\frac{\log\log T_{f}(r+|c|)}{\log (r+|c|)}\right)^{1+\varepsilon} C T_{f}(r+|c|)\\\nonumber&&+\frac{|f(0)|}{2}
\end{eqnarray} holds for $r\not\in E.$\par

Under the condition \eqref{E-8.0'} (it implies $\rho_{2}(f)\leq 1$
according to Remark \ref{R0}(i)), we get
\begin{eqnarray}\label{E-8.3}
\frac{\log T_{f}(r+|c|)(\log (r+|c|))^{\varepsilon}}{r+|c|}=o(1)\
(r\to\infty),
\end{eqnarray}and
\begin{eqnarray}\label{E-8.5}
\frac{\log\log T_{f}(r+|c|)}{\log (r+|c|)}=O(1)\
(r\to\infty).\end{eqnarray}
 \par

Furthermore, by \eqref{E-8.2}, we also have
$$T_{f}\left(r+\frac{r\log r}{\log T_{f}(r)(\log\log T_{f}(r))^{1+\varepsilon}}\right)\leq C T_{f}(r)$$
for all $r\not\in E_c=\{x:\ x-|c|\in E\}$. It follows also from the
condition \eqref{E-8.0'} that for all sufficiently large $r,$
\begin{eqnarray*}
\frac{\log T_{f}(r)(\log
r)^{\varepsilon}}{r}<2^{-1-\varepsilon}|c|^{-1},\,\,\,\,\,\,\,
\frac{\log\log T_{f}(r)}{\log r}\leq 2.\end{eqnarray*} Hence, for
all sufficiently large $r,$
\begin{equation*}\frac{r\log r}{\log T_{f}(r)(\log\log T_{f}(r))^{1+\varepsilon}}
=\frac{r}{\log T_{f}(r)(\log r)^{\varepsilon}}\left(\frac{\log
r}{\log\log T_{f}(r)}\right)^{1+\varepsilon}>|c|.\end{equation*}
Hence, we have
\begin{eqnarray}\label{E-8.4}T_{f}(r+|c|)\leq
T_{f}\left(r+\frac{r\log r}{\log T_{f}(r)(\log\log
T_{f}(r))^{1+\varepsilon}}\right)\leq C T_{f}(r)\end{eqnarray} for
all $r\not\in E_c.$ Therefore, substituting \eqref{E-8.3},
\eqref{E-8.5} and \eqref{E-8.4} into \eqref{E-8.1} we have
\begin{eqnarray*}
m(r, f(x+c)\oslash f(x))
&\leq& 16 |c| o(1)O(1)C^{2} T_{f}(r) +\frac{|f(0)|}{2}\\
&=&o(T_{f}(r))
\end{eqnarray*} holds for all $r\not\in E\cup E_c$ where $\int_{E\cup E_c}\frac{dt}{t\log t}<+\infty.$
\end{proof}

\begin{remark} We do not know how to improve the condition \eqref{E-8.0'} by \eqref{E-8.0} in Theorem \ref{C2} whenever making use of Lemma \ref{L3}. The difficulty we met is how to give well defined functions $p(r)$ and $\varphi(r)$ when applying Lemma \ref{L3}.
\end{remark}

\section{Second main theorem with tropical hypersurfaces and defect relation}

In this section we mainly obtain a second main theorem with tropical hypersurfaces and give a tropical version of the Shiffman's conjecture on defect relation. From which, the tropical version of Cartan-Nocha's second main due to Korhonen and Tohge \cite{korhonen-tohge-2016} is improved and extended. Recall that in 2016, Korhonen and Tohge obtained a  second main theorem for tropical holomorphic curves with essentially tropical hyperplanes, which is an analogue of the Cartan second main theorem in Classical Nevanlinna theory \cite{cartan}. The tropical holomorphic curve is assumed with growth condition of hyperorder strictly less than one.\par

\begin{theorem}\cite{korhonen-tohge-2016}\label{T0} Let $q$ and $n$ be positive integers with $q>n,$ and let $\epsilon>0.$ Given $n+1$ tropical entire functions $g_{0}, \ldots, g_{n}$ without common roots, and linearly independent in Gondran-Minoux sense, let $q+1$ tropical linear combinations $f_{0}, \ldots, f_{q}$ of the $g_{j}$ over the semiring $\mathbb{R}_{\max}$ be defined by
$$f_{k}(x)=a_{0k}\odot g_{0}(x)\oplus a_{1k}\odot g_{1}(x)\oplus \cdots\oplus a_{nk}\odot g_{n}(x), 0\leq k\leq q.$$
Let $\lambda=ddg (\{f_{n+1}, \ldots, f_{q}\})$ and $$L=\frac{f_{0}\odot f_{1}\odot\cdots\odot f_{n}\odot f_{n+1}\odot\cdots\odot f_{q}}{C_{o}(f_{0}, f_{1}, \ldots, f_{n})}\oslash.$$
 If the tropical holomorphic curve $g$ of $\mathbb{R}$ into $\mathbb{TP}^{n}$ with reduced representation $g=(g_{0}, \ldots, g_{n})$ is of hyper-order $\rho_{2}(g)=\rho_{2}<1,$ then $$(q-n-\lambda)T_{g}(r)\leq N(r, 1_{o}\oslash L)-N(r, L)+o\left(\frac{T_{g}(r)}{r^{1-\rho_{2}-\epsilon}}\right),$$
where $r$ approaches infinity outside an exceptional set of finite logarithmic measure.
\end{theorem}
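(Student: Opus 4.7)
The plan is to apply the tropical Jensen formula \eqref{E1.0} to $L$, decompose the right-hand side according to the tropical multiplicative definition $L=(f_{0}\odot f_{1}\odot\cdots\odot f_{q})\oslash C_{o}(f_{0},\dots,f_{n})$, and estimate each piece separately. Since every $f_{k}$ and the Casoratian $C_{o}(f_{0},\dots,f_{n})$ is tropical entire and therefore pole-free, applying \eqref{E1.0} simultaneously to $L$, to each $f_{k}$ and to $C_{o}$ collapses $N(r,L)$ and $N(r,C_{o})$ terms and yields
\[
N(r,1_{o}\oslash L)-N(r,L)\;=\;\sum_{k=0}^{q}N(r,1_{o}\oslash f_{k})\;-\;N(r,1_{o}\oslash C_{o}).
\]
The theorem thus reduces to a lower bound on $\sum_{k}N(r,1_{o}\oslash f_{k})$ and a matching upper bound on $N(r,1_{o}\oslash C_{o})$.

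For each $f_{k}$ I would exploit the dichotomy between complete and non-complete tropical linear combinations. Because $\{g_{0},\dots,g_{n}\}$ is Gondran--Minoux independent, the representation $f_{k}=\bigoplus_{j}a_{jk}\odot g_{j}$ is unique, so $f_{k}$ is complete precisely when every $a_{jk}\in\mathbb{R}$; in that case $f_{k}(x)=\max_{j}(a_{jk}+g_{j}(x))=\|g(x)\|+O(1)$, whence $N(r,1_{o}\oslash f_{k})=T_{g}(r)+O(1)$ by \eqref{E1.0}. For non-complete $f_{k}$ only the trivial bound $N(r,1_{o}\oslash f_{k})\geq 0$ is needed. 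Non-degeneracy of the Casoratian forces $f_{0},\dots,f_{n}$ themselves to be complete, so among $\{f_{n+1},\dots,f_{q}\}$ exactly $\lambda=ddg(\{f_{n+1},\dots,f_{q}\})$ are non-complete and $q-n-\lambda$ are complete, producing
\[
\sum_{k=0}^{q}N(r,1_{o}\oslash f_{k})\;\geq\;(q+1-\lambda)\,T_{g}(r)+O(1).
\]

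For the Casoratian, write in additive notation $C_{o}(x)=\sum_{j=0}^{n}f_{j}(x)+G(x)$ where $G(x)=\max_{\pi}\sum_{j=0}^{n}[f_{j}(x+\pi(j)c)-f_{j}(x)]$. Applying \eqref{E1.0} to $C_{o}$ and to the $f_{j}$ individually gives
\[
N(r,1_{o}\oslash C_{o})\;=\;\sum_{j=0}^{n}N(r,1_{o}\oslash f_{j})\;+\;\Bigl(\tfrac{1}{2}\!\sum_{\sigma=\pm1}G(\sigma r)-G(0)\Bigr).
\]
Since $G(x)\leq\sum_{j,k}[f_{j}(x+kc)-f_{j}(x)]^{+}$ (the permutation-maximum passes through positive parts), its symmetric average at $\pm r$ is bounded by $\sum_{j,k}m(r,f_{j}(x+kc)\oslash f_{j}(x))+O(1)$. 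The tropical logarithmic derivative lemma of Laine--Tohge for hyperorder strictly less than one bounds each such proximity by $o(T_{g}(r)/r^{1-\rho_{2}-\epsilon})$ outside a set of finite logarithmic measure. Combined with $N(r,1_{o}\oslash f_{j})=T_{g}(r)+O(1)$ for the complete $f_{0},\dots,f_{n}$, this gives $N(r,1_{o}\oslash C_{o})\leq(n+1)T_{g}(r)+o(T_{g}(r)/r^{1-\rho_{2}-\epsilon})$; subtracting from the previous lower bound yields the stated inequality.

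The hardest step is the Casoratian estimate, for two reasons. First, one must pass the permutation-maximum inside the tropical proximity functional cleanly so that only the scalar shift-quotient proximities $m(r,f_{j}(x+kc)\oslash f_{j}(x))$ survive, which requires the subadditivity of the positive part together with the linearity of the symmetric sum. Second, extracting the sharp rate $o(T_{g}(r)/r^{1-\rho_{2}-\epsilon})$ with an exceptional set of finite logarithmic measure demands a Borel-type growth lemma adapted to the hyperorder $\rho_{2}<1$ hypothesis, which is finer than the zero-density version available from Lemma \ref{L2} under the weaker growth assumption used elsewhere in the paper.
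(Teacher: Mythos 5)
Your route is sound in outline and is genuinely more direct than the one this paper takes (the paper does not reprove Theorem \ref{T0} itself: it quotes it from Korhonen--Tohge and then recovers it as the $d=1$ case of Theorem \ref{T1}, whose proof occupies Section 5). The shared ingredients are the tropical Jensen formula, the completeness lower bound, and the logarithmic derivative lemma; the difference is organizational. The paper isolates the product $\psi=f_{n+1}\odot\cdots\odot f_{q}$ of the last $q-n$ combinations, factors it as $\tilde L\odot K$ with a shifted variant $\tilde L$ of $L$, bounds $m(r,K)$ by the logarithmic derivative lemma, and then must separately compare $N(r+jc,1_{o}\oslash f_{j})$ with $N(r,1_{o}\oslash f_{j})$ (its estimate \eqref{E1}) in order to return from $\tilde L$ to $L$; moreover it only ever bounds $N(r,1_{o}\oslash C_{o})$ from \emph{below}, to pass to the final form of the inequality. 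You instead apply Jensen once to $L$ and once to the Casoratian, absorb every shift into the single term $G$, and bound $N(r,1_{o}\oslash C_{o})$ from \emph{above}; this removes the $\tilde L$ detour and the counting-function shift estimates entirely. Your identification of the Casoratian estimate as the delicate step is accurate, and the Laine--Tohge logarithmic derivative lemma for hyperorder $\rho_{2}<1$ does deliver exactly the rate $o(T_{g}(r)/r^{1-\rho_{2}-\epsilon})$ outside a set of finite logarithmic measure that you invoke.

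The one step that does not stand as written is the claim that ``non-degeneracy of the Casoratian forces $f_{0},\dots,f_{n}$ to be complete.'' No hypothesis of the theorem guarantees this, and the tropical Casoratian of real-valued entire functions is automatically real-valued (never $0_{o}$), so its non-vanishing carries no completeness information. If $\mu>0$ of the functions $f_{0},\dots,f_{n}$ fail to be complete, your lower bound degrades to $(q+1-\lambda-\mu)T_{g}(r)$ while your upper bound on $N(r,1_{o}\oslash C_{o})$ remains $(n+1)T_{g}(r)$, and the subtraction loses $\mu T_{g}(r)$. The repair is already implicit in your own decomposition: the sum $\sum_{j=0}^{n}N(r,1_{o}\oslash f_{j})$ appears identically inside $\sum_{k=0}^{q}N(r,1_{o}\oslash f_{k})$ and inside your expression for $N(r,1_{o}\oslash C_{o})$, so it cancels before any completeness assumption is invoked, leaving $N(r,1_{o}\oslash L)-N(r,L)=\sum_{k=n+1}^{q}N(r,1_{o}\oslash f_{k})-\bigl(\tfrac{1}{2}\sum_{\sigma=\pm1}G(\sigma r)-G(0)\bigr)$; then only the $q-n-\lambda$ complete elements among $f_{n+1},\dots,f_{q}$ need contribute $T_{g}(r)+O(1)$ each, the non-complete ones being discarded via convexity exactly as you do. With that rearrangement the argument closes.
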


Consider a homogeneous tropical polynomial with degree $d$ in $n$ dimensional tropical projective space $\mathbb{TP}^{n}$ of the form
\begin{eqnarray*} P(x)&=&\bigoplus_{I_{i}\in\mathcal{J}_{d}}c_{I{i}}\odot x^{ I_{i}}\\&=&\bigoplus_{i_{0}+i_{1}+\ldots+i_{n}=d} c_{i_{0}, i_{1}, \ldots, i_{n}}\odot x_{0}^{\odot i_0}\odot x_{1}^{\odot i_1}\cdots \odot x_{n}^{\odot i_n},
\end{eqnarray*}where $\mathcal{J}_{d}$ is the set of all $I_{i}=(i_{0}, i_{1}, \ldots, i_{n})\in\mathbb{N}_{0}^{n+1}$ with $\# I_{i}=i_{0}+i_1+\ldots+i_{n}=d.$
The (homogeneous) \textbf{tropical hypersurface} $V_{P}$ in $\mathbb{TP}^{n}$ is the set of zeros (roots) $x=(x_{0}, x_{1}, \ldots, x_{n})$ of $P(x),$  that is, the graph of $P$ is nonlinear at these points (corner locus). In particular, $V_{P}$ is called a tropical hyperplane whenever $d=1.$ For more general definition of tropical hypersurfaces associated to a tropical Laurent polynomial, please see \cite[Definition 3.6]{mikhalkin}. It is shown that $V_{P}$ is the set of points where more than one monomial of $P$ reaches its  maximal value \cite[Proposition 3.3]{mikhalkin}.\par

Set $M:=(_d^{n+d})-1.$  For any $I_{i}=(i_{0}, \ldots, i_{n})\in\mathcal{J}_d,$ $i\in\{0,1,\ldots, M\},$ denote $f^{I_{i}}:=f_0^{\odot i_{0}}\odot\cdots \odot f_n^{\odot i_{n}}.$ Then one can see that the composition function \begin{eqnarray*}P(f)&:=&P\circ f=\bigoplus_{i_{0}+i_{1}+\ldots+i_{n}=d} c_{i_{0}, i_{1}, \ldots, i_{n}}\odot f_{0}^{\odot i_0}\odot f_{1}^{\odot i_1}\cdots f_{n}^{\odot i_n}\\
&=&\bigoplus_{i=0}^{M} c_{I_{i}}\odot f^{I_{i}}\end{eqnarray*} for a tropical holomorphic curve $f:=[f_{0}, \ldots, f_{n}]: \mathbb{R}\rightarrow \mathbb{TP}^{n}$ and tropical hypersurface $V_{P}$ is a tropical algebraical combination of $f_{0}, \ldots, f_{n}$ in the Gondran-Minoux sense. From which, we may also regard $P\circ f$ as a tropical linear combination of $f^{I_{0}}, f^{I_{1}}, \ldots, f^{I_{M}}$  in the Gondran-Minoux sense. From this viewpoint, we introduce some definitions similarly as in Section 2.\par

\begin{definition}\label{D1} Tropical meromorphic functions $f_{0}, \ldots, f_{n}$ are algebraically dependent (respectively independent) in the Gondran-Minoux sense if and only if  $f^{I_{0}}, \ldots, f^{I_{M}}$ are linearly dependent (respectively independent) in the Gondran-Minoux sense. \end{definition}

\begin{definition} Let $G=\{f_{0}, \ldots, f_{n}\}(\neq \{0_{o}\})$ be a set of tropical entire functions, algebraically independent in the Gondran-Minoux sense, and denote \begin{equation*}\mathcal{\hat{L}}_{G}=span<f^{I_{0}}, \ldots, f^{I_{M}}>=\left\{\bigoplus_{k=0}^{M}a_{k}\odot f^{I_{k}}: (a_{0}, \ldots, a_{M})\in\mathbb{R}^{M+1}_{\max}\right\}
\end{equation*} to  be their algebraic span. The collection $G$ is called the algebraic spanning basis of $\mathcal{\hat{L}}_{G}.$ The dimension of $\mathcal{\hat{L}}_{G}$ is defined by
$$\dim(\mathcal{\hat{L}}_{G})=\max\{\hat{\ell}(F): F\in \mathcal{\hat{L}}_{G}\setminus\{0_{o}\}\},$$ where $\hat{\ell}(F)$ is the shortest length of the representation of $F\in \mathcal{\hat{L}}_{G}\setminus\{0_{o}\}$ defined by $$\hat{\ell}(F)=\min\{j\in\{1,\ldots, M+1\}: F=\bigoplus_{i=1}^{j}a_{k_i}\odot f^{I_{k_i}}\}$$ where $a_{k_i}\in\mathbb{R}$ with integers $0\leq k_1<k_2<\cdots <k_j\leq M.$\end{definition}

Note that usually the dimension of the tropical algebraic span space of $G$ may not be M+1, which is different from the classical linear algebraic. If $\hat{\ell}(F)=M+1$ for a tropical algebraic combination $F$ of $f_{0}, \ldots, f_{n},$ then $F$ is said to be complete, that is, the coefficients $a_{k}$ in any expression of $F$ of the form $F=\bigoplus_{k=0}^{M}a_{k}\odot f_{k}$ must satisfy $a_{k}\in \mathbb{R}$ for all $k\in\{0, \ldots, M\}$ such that $\mathcal{\hat{L}}_{G}=M+1.$\par

Furthermore, the tropical Casorati determinant $\tilde{C}(f)=C(f^{I_{0}}, \ldots, f^{I_{M}})$ is given as
\begin{eqnarray*} \hat{C}_{o}(f)=C_{o}(f^{I_{0}}, \ldots, f^{I_{M}})=\bigoplus \overline{f^{I_{0}}}^{[\pi(0)]}\odot \overline{f^{I_{1}}}^{[\pi(1)]}\odot\cdots\odot\overline{f^{I_{M}}}^{[\pi(M)]},
\end{eqnarray*} where the sum is taken over all permutations $\{\pi(0), \ldots, \pi(M)\}$ of $\{0, 1, \ldots, M\}.$ Clearly, when $d=1,$ we have $\hat{C}_{o}(f)=C_{o}(f).$ \par

Recall that in the classical algebraical geometry, if the image of a holomorphic curve $f=[f_{0}: f_{1}: \ldots: f_{n}]: \mathbb{C}\rightarrow\mathbb{P}^{n}(\mathbb{C})$ cannot be contained in any hypersurface (respectively hyperplane) in $\mathbb{P}^{n}(\mathbb{C}),$ then say that $f$ is algebraically (respectively linearly) nondegenerated that means that the entire functions $f_{0}, f_{1}, \ldots, f_{n}$ are algebraically (respectively linearly) independently. We give a similar definition for tropical holomorphic curves.\par

\begin{definition}
Let $f=[f_{0}: f_{1}: \ldots: f_{n}]:\mathbb{R}\rightarrow\mathbb{TP}^{n}$ be a tropical holomorphic curve. If for any tropical hypersurface (respectively hyperplane) $V_{P}$ in $\mathbb{TP}^{n}$ defined by a homogeneous tropical polynomial $P$ in $\mathbb{R}^{n+1},$ $f(\mathbb{R})$ is not a subset of $V_{P},$ then we say that $f$ is tropical algebraically (respectively linearly) nondegenerated.
\end{definition}

Then we give a relationship between tropical algebraically nondegenerated and algebraically independently in Grondran-Minoux sense.\par

\begin{proposition}\label{P1} A tropical holomorphic curve $f: \mathbb{R}\rightarrow\mathbb{TP}^{n}$ with reduced representation $f=(f_{0}, \ldots, f_{n})$ is tropical algebraically (respectively linearly) nondegenerated if and only if $f_{0}, \ldots, f_{n}$ are algebraically (respectively linearly) independently in the Gondran-Minoux sense.
\end{proposition}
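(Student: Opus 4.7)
The plan is to prove the proposition by reducing the algebraic statement to the linear one (via Definition \ref{D1}) and then settling both directions of the resulting equivalence. Setting $g_k := f^{I_k}$ for $k=0,\ldots,M$, the task becomes: $f(\mathbb{R}) \subset V_P$ for some non-trivial tropical polynomial $P$ if and only if $g_0, \ldots, g_M$ are linearly dependent in the Gondran--Minoux sense.

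The implication from Gondran--Minoux dependence to algebraic degeneracy is essentially by construction. Assuming a partition $\{0,\ldots,M\}=I\sqcup J$ together with constants $a_k\in\mathbb{R}_{\max}$, not all $0_o$, satisfying
\[
\bigoplus_{i\in I}a_i\odot g_i \;=\; \bigoplus_{j\in J}a_j\odot g_j
\]
on $\mathbb{R}$, I would set $P(x):=\bigoplus_{k=0}^{M}a_k\odot x^{I_k}$. At every $t\in\mathbb{R}$ the equality forces the maximum of $P(f(t))$ to be attained simultaneously by some index in $I$ and by a (necessarily distinct) index in $J$, so $f(t)\in V_P$ and $f(\mathbb{R})\subset V_P$. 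Both $I$ and $J$ must carry active indices, since otherwise the equation would reduce to $-\infty$ equalling a real-valued tropical entire function, a contradiction.

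For the converse, suppose $f(\mathbb{R})\subset V_P$ with $P=\bigoplus_k c_k\odot x^{I_k}$ (all monomials active), and write $h_k:=c_k+g_k$, $M(t):=\max_k h_k(t)$, and $S(t):=\{k: h_k(t)=M(t)\}$. The hypothesis translates to $|S(t)|\ge 2$ for every $t\in\mathbb{R}$. My first step is to prove the structural lemma that the winning set $E_k:=\{t:h_k(t)=M(t)\}$ meets the interior of every linear piece of $M$ either in the empty set or in the whole piece. The argument is convexity based: if $h_k$ touches $M$ at an interior point $t_0$ of a linear piece, then any subgradient of $h_k$ at $t_0$ must equal the slope of $M$ (otherwise the corresponding support line of $h_k$ would surpass $M$ nearby, contradicting $h_k\le M$), forcing $h_k$ to be locally linear with that slope; combined with closedness of $E_k$ this gives the claim.

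With this structural lemma in hand, the concluding step is to produce a Gondran--Minoux relation, which I would do by induction on the number of active monomials. If some $k_0$ satisfies $h_{k_0}\equiv M$, then taking $I=\{k_0\}$, $J$ the remaining indices, and $\alpha_k=c_k$ works, because $|S(t)|\ge 2$ guarantees another max-achiever in $J$ at every $t$. Otherwise, an asymptotic-slope analysis at $\pm\infty$ produces two indices whose $h_k$'s coincide on an unbounded half-line (among the finitely many indices of maximal asymptotic slope and intercept, at least two must agree by $|S(t)|\ge 2$), which I can use either to collapse the two indices (reducing $|A|$ and invoking induction) or to seed a partition. The principal obstacle I expect to confront is ruling out ``Fano-type'' combinatorial obstructions to $2$-colouring the winning-set hypergraph $\{S(t)\}_{t\in\mathbb{R}}$; here the slope monotonicity across adjacent linear pieces of $M$ imposed by convexity should force enough compatibility between neighbouring winning sets to avoid such obstructions and complete the induction.
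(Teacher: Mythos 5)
Your forward implication is fine and coincides with the paper's: a Gondran--Minoux relation $\bigoplus_{i\in I}a_i\odot f^{I_i}=\bigoplus_{j\in J}a_j\odot f^{I_j}$ immediately produces a homogeneous tropical polynomial whose maximum is attained at every $t$ both by an index of $I$ and by an index of $J$, hence at least twice, so $f(\mathbb{R})$ lies in its corner locus. Your structural lemma for the converse (each $h_k$ that touches $M=\max_k h_k$ at an interior point of a linear piece must coincide with $M$ on that whole piece, so the winning set is constant on each piece) is also correct. The gap is exactly the step you flag and postpone: you need the hypergraph of winning sets to be $2$-colourable so that every edge meets both colour classes, and convexity does \emph{not} rule out the triangle obstruction. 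Take $n=2$, $d=1$, $P=x_0\oplus x_1\oplus x_2$, and the tropical entire functions $f_0(t)=\max(-t,2t-1)$, $f_1(t)=\max(-t,t)$, $f_2(t)=\max(t,2t-1)$, whose roots are $1/3$, $0$, $1$ respectively (no common root, so this is a reduced representation). Then $M(t)=\max(-t,t,2t-1)$ and the winning sets on the three pieces $(-\infty,0]$, $[0,1]$, $[1,\infty)$ are $\{0,1\}$, $\{1,2\}$, $\{0,2\}$; at the breakpoints all three functions agree, so the maximum is attained at least twice everywhere and $f(\mathbb{R})\subset V_P$. Yet the edge system $\{0,1\},\{1,2\},\{0,2\}$ admits no $2$-colouring in which every edge is bichromatic, and a direct check (comparing leading slopes at $t\to\pm\infty$ one is forced to $I=\{0\}$, $J=\{1,2\}$ with $a_0=a_1=a_2$, which fails at $t=1/2$) shows that $f_0,f_1,f_2$ satisfy no Gondran--Minoux relation at all. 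So the implication ``degenerate $\Rightarrow$ dependent'' cannot be completed by any refinement of your colouring argument; your fallback induction also collapses, because merging two indices that agree only on a half-line destroys the hypothesis $\abs{S(t)}\geq 2$ on that half-line.

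For comparison, the paper's own proof of this direction has the same defect, only hidden: after decomposing $f(\mathbb{R})\cap V_P$ into pieces $L_1,\dots,L_m$ it selects a pair of monomials $\{\hat{j}^1,\hat{j}^2\}$ attaining the maximum on each piece, but then writes down a single global relation with coefficients $b_k$ supported on one such pair, tacitly assuming the same pair works on every piece. Your proposal is in this sense more honest than the paper: you isolated the genuine crux rather than eliding it, but the obstruction you hoped to exclude is realizable. Note that the direction both you and the paper do prove correctly (dependent $\Rightarrow$ degenerate, equivalently nondegenerate $\Rightarrow$ independent) is the only one invoked later, in the proof of Theorem \ref{T1}, so the downstream results are not affected by this gap.
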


\begin{proof} Assume that $f=(f_{0}, \ldots, f_{n})$ is tropical algebraically (respectively linearly) nondegenerated, this means that for any hypersurface (respectively hyperplane) $V_{P}$ in $\mathbb{TP}^{n}$ defined by a homogeneous tropical polynomial $P$ in $\mathbb{R}^{n+1},$ $f(\mathbb{R})\not\subset V_{P}.$ Now if $f_{0}, \ldots, f_{n}$ are algebraically (respectively linearly) dependently in the Gondran-Minoux sense, then there exist two nonempty disjoint subsets $I$ and $J$ of $K:=\{0, \ldots, M\}$ such that $I\cup J=K$ and
\begin{eqnarray*} \bigoplus_{i\in I}a_{i}\odot f_{0}^{\odot i_{0}}\odot\cdots\odot f_{n}^{\odot i_{n}}=\bigoplus_{j\in J}a_{j}\odot f_{0}^{\odot j_{0}}\odot\cdots\odot f_{n}^{\odot j_{n}}
\end{eqnarray*} where  $M=(_d^{n+d})-1,$  all $a_{i}, a_{j}\in\mathbb{R}_{\max}$ and $i_{0}+\ldots+i_{n}=j_{0}+\ldots+j_{n}\in \mathcal{J}_{d}.$
Hence, it gives a homogeneous tropical polynomial $\tilde{P}(x)=\bigoplus_{k=0}^{M} a_{k}\odot x_{0}^{\odot k_{0}}\odot\cdots\odot x_{n}^{\odot k_{n}}$ with degree $d=k_{0}+\ldots+k_{n}\in \mathcal{J}_{d}$ such that
\begin{eqnarray*} \tilde{P}(f)&=&\bigoplus_{k=0}^{M} a_{k}\odot f_{0}^{\odot k_{0}}\odot\cdots\odot f_{n}^{\odot k_{n}}\\
&=&\bigoplus_{i\in I}a_{i}\odot f_{0}^{\odot i_{0}}\odot\cdots\odot f_{n}^{\odot i_{n}}\\&=&\bigoplus_{j\in J}a_{j}\odot f_{0}^{\odot j_{0}}\odot\cdots\odot f_{n}^{\odot j_{n}}.
\end{eqnarray*} This implies that
$(f_{0}(x), \ldots, f_{n}(x))$ are points of $V_{\tilde{P}}$ for all $x\in \mathbb{R}.$ We obtain a contradiction. Hence $f_{0}, \ldots, f_{n}$ must be algebraically (respectively linearly) dependently in the Gondran-Minoux sense.\par

Now assume that $f_{0}, \ldots, f_{n}$ are algebraically (respectively linearly) independently in the Gondran-Minoux sense. If $f$ is tropical algebraically (respectively linearly) degenerated, then there exists one hypersurface (respectively hyperplane) $V_{P}$ in $\mathbb{TP}^{n}$ defined by a homogeneous tropical polynomial $P(x)=\bigoplus_{k=0}^{M} a_{k}\odot x_{0}^{\odot k_{0}}\odot\cdots\odot x_{n}^{\odot k_{n}}$ with degree $d=k_{0}+\ldots+k_{n}\in \mathcal{J}_{d}$ in $\mathbb{R}^{n+1},$ such that $f(\mathbb{R})\subset V_{P}.$ This means that for all $x\in \mathbb{R},$ $(f_{0}(x), \ldots, f_{n}(x))$ are roots of $P(x).$  Since $V_{P}$ consists of some lines (including segment lines and half lines) or located points at which the maximum is attained by two or more of the tropical monomials in $P.$  According to the continuity of the tropical curve $f,$  all image points $(f_{0}(x), \ldots, f_{n}(x))$ are located in continuous piecewise lines or  located points in $V_{P}$ on which the maximum can be taken at least twice in $$P\circ f=\bigoplus_{k=0}^{M} a_{k}\odot f_{0}^{\odot k_{0}}\odot\cdots\odot f_{n}^{\odot k_{n}}.$$ Denote by $\{L_{j}\}_{j=1}^{m}$ the set of all the lines or located points of $f(\mathbb{R})\subset V_{P}.$ Then we know that for each $L_{j}$ $(j\in\{1, \ldots, m\}),$ the maximum can be attained by two tropical monomials of $P\circ f,$ say that, given by
\begin{eqnarray*} a_{\hat{j}^1{}}\odot f_{0}^{\odot \hat{j}_{0}^{1}}\odot\cdots\odot f_{n}^{\odot \hat{j}_{n}^{1}}=a_{\hat{j}^{2}}\odot f_{0}^{\odot \hat{j}_{0}^{2}}\odot\cdots\odot f_{n}^{\odot \hat{j}_{n}^{2}}
\end{eqnarray*} where $\{\hat{j}^{1}, \hat{j}^{2}\}\subset\{0, 1, \ldots, M\},$  $\hat{j}_{0}^{1}+\ldots+\hat{j}_{n}^{1}=\hat{j}_{0}^{2}+\ldots+\hat{j}_{n}^{2}\in\mathcal{J}_{d},$ both $a_{\hat{j}^{1}}$ and $a_{\hat{j}^{2}}$ are not equal to $0_{o}.$ Hence We get that  for all $x\in \mathbb{R},$
\begin{eqnarray*}\bigoplus_{i\in I} b_{i}\odot f_{0}^{\odot i_{0}}\odot\cdots\odot f_{n}^{\odot i_{n}}
=\bigoplus_{j\in J} b_{j}\odot f_{0}^{\odot j_{0}}\odot\cdots\odot f_{n}^{\odot j_{n}}
\end{eqnarray*} where $I\cup J=\{0, 1,\ldots, M\},$ $I\cap J=\emptyset$ and \begin{eqnarray*}b_{k}=\left\{
                                               \begin{array}{lll}
                                                  a_{\hat{j}^{1}}, & \hbox{$k=\hat{j}^{1}\in I;$} \\
                                                  a_{\hat{j}^{2}}, & \hbox{$k=\hat{j}^{2}\in J;$}\\
                                                 0_{o}, & \hbox{otheres.}
                                               \end{array}
                                             \right.
\end{eqnarray*} This contradicts the assumption that $f_{0}, \ldots, f_{n}$ are algebraically (respectively linearly) independently in the Gondran-Minoux sense. Hence $f$ is tropical algebraically (respectively linearly) nondegenerated.
\end{proof}

For giving a new tropical first main theorem, we introduce the tropical Weil function, tropical proximity function for a tropical holomorphic map $f$ with respect to the tropical hypersurface $V_{P}$ with degree $d,$ similarly as in Classical Nevanlinna theory.\par

\begin{definition}  Let $f:\mathbb{R}\rightarrow\mathbb{TP}^{n}$ be a tropical holomorphic map,  let $V_{P}$ be a tropical hypersurface with degree $d$ defined by a homogeneous polynomial $P$ of degree $d$ and let $a$ be the vector defined by the polynomial $P.$ The tropical proximity function $m_{f}(r, V_{P})$ of $f$ with respect to $V_{P}$ is defined as
\begin{eqnarray*}m_{f}(r, V_{P}):=\frac{1}{2}\sum_{\sigma=\pm 1}\lambda_{V_{P}}(f(\sigma r)),\end{eqnarray*}
where $\lambda_{V_{P}}(f(\sigma r))$ means the tropical Weil function defined by $$\lambda_{V_{P}}(f(x)):=\frac{\|f(x)\|^{\odot d}\odot \|a\|^{\odot d}}{P(f)(x)}\oslash.$$\end{definition}

Note that $P(f)$ is a tropical holomorphic function on $\mathbb{R}$ which thus doesn't have any pole. Hence by the tropical Jensen formula, we have
\begin{eqnarray*} N(r, 1_{o}\oslash P(f))=\frac{1}{2}\sum_{\sigma=\pm 1}P(f)(\sigma r)-P(f)(0).
\end{eqnarray*}  Now one can easily deduce the following first main theorem for tropical hypersurfaces by the definitions of tropical characteristic function, counting function and approximation function. \par

\begin{theorem}({\bf First Main Theorem})\label{T4} If $f(\mathbb{R})\not\subset V_{P},$ then
\begin{eqnarray*}m_{f}(r, V_{P})+N(r, 1_{o}\oslash P(f))=d T_{f}(r)+O(1).\end{eqnarray*}
\end{theorem}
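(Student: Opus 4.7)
The plan is to unwind all three quantities into their underlying max-plus expressions and verify that the claimed equality reduces, after cancellation, to the definition of $T_f(r)$ plus a bounded term. The key observation is that $P(f) = P\circ f$ is a tropical entire function, so the tropical Jensen formula \eqref{E1.0} applies with no pole contribution.

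First I would rewrite the Weil function in ordinary arithmetic. By definition,
\begin{equation*}
\lambda_{V_P}(f(x)) \;=\; d\,\|f(x)\| \,+\, d\,\|a\| \,-\, P(f)(x),
\end{equation*}
so that
\begin{equation*}
m_{f}(r,V_P) \;=\; \frac{d}{2}\sum_{\sigma=\pm 1}\|f(\sigma r)\| \;+\; d\,\|a\| \;-\; \frac{1}{2}\sum_{\sigma=\pm 1} P(f)(\sigma r).
\end{equation*}
Next, since $P(f)$ is tropical entire (a finite max of tropical monomials in the $f_j$, each of which is tropical entire) it has no poles, whence $N(r,P(f))=0$. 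The tropical Jensen formula \eqref{E1.0} therefore yields
\begin{equation*}
N(r,1_o\oslash P(f)) \;=\; \frac{1}{2}\sum_{\sigma=\pm 1} P(f)(\sigma r) \;-\; P(f)(0).
\end{equation*}

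Adding the two identities, the terms $\frac{1}{2}\sum_\sigma P(f)(\sigma r)$ cancel and I obtain
\begin{equation*}
m_{f}(r,V_P) + N(r,1_o\oslash P(f)) \;=\; \frac{d}{2}\sum_{\sigma=\pm 1}\|f(\sigma r)\| + d\,\|a\| - P(f)(0).
\end{equation*}
Recalling $T_f(r)=\tfrac{1}{2}\sum_{\sigma=\pm 1}\|f(\sigma r)\|-\|f(0)\|$, the right-hand side equals
\begin{equation*}
d\,T_f(r) \;+\; \bigl(d\,\|f(0)\| + d\,\|a\| - P(f)(0)\bigr),
\end{equation*}
and the constant in parentheses is independent of $r$, hence $O(1)$.

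The only genuine subtlety, and the step I would expect a careful reader to check, is the hypothesis $f(\mathbb{R})\not\subset V_P$. Without it $P(f)$ could still vanish identically in the tropical sense (be constantly $0_o=-\infty$), in which case $P(f)(0)$ and the counting function $N(r,1_o\oslash P(f))$ would be undefined; Proposition \ref{P1} together with the nondegeneracy assumption guarantees that $P(f)$ is a genuine tropical entire function, so that Jensen's formula is legitimately applicable. Apart from this point, the proof is a direct bookkeeping exercise, with no estimation required; no growth hypothesis on $f$ is needed.
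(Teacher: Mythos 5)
Your proof is correct and follows exactly the route the paper intends: the paper's own ``proof'' consists of observing that $P(f)$ is tropical entire, applying the Jensen formula \eqref{E1.0} to get $N(r,1_o\oslash P(f))=\frac{1}{2}\sum_{\sigma=\pm1}P(f)(\sigma r)-P(f)(0)$, and leaving the remaining bookkeeping with the definitions of $m_f(r,V_P)$ and $T_f(r)$ to the reader, which is precisely what you carry out. The only quibble is your closing aside: since $P(f)(x)=\max_i\{c_{I_i}+i_0f_0(x)+\cdots+i_nf_n(x)\}$ is real-valued whenever $P$ has at least one coefficient in $\mathbb{R}$, it can never be identically $0_o=-\infty$, so the hypothesis $f(\mathbb{R})\not\subset V_P$ plays no role in legitimizing Jensen's formula here (it is present by analogy with the classical first main theorem), but this does not affect the validity of your argument.
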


In Section 6 we will discuss the difference of the counting function $N(r, 1_{o}\oslash P(f))$ from the counting function $N(r, \frac{1_{o}}{f\oplus a})$ for a tropical meromorphic function intersecting a value $a$ in $\mathbb{TP}^{1}.$ Hence the new tropical first main theorem (Theorem \ref{T4}) we obtained is different from the tropical first main theorem  \eqref{E1.1} due to Laine and Tohge \cite{laine-tohge} (see also \cite{halburd-southall}, \cite[Theorem 3.5]{korhonen-laine-tohge}).\par

Now we give our main result on the tropical second main theorem with tropical hypersurfaces as follows, in which the growth of tropical holomorphic curve $f$ is assumed as $\limsup_{r\rightarrow\infty}\frac{\log T_{f}(r) }{r}=0.$ It is surprised that whenever $\lambda=0,$ all inequalities become equalities. This is very different from the classical Nevanlinna theory.\par

\begin{theorem}({\bf Second Main theorem with tropical hypersurfaces})\label{T1} Let $q$ and $n$ be positive integers with $q\geq n.$ Let the tropical holomorphic curve $f: \mathbb{R}\rightarrow\mathbb{TP}^{n}$ be tropical algebraically nondegerated. Assume that tropical hypersurfaces $V_{P_{j}}$ are defined by homogeneous tropical polynomials $P_{j}$ $(j=1, \ldots, q)$ with degree $d_{j},$ respectively, and $d$ are the least common number of $d_{1}, \ldots, d_{q}.$ Let $M=(_d^{n+d})-1.$
If $\lambda=ddg (\{P_{M+2}\circ f, \ldots, P_{q}\circ f\})$ and $$\limsup_{r\rightarrow\infty}\frac{\log T_{f}(r)}{r}=0,$$  then \begin{eqnarray*}&&
(q-M-1-\lambda)T_{f}(r)\\&\leq&\sum_{j=1}^{q}\frac{1}{d_{j}}N(r, \frac{1_{o}}{P_{j}\circ f}\oslash)-\frac{1}{d}N(r, \frac{1_{o}}{C_{o}(P_{1}^{\odot \frac{d}{d_{1}}}\circ f, \ldots, P_{M+1}^{\odot \frac{d}{d_{M+1}}}\circ f)}\oslash)+o(T_{f}(r))\\
&=&\sum_{j=M+2}^{q}\frac{1}{d_{j}}N(r, \frac{1_{o}}{P_{j}\circ f}\oslash)+o(T_{f}(r))\\
&\leq&(q-M-1)T_{f}(r)+o(T_{f}(r))
\end{eqnarray*}
where $r$ approaches infinity outside an exceptional set of zero upper density measure. In the special case whenever $\lambda=0,$
\begin{eqnarray*}&&
(q-M-1)T_{f}(r)\\&=&\sum_{j=1}^{q}\frac{1}{d_{j}}N(r, \frac{1_{o}}{P_{j}\circ f}\oslash)-\frac{1}{d}N(r, \frac{1_{o}}{C_{o}(P_{1}^{\odot \frac{d}{d_{1}}}\circ f, \ldots, P_{M+1}^{\odot \frac{d}{d_{M+1}}}\circ f)}\oslash)+o(T_{f}(r))\\
&=&\sum_{j=M+2}^{q}\frac{1}{d_{j}}N(r, \frac{1_{o}}{P_{j}\circ f}\oslash)+o(T_{f}(r))
\end{eqnarray*}where $r$ approaches infinity outside an exceptional set of zero upper density measure.
\end{theorem}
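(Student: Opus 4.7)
The plan is to reduce Theorem \ref{T1} to a tropical Cartan-type second main theorem in the style of Theorem \ref{T0} via a tropical Veronese lifting, and then to upgrade the inequality to an equality in the degeneracy-free case by exploiting the exactness of the tropical Jensen formula (\ref{E1.0}) together with the sharpened logarithmic derivative lemma (Theorem \ref{L1}). First I would normalize exponents by setting $Q_{j}:=P_{j}^{\odot d/d_{j}}\circ f$, so every $Q_{j}$ is a degree-$d$ homogeneous tropical polynomial evaluation in $f_{0},\ldots,f_{n}$; the key observation is that $P_{j}^{\odot d/d_{j}}$ is itself a genuine homogeneous tropical polynomial of degree $d$, obtained by scaling exponents, and Gondran--Minoux relations are preserved under this positive scalar tropical multiplication. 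Enumerating the $M+1$ degree-$d$ monomials as $f^{I_{0}},\ldots,f^{I_{M}}$, I would form the Veronese-lifted tropical holomorphic curve $F:=[f^{I_{0}}:\cdots:f^{I_{M}}]:\mathbb{R}\to\mathbb{TP}^{M}$. By Proposition \ref{P1}, tropical algebraic nondegeneracy of $f$ is exactly linear independence of $f^{I_{0}},\ldots,f^{I_{M}}$ in the Gondran--Minoux sense, so $F$ is tropical linearly nondegenerate, and each $Q_{j}$ appears as a tropical linear combination of the components of $F$, i.e.\ as a tropical hyperplane evaluated along $F$.

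Next I would transfer the growth hypothesis to $F$ and run the tropical Cartan machinery. The identity
\begin{equation*}
\|F(x)\|=\max_{|I_{i}|=d}\{i_{0}f_{0}(x)+\cdots+i_{n}f_{n}(x)\}=d\,\|f(x)\|,
\end{equation*}
obtained by placing all weight on the largest coordinate, yields $T_{F}(r)=d\,T_{f}(r)+O(1)$, so the assumption $\limsup_{r\to\infty}\log T_{f}(r)/r=0$ carries over verbatim to $F$. Applying the proof scheme of Theorem \ref{T0} to $F$ and to the $q$ tropical linear combinations $Q_{1},\ldots,Q_{q}$, but with the older logarithmic derivative lemma replaced by Theorem \ref{L1} (and Lemma \ref{L2} controlling the exceptional set in upper density), produces
\begin{equation*}
(q-M-1-\lambda)\,T_{F}(r)\le N(r,1_{o}\oslash L)-N(r,L)+o(T_{F}(r)),
\end{equation*}
where $L=(Q_{1}\odot\cdots\odot Q_{q})\oslash C_{o}(Q_{1},\ldots,Q_{M+1})$. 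Dividing by $d$, converting back via $N(r,1_{o}\oslash Q_{j})=(d/d_{j})N(r,1_{o}\oslash(P_{j}\circ f))$, and feeding in the tropical first main theorem (Theorem \ref{T4}) to rewrite $(1/d_{j})N(r,1_{o}\oslash(P_{j}\circ f))$ as $T_{f}(r)-(1/d_{j})m_{f}(r,V_{P_{j}})+O(1)$, produce the first inequality of Theorem \ref{T1}; the middle equality then results from telescoping the contributions of $Q_{1},\ldots,Q_{M+1}$ against the Casoratian counting term.

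The striking feature is the exact equality when $\lambda=0$, and here the point is that the tropical Jensen formula (\ref{E1.0}) is an exact identity rather than an estimate: when every $Q_{j}$ with $j\ge M+2$ is complete in the Gondran--Minoux sense, each step from $\|F\|$ down through $L$ can be written as an equality of counting and proximity functions modulo logarithmic-derivative terms, which Theorem \ref{L1} absorbs into $o(T_{f}(r))$. The main obstacle I anticipate is the delicate bookkeeping around the Casoratian $C_{o}(Q_{1},\ldots,Q_{M+1})$: one must show that its zeros and poles match exactly the zeros of the $P_{j}\circ f$ with the correct (possibly non-integer) multiplicities $\omega_{Q_{j}\circ f}$, and that each logarithmic-derivative ratio $\overline{Q_{j}}^{[k]}\oslash Q_{j}$ is legitimately covered by Theorem \ref{L1} throughout the $\limsup\log T_{f}/r=0$ regime. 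The latter follows from $T_{Q_{j}}(r)\le d\,T_{f}(r)+O(1)$, but the combinatorial matching between the Gondran--Minoux completeness of the $Q_{j}$ and the row expansion of the Casoratian is the technical heart of the argument, and is exactly the place where the gap in the earlier literature (identified in the introduction) had to be closed.
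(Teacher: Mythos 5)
Your proposal is correct and follows essentially the same route as the paper: reduce to a common degree via $P_j^{\odot d/d_j}$, pass to the monomial (Veronese) lift so that each $P_j\circ f$ becomes a Gondran--Minoux linear combination of the $M+1$ functions $f^{I_i}$, run the Cartan-type argument of Theorem \ref{T0} with the improved logarithmic derivative lemma (Theorem \ref{L1}) and Lemma \ref{L2} controlling an exceptional set of zero upper density, and recover the equalities from the exactness of the tropical Jensen formula together with the first main theorem for the final upper bound. The paper re-derives the Cartan estimate directly for the lifted functions rather than citing Theorem \ref{T0} as a black box, and it uses the identity $\|F\|=d\|f\|$ only implicitly, but these are presentational differences, not mathematical ones.
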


\begin{definition} The defect of a tropical holomorphic curve $f: \mathbb{R}\rightarrow\mathbb{TP}^{n}$ intersecting a tropical hypersurface $V_{P}$ given by a tropical polynomial $P$ with degree $d$ on $\mathbb{R}^{n+1}$ is defined by
\begin{eqnarray*}
\delta_{f}(V_{P}):=\liminf_{r\rightarrow\infty} \frac{m_{f}(r, V_{P})}{d T_{f}(r)}=1-\limsup_{r\rightarrow\infty} \frac{N(r, \frac{1_{o}}{P\circ f}\oslash)}{d T_{f}(r)}.
\end{eqnarray*}
\end{definition} Then by Theorem \ref{T1}, we obtain immediately the following defect relation.\par

\begin{theorem}({\bf Defect relation}) \label{T5}Let $q$ and $n$ be positive integers with $q\geq n.$ Let the tropical holomorphic curve $f: \mathbb{R}\rightarrow\mathbb{TP}^{n}$ be tropical algebraically  nondegenerated. Assume that $P_{j}$ $(j=1, \ldots, q)$ are homogeneous tropical polynomials with degree $d_{j},$ and $d$ are the least common number of $d_{1}, \ldots, d_{q}.$ Let $M=(_d^{n+d})-1.$
If $\lambda=ddg (\{P_{M+2}\circ f, \ldots, P_{q}\circ f\})$ and $$\limsup_{r\rightarrow\infty}\frac{\log T_{f}(r)}{r}=0,$$  then
\begin{equation*}\sum_{j=1}^{q}\delta_{f}(V_{P_{j}})\leq M+1+\lambda,\,\, \mbox{and}\,\, \sum_{j=M+2}^{q}\delta_{f}(V_{P_{j}})\leq \lambda.\end{equation*} In special case whenever $\lambda=0,$ we get that $\delta_{f}(V_{P_{j}})=0$ for each $j\in\{M+2, \ldots,  q\}.$
\end{theorem}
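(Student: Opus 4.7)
The plan is to derive Theorem \ref{T5} as an immediate corollary of Theorem \ref{T1} by converting counting functions into proximity functions via the tropical First Main Theorem (Theorem \ref{T4}) and then passing to $\liminf$. No new analytical ingredient is required; the work reduces to two parallel algebraic rearrangements, one for each of the two claimed inequalities.

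For the first bound $\sum_{j=1}^{q}\delta_f(V_{P_j})\leq M+1+\lambda$, I would start from the first line of the inequality chain in Theorem \ref{T1} and discard the nonpositive term $-\tfrac{1}{d}N(r, 1_o\oslash C_o(\cdots))$, which produces
\begin{equation*}
(q-M-1-\lambda)T_f(r)\leq \sum_{j=1}^q \frac{1}{d_j}\,N\bigl(r, 1_o\oslash(P_j\circ f)\bigr)+o(T_f(r))
\end{equation*}
for $r$ outside an exceptional set $E$ of zero upper density. Theorem \ref{T4} rewrites each summand as $\tfrac{1}{d_j}N(r, 1_o\oslash(P_j\circ f))=T_f(r)-\tfrac{1}{d_j}m_f(r,V_{P_j})+O(1)$; substituting, summing over $j$, and rearranging gives $\sum_{j=1}^{q}\tfrac{1}{d_j}m_f(r,V_{P_j})\leq (M+1+\lambda)T_f(r)+o(T_f(r))$. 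Dividing by $T_f(r)$ and using the subadditivity $\sum_j\liminf a_j(r)\leq \liminf \sum_j a_j(r)$ produces the stated bound.

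For the sharper inequality $\sum_{j=M+2}^{q}\delta_f(V_{P_j})\leq \lambda$, I would instead invoke the middle equality of Theorem \ref{T1}, which already has only the counting functions with index $j\in\{M+2,\dots,q\}$. After substituting Theorem \ref{T4} into each of them, the leading $(q-M-1)T_f(r)$ on both sides cancels to leave $\sum_{j=M+2}^{q}\tfrac{1}{d_j}m_f(r,V_{P_j})\leq \lambda T_f(r)+o(T_f(r))$, and the same $\liminf$ step concludes. The special case $\lambda=0$ is then automatic from the nonnegativity of each defect (which follows from $0\leq N(r,1_o\oslash(P_j\circ f))\leq d_jT_f(r)+O(1)$ via Theorem \ref{T4}): a sum of nonnegative numbers that is bounded above by $0$ must vanish termwise. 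The only minor point to address is the exceptional set $E$ inherited from Theorem \ref{T1}: since $\overline{\mathrm{dens}}\,E=0$ and the defects are defined by a genuine $\liminf$ as $r\to\infty$, restricting to a cofinal sequence in $[1,\infty)\setminus E$ does not alter the $\liminf$, so the bound is preserved without loss, as is standard in Nevanlinna theory.
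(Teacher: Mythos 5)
Your proposal is correct and follows exactly the route the paper intends: the paper gives no separate proof of Theorem \ref{T5}, stating only that it follows immediately from Theorem \ref{T1}, and your derivation (drop the nonnegative Casoratian counting term, substitute the First Main Theorem $\tfrac{1}{d_j}N(r,1_o\oslash(P_j\circ f))=T_f(r)-\tfrac{1}{d_j}m_f(r,V_{P_j})+O(1)$, divide by $T_f(r)$, and use $\sum_j\liminf\leq\liminf\sum_j$ together with $0\leq\delta_f(V_{P_j})$ for the $\lambda=0$ case) is precisely that argument made explicit. The only cosmetic imprecision is the remark that restricting to $r\notin E$ ``does not alter the $\liminf$'' --- it may increase it, but since $\liminf_{r\to\infty}\leq\liminf_{r\notin E}$ the bound transfers in the needed direction, so the conclusion stands.
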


This theorem can be regarded as a tropical version of the Shiffman's conjecture \cite{shiffman} on defect relation in the classical Nevanlinna theory which says that $\sum_{j=1}^{q}\delta_{f}(Q_{j})\leq n+1$ holds for a algebraically nondegenerated holomorphic curve $f:\mathbb{C}\rightarrow\mathbb{P}^{n}(\mathbb{C})$ and hypersurfaces $\{Q_{j}\}_{j=1}^{q}$ with the same degree $d$ located in general position in $\mathbb{P}^{n}(\mathbb{C}).$ The Shiffman's conjecture was completely proved by Ru \cite{ru2004}. For further study, we propose also the tropical version of the Griffith's conjecture \cite{griffiths1972} (see also \cite{shiffman, ru2004, siu, hu-yang, biancofiore}) (that is, $\sum_{j=1}^{q}\delta_{f}(Q_{j})\leq \frac{n+1}{d}$) as follows, which is partially proved by Biancofiore \cite{biancofiore} for a class of holomorphic curves, by Siu \cite{siu} with $n+1=3$ using meromorphic connections, and by Hu-Yang \cite{hu-yang} solving a weaker form for a special class of holomorphic curves, respectively.\par

\begin{conjecture} Let $q$ and $n$ be positive integers with $q\geq n.$ Let the tropical holomorphic curve $f: \mathbb{R}\rightarrow\mathbb{TP}^{n}$ be tropical algebraically nondegenerated. Assume that $P_{j}$ $(j=1, \ldots, q)$ are homogeneous tropical polynomials with degree $d_{j},$ and $d$ are the least common number of $d_{1}, \ldots, d_{q}.$ Let $M=(_d^{n+d})-1.$
If $\lambda=ddg (\{P_{M+2}\circ f, \ldots, P_{q}\circ f\})$ and $\limsup_{r\rightarrow\infty}\frac{\log T_{f}(r) }{r}=0,$  then
\begin{equation*}\sum_{j=1}^{q}\delta_{f}(V_{P_{j}})\leq \frac{n+1+\lambda}{d}.\end{equation*}
\end{conjecture}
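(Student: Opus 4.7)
The plan is to attempt the strategies that have yielded partial progress on the classical Griffiths defect conjecture, adapted to the tropical setting, using Theorem \ref{T1} as the starting input. The most direct approach is via a tropical Veronese-type embedding: define $V_d\colon \mathbb{TP}^n\to\mathbb{TP}^M$ by $V_d([x_0:\ldots:x_n])=[x^{I_0}:\ldots:x^{I_M}]$. Under this map each tropical hypersurface $V_{P_j}$ of degree $d$ pulls back from a tropical hyperplane in $\mathbb{TP}^M$, and from the definitions one verifies $T_{V_d(f)}(r)=d\,T_f(r)+O(1)$ together with $\delta_{V_d(f)}(H_{P_j})=\delta_f(V_{P_j})$. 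Applying the hyperplane specialisation of Theorem \ref{T1} to $V_d(f)$ recovers only the bound $M+1+\lambda$, which is exactly the content of Theorem \ref{T5}. So the naive route falls short, and to reach $(n+1+\lambda)/d$ one must exploit the fact that $V_d(f)$ is not an arbitrary tropical holomorphic curve in $\mathbb{TP}^M$ but lies in the image of $V_d$, whose intrinsic dimension is only $n$.

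To try to extract this extra information, I would replace the full Casoratian $\hat C_o(f)=C_o(f^{I_0},\ldots,f^{I_M})$ used in the proof of Theorem \ref{T1} by a reduced Casoratian constructed from only $n+1$ carefully chosen monomials $f^{I_{k_0}},\ldots,f^{I_{k_n}}$ and shifts $x\mapsto x+\nu c$, and attempt to express each of the remaining $M-n$ monomials in terms of these via the tropical algebraic relations that hold on $V_d(\mathbb{TP}^n)$. Since $T_{f^{I_{k_j}}}(r)\le d\,T_f(r)+O(1)$ but only $n+1$ objects remain, the bookkeeping in the Jensen-identity step of the Theorem \ref{T1} proof should formally produce a factor of order $(n+1)/d$ rather than $(M+1)/d$, which is the desired improvement. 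The analytic inputs transfer without loss, because the shifted-quotient terms $m(r,f^{I_{k_j}}(x+\nu c)\oslash f^{I_{k_j}}(x))$ remain $o(T_f(r))$ under the hypothesis $\limsup_{r\to\infty}\log T_f(r)/r=0$ by the improved tropical logarithmic derivative lemma (Theorem \ref{L1}).

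The main obstacle, and the reason this remains conjectural even in the classical Nevanlinna theory outside very special cases (Siu for $n=2$, Biancofiore and Hu--Yang for restricted classes), is that there is at present no tropical counterpart of a meromorphic connection or of jet differentials to perform this reduction cleanly: the algebraic relations among the $f^{I_i}$ on the image of $V_d$ are classical polynomial identities whose tropicalisation is typically inequality-valued, so they do not immediately yield the identities one needs to express $\hat C_o(f)$ in terms of a smaller Casoratian. The tropical Casoratian $\hat C_o(f)$ genuinely lives on $\mathbb{TP}^M$, and its pole/zero structure is controlled by $d\,T_f(r)$, not $T_f(r)$; recovering the factor $1/d$ requires genuinely new input. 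A realistic first target is therefore a tropical analogue of Siu's $n=2$ case, approached by a fine combinatorial analysis of the corner loci of the tropical compositions $P_j\circ f$ and of the piecewise-linear multiplicities they contribute to the reduced Casoratian, since in dimension $2$ the explicit geometry of the Veronese image is manageable and the conjectured bound reduces to $(3+\lambda)/d$.
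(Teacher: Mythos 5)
You have not proved the statement, and you should be aware that the paper does not prove it either: this is stated in the paper explicitly as a \emph{conjecture} (the tropical analogue of the Griffiths conjecture), offered for ``further study'' immediately after the defect relation of Theorem \ref{T5}, and the only bound the paper actually establishes is $\sum_{j=1}^{q}\delta_{f}(V_{P_{j}})\leq M+1+\lambda$ with $M=\binom{n+d}{d}-1$. Your first paragraph is the sound part of your analysis: the Veronese-type reduction, in which each degree-$d$ tropical hypersurface pulls back from a tropical hyperplane in $\mathbb{TP}^{M}$ and $T_{V_d(f)}(r)=dT_f(r)+O(1)$, is essentially how the paper's own proof of Theorem \ref{T1} is organized (it works with the monomials $f^{I_0},\ldots,f^{I_M}$ and the Casoratian $C_o(f^{I_0},\ldots,f^{I_M})$), and you correctly observe that this can only ever yield the codimension $M+1+\lambda$ of the ambient space, not $n+1+\lambda$, let alone the extra factor $1/d$.

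The genuine gap is in your second paragraph. The proposal to replace the full $(M+1)\times(M+1)$ Casoratian by a reduced one built from $n+1$ chosen monomials requires expressing the remaining $M-n$ monomials $f^{I_i}$ through tropical algebraic relations valid on the image of the Veronese map; but as you yourself concede in the third paragraph, the tropicalizations of the classical Veronese relations are only inequalities in the max-plus semiring, so the Jensen-formula bookkeeping that drives the proof of Theorem \ref{T1} (which repeatedly uses exact identities such as $N(r,1_o\oslash g)-N(r,g)=\frac{1}{2}\sum_{\sigma=\pm1}g(\sigma r)-g(0)$) cannot be run on the reduced determinant without a new identity replacing those inequalities. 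The claim that the bookkeeping ``should formally produce a factor of order $(n+1)/d$'' is an expectation, not an argument: no tropical analogue of a meromorphic connection or of Siu's or Biancofiore's machinery is constructed, and no step of the proposal actually bounds $\sum_j\delta_f(V_{P_j})$ by anything smaller than $M+1+\lambda$. Since the paper leaves the statement open, your honest assessment that new input is required is consistent with the paper's own position, but what you have written is a research programme, not a proof, and should not be presented as one.
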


By Theorem \ref{T1} we get a new tropical version of Cartan-Nochka's second main theorem as follows. Whenever $d=d_{j}=1,$ we have $M=(_d^{n+d})-1=n;$  and from the case (i) in the proof of Theorem \ref{T1} in the next section we have $$N(r, 1_{o}\oslash L)-N(r, L)=\sum_{j=1}^{q}N(r, 1_{o}\oslash P_{j}\circ f)-N(r, 1_{o}\oslash C_{o}(P_{1}\circ f, \ldots, P_{n+1}\circ f)),$$ where $$L:=\frac{(P_{1}\circ f)\odot (P_{2}\circ f)\odot\cdots\odot (P_{n+1}\circ f)\odot (P_{n+2}\circ f)\odot\cdots\odot (P_{q}\circ f)}{C_{o}(P_{1}\circ f, P_{2}\circ f, \ldots, P_{n+1}\circ f)}\oslash.$$ Hence Theorem \ref{T0} is just a special case for tropical hyperplanes, that is, $d=d_{j}=1$ for all $j=1, 2, \ldots, q.$ Furthermore, we adopt the idea from the proof of \cite[Corollary 7.2]{korhonen-tohge-2016} and can deal with the ramification term $N(r, \frac{1_{o}}{C_{o}(P_{1}\circ f, \ldots, P_{n+1}\circ f)}\oslash).$ From this, it might be possible to consider the truncated form of tropical Cartan second main theorem in future. \par

\begin{corollary}({\bf Second main theorem with tropical hyperplanes}) \label{C0} Let $q$ and $n$ be positive integers with $q\geq n.$ Let the tropical holomorphic curve $f: \mathbb{R}\rightarrow\mathbb{TP}^{n}$ be tropical linearly nondegerated. Assume that $V_{P_{j}}$ are tropical hyperplanes in $\mathbb{TP}^{n}$ defined by homogeneous tropical polynomials $P_{j}$ $(j=1, \ldots, q)$ with degree $1,$ respectively.
If $\lambda=ddg (\{P_{n+2}\circ f, \ldots, P_{q}\circ f\})$ and $$\limsup_{r\rightarrow\infty}\frac{\log T_{f}(r) }{r}=0,$$  then
\begin{eqnarray*}&&
(q-n-1-\lambda)T_{f}(r)\\&\leq&N(r, 1_{o}\oslash L)-N(r, L)+o(T_{f}(r))\\&=&\sum_{j=1}^{q}N(r, \frac{1_{o}}{P_{j}\circ f}\oslash)-N(r, \frac{1_{o}}{C_{o}(P_{1}\circ f, \ldots, P_{n+1}\circ f)}\oslash)+o(T_{f}(r))\\
&\leq&\sum_{j=n+2}^{q}N(r, \frac{1_{o}}{P_{j}\circ f}\oslash)+o(T_{f}(r))\\
&\leq&(q-n-1)T_{f}(r)+o(T_{f}(r))
\end{eqnarray*}
where $r$ approaches infinity outside an exceptional set of zero upper density measure. In special case whenever $\lambda=0,$ we have
\begin{eqnarray*}&&
(q-n-1)T_{f}(r)\\&=&N(r, 1_{o}\oslash L)-N(r, L)+o(T_{f}(r))\\&=&\sum_{j=1}^{q}N(r, \frac{1_{o}}{P_{j}\circ f}\oslash)-N(r, \frac{1_{o}}{C_{o}(P_{1}\circ f, \ldots, P_{n+1}\circ f)}\oslash)+o(T_{f}(r))\\
&=&\sum_{j=n+2}^{q}N(r, \frac{1_{o}}{P_{j}\circ f}\oslash)+o(T_{f}(r))
\end{eqnarray*}
where $r$ approaches infinity outside an exceptional set of zero upper density measure.
\end{corollary}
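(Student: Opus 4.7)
The plan is to derive Corollary \ref{C0} as the $d_1=\cdots=d_q=1$ specialization of Theorem \ref{T1}, together with a routine rewriting of the two separate counting functions on the right-hand side as the single quantity $N(r,1_o\oslash L)-N(r,L)$.

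First I would substitute $d_1=\cdots=d_q=1$ into Theorem \ref{T1}. The least common multiple is then $d=1$, so $M=\binom{n+d}{d}-1=\binom{n+1}{1}-1=n$, and each factor $P_j^{\odot d/d_j}\circ f$ reduces to $P_j\circ f$. Hence the Casoratian appearing in Theorem \ref{T1} becomes $C_o(P_1\circ f,\ldots,P_{n+1}\circ f)$, the degeneracy index becomes $\lambda=ddg(\{P_{n+2}\circ f,\ldots,P_q\circ f\})$, and the tropical algebraic nondegeneracy hypothesis coincides with the tropical linear nondegeneracy of $f$ by Proposition \ref{P1} (since at degree $d=1$ the monomials $f^{I_k}$ are just the $f_k$ themselves). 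Plugging these specializations directly into the chain of inequalities and equalities of Theorem \ref{T1} yields
\begin{eqnarray*}
(q-n-1-\lambda)T_f(r)&\le&\sum_{j=1}^q N\!\left(r,\tfrac{1_o}{P_j\circ f}\oslash\right)-N\!\left(r,\tfrac{1_o}{C_o(P_1\circ f,\ldots,P_{n+1}\circ f)}\oslash\right)+o(T_f(r))\\
&=&\sum_{j=n+2}^q N\!\left(r,\tfrac{1_o}{P_j\circ f}\oslash\right)+o(T_f(r))\\
&\le&(q-n-1)T_f(r)+o(T_f(r)),
\end{eqnarray*}
outside a set of zero upper density, with equality throughout when $\lambda=0$, by the equality half of Theorem \ref{T1}.

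Second, I would identify the right-hand side of the first line with $N(r,1_o\oslash L)-N(r,L)$, where $L$ is the tropical rational function displayed in the corollary. Tropically, $L(x)$ is the ordinary real-valued function $(P_1\circ f)(x)+\cdots+(P_q\circ f)(x)-C_o(P_1\circ f,\ldots,P_{n+1}\circ f)(x)$. Each $P_j\circ f$ is tropical entire and so has no poles, and the Casoratian $C_o$ is itself tropical entire, being a tropical sum of products of shifts of the tropical entire functions $P_j\circ f$. Therefore the poles of $L$ are exactly the zeros of its denominator, giving $N(r,L)=N(r,1_o\oslash C_o(P_1\circ f,\ldots,P_{n+1}\circ f))$. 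Similarly, the zeros of the tropical product $(P_1\circ f)\odot\cdots\odot(P_q\circ f)$ have multiplicities equal to the sum of the individual multiplicities of the factors, so $N(r,1_o\oslash((P_1\circ f)\odot\cdots\odot(P_q\circ f)))=\sum_{j=1}^q N(r,1_o\oslash(P_j\circ f))$. Subtracting yields the exact identity
\[N(r,1_o\oslash L)-N(r,L)=\sum_{j=1}^q N\!\left(r,\tfrac{1_o}{P_j\circ f}\oslash\right)-N\!\left(r,\tfrac{1_o}{C_o(P_1\circ f,\ldots,P_{n+1}\circ f)}\oslash\right)\]
claimed in the corollary. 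The outermost bound $\sum_{j=n+2}^q N(r,1_o/(P_j\circ f)\oslash)\le(q-n-1)T_f(r)+o(T_f(r))$ follows from the tropical first main theorem (Theorem \ref{T4}) at degree $1$, which gives $N(r,1_o\oslash(P_j\circ f))\le T_f(r)+O(1)$ for each of the $q-n-1$ remaining indices.

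Since Theorem \ref{T1} does all of the analytic work, I expect no substantive obstacle here. The only mildly delicate point is the pole/zero bookkeeping of step two, namely checking that whatever common zeros the tropical numerator $(P_1\circ f)\odot\cdots\odot(P_q\circ f)$ shares with the denominator $C_o(P_1\circ f,\ldots,P_{n+1}\circ f)$ are accounted for compatibly with the additivity of tropical counting functions of entire factors. Once this routine verification is in place, Corollary \ref{C0} follows from Theorem \ref{T1} with no further input.
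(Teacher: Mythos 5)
Your proposal is correct and matches the paper's own derivation: the paper likewise obtains Corollary \ref{C0} by setting $d=d_{j}=1$ (so $M=n$) in Theorem \ref{T1} and invoking the identity $N(r,1_{o}\oslash L)-N(r,L)=\sum_{j=1}^{q}N(r,1_{o}\oslash(P_{j}\circ f))-N(r,1_{o}\oslash C_{o}(P_{1}\circ f,\ldots,P_{n+1}\circ f))$ established in case (i) of that theorem's proof. The only cosmetic difference is that the paper derives this identity from the tropical Jensen formula, whose additivity makes the common-zero bookkeeping you flag at the end a non-issue, rather than by directly matching poles and zeros of numerator and denominator.
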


\section{Proof of Theorem \ref{T1}}

(i). We first assume that $d_{j}=d$ holds for all $j=1, \ldots, q$ and \begin{eqnarray*} P_{j}(x)&=&\bigoplus_{i_{j0}+\ldots+i_{jn}=d} a_{i_{j0}, \ldots, i_{jn}}\odot x_{1}^{\odot i_{j0}}\odot\cdots\odot x_{n+1}^{\odot i_{jn}}\\
&=&\bigoplus_{k=0}^{M} a_{I_{jk}}x^{I_{jk}}.\end{eqnarray*} Take $f^{I_i}=f_{0}^{\odot i_{0}}\odot\cdots\odot f_{n}^{\odot i_{n}}$ $(i=0, \ldots, M)$ which are still tropical entire functions on $\mathbb{R}.$ Since $f=(f_{0}, f_{1}, \ldots, f_{n})$ is tropical algebraically nondegerated, it follows from Proposition \ref{P1} and Definition \ref{D1} that  $f=(f_{0}, \ldots, f_{n})$ is algebraically independent in the Gondran-Minoux sense, and thus $\tilde{F}=(f^{I_0}, f^{I_1}, \ldots, f^{I_M})$ is linearly independent in the Gondran-Minoux sense. Denote $g_{j-1}:=P_{j}\circ f$ for all $j=1,2, \ldots, q.$ By the properties of tropical Casorati determinant,
\begin{eqnarray*}
C_{o}(g_{0}, \ldots, g_{M})=g_{0}\odot \overline{g_{0}}\odot\cdots\odot\overline{g_{0}}^{[M]}\odot C_{o}(1_{o}, g_{1}\oslash g_{0}, \ldots, g_{M}\oslash g_{0}).
\end{eqnarray*}Take
\begin{equation*}
 \tilde{L}:=\frac{g_{0}\odot \overline{g_{1}}\odot\cdots\odot\overline{g_{M}}^{[M]}\odot g_{M+1}\odot\cdots\odot g_{q-1}}{C_{o}(g_{0}, g_{1}, \ldots, g_{M})}\oslash
\end{equation*}
and
\begin{eqnarray*}
\psi:=g_{M+1}\odot\cdots\odot g_{q-1},
\end{eqnarray*} then
\begin{eqnarray*}
\psi=\tilde{L}\odot K,
\end{eqnarray*} where\begin{eqnarray*}
K=C_{o}(1_{o}, g_{1}\oslash g_{0}, \ldots, g_{M}\oslash g_{0})\odot(\overline{g_{0}}\oslash\overline{g_{1}})\odot\cdots\odot(\overline{g_{0}}^{[M]}\oslash\overline{g_{M}}^{[M]}).
\end{eqnarray*}

We denote the $g_{\nu},$ $(M+1\leq\nu\leq q-1),$ to be
\begin{equation*}
g_{\nu}:=\bigoplus_{j\in S_{\nu}} t_{j\nu}\odot f^{I_{j\nu}}(x)=\max_{j\in S_{\nu}}\{t_{j\nu}+f^{I_{j\nu}}(x)\}, t_{j\nu}\in \mathbb{R},
\end{equation*} for  index sets $S_{\nu}\subset\{0, 1, \ldots, M\}$ with cardinality $\#S_{\nu}(\leq M+1).$ Then we have \begin{eqnarray}\label{E-a}
&&\sum_{\nu=M+1, \# S_{\nu}=M+1}^{q-1}\left(\frac{1}{2}\sum_{\sigma=\pm 1}g_{\nu}(\sigma r)\right)\\\nonumber
&=&\sum_{\nu=M+1, \# S_{\nu}=M+1}^{q-1}\frac{1}{2}\left(\max_{j\in S_{\nu}}\{t_{j\nu}+f^{I_{\j\nu}}(r)\}+\max_{j\in S_{\nu}}\{t_{j\nu}+f^{I_{\j\nu}}(-r)\}\right)\\\nonumber
&\geq& \sum_{\nu=M+1, \# S_{\nu}=M+1}^{q-1}\frac{1}{2}\left(\max_{j\in S_{\nu}}\{f^{I_{\j\nu}}(r)\}+\max_{j\in S_{\nu}}\{f^{I_{\j\nu}}(-r)\}\right)\\\nonumber&&+\sum_{\nu=M+1, \# S_{\nu}=M+1}^{q-1}\min_{j\in S_{\nu}}\{t_{j\nu}\}\\\nonumber
&=& \sum_{\nu=M+1, \# S_{\nu}=M+1}^{q-1}\frac{1}{2}\left(\max_{j_{\nu 0}+\ldots+j_{\nu n}=d}\{j_{\nu 0}f_{0}(r)+\ldots+j_{\nu n}f_{n}(r)\}\right.\\\nonumber&&
+\left.\max_{j_{\nu 0}+\ldots+j_{\nu n}=d}\{j_{\nu 0}f_{0}(-r)+\ldots+j_{\nu n}f_{n}(-r)\}\right)+\sum_{\nu=M+1, \# S_{\nu}=M+1}^{q-1}\min_{j\in S_{\nu}}\{t_{j\nu}\}\\\nonumber
&\geq& \sum_{\nu=M+1, \# S_{\nu}=M+1}^{q-1}\frac{1}{2}\left(d\max_{j=0}^{n}\{f_{j}(r)\}
+d\max_{j=0}^{n}\{f_{j}(-r)\}\right)\\\nonumber&&+\sum_{\nu=M+1, \# S_{\nu}=M+1}^{q-1}\min_{j\in S_{\nu}}\{t_{j\nu}\}.\end{eqnarray}

The condition $\lambda=ddg (\{P_{M+2}\circ f, \ldots, P_{q}\circ f\})$ which is the number of its non-complete elements means that there exist $q-M-1-\lambda$ complete elements in the set $\{P_{M+2}\circ f, \ldots, P_{q}\circ f\}.$ Since $g_{\nu}$ $(M+1\leq\nu\leq q-1)$ are piecewise linear entire functions on $\mathbb{R},$ there exist $\alpha_{\nu}, \beta_{\nu}\in\mathbb{R}$ and an interval $[r_{1}, r_{2}]\subset\mathbb{R}$ containing the origin such that $r_{1}<r_{2}$ and \begin{equation*}
g_{\nu}(x)=\alpha_{\nu}x+\beta_{\nu}
\end{equation*} for all $x\in[r_{1}, r_{2}].$ Then we get that
\begin{eqnarray*}
g_{\nu}(0)=\beta_{\nu}=\max_{j\in S_{\nu}}\{t_{j\nu}+f^{I_{j\nu}}(0)\}.
\end{eqnarray*} If define \begin{equation*}
h_{\nu}(x):=\alpha_{\nu}(x)+\beta_{\nu}
\end{equation*} for all $x\in \mathbb{R},$ then by the convexity of the graph of $g_{\nu}$ we get that
\begin{equation*}
g_{\nu}(x)\geq h_{\nu}(x)
\end{equation*}for all $x\in\mathbb{R}.$ Hence
\begin{equation*}
\frac{1}{2}\sum_{\sigma=\pm 1}g_{\nu}(\sigma x)\geq  \frac{1}{2}\sum_{\sigma=\pm 1}h_{\nu}(\sigma x)=\beta_{\nu}.
\end{equation*} This gives that \begin{eqnarray}\label{E-b}
&&\frac{1}{2}\sum_{\sigma=\pm 1}\psi(\sigma r)\\\nonumber&=&
\sum_{\nu=M+1}^{q-1}\left(\frac{1}{2}\sum_{\sigma=\pm 1}g_{\nu}(\sigma r)\right)\\\nonumber
&=&  \sum_{\nu=M+1,\# S_{\nu}=M+1}^{q-1}\left(\frac{1}{2}\sum_{\sigma=\pm 1}g_{\nu}(\sigma r)\right)+\sum_{\nu=M+1, \# S_{\nu}<M+1}^{q-1}\left(\frac{1}{2}\sum_{\sigma=\pm 1}g_{\nu}(\sigma r)\right)\\\nonumber
&\geq&\sum_{\nu=M+1, \# S_{\nu}=M+1}^{q-1}\left(\frac{1}{2}\sum_{\sigma=\pm 1}g_{\nu}(\sigma x)\right)+\sum_{\nu=M+1, \# S_{\nu}<M+1}^{q-1}\beta_{\nu}.\end{eqnarray}

According to the definition of tropical Cartan characteristic function,
\begin{eqnarray*}
T_{f}(r)+\max_{j=0, 1, \ldots, n}\{f_{j}(0)\}=\frac{1}{2}\max\{f_{0}(r), \ldots, f_{n}(r)\}+\frac{1}{2}\max\{f_{0}(-r), \ldots, f_{n}(-r)\}.
\end{eqnarray*}
Then we get from \eqref{E-a} that
\begin{eqnarray}\label{E-c}
&&\sum_{\nu=M+1, \# S_{\nu}=M+1}^{q-1}\left(\frac{1}{2}\sum_{\sigma=\pm 1}g_{\nu}(\sigma r)\right)\\\nonumber
&\geq&\sum_{\nu=M+1, \# S_{\nu}=M+1}^{q-1}d\left(T_{f}(r)+\max_{j=0}^{n}\{f_{j}(0)\}\right)+\sum_{\nu=M+1, \# S_{\nu}=M+1}^{q-1}\min_{j\in S_{\nu}}\{t_{j\nu}\}\\\nonumber
&=&(q-M-1-\lambda)d\left(T_{f}(r)+\max_{j=0}^{n}\{f_{j}(0)\}\right)+\sum_{\nu=M+1, \# S_{\nu}=M+1}^{q-1}\min_{j\in S_{\nu}}\{t_{j\nu}\}.
\end{eqnarray}
Therefore, it follows from \eqref{E-b} and \eqref{E-c} that
\begin{eqnarray*}
\frac{1}{2}\sum_{\sigma=\pm 1}\psi_{\nu}(\sigma r)
&\geq&(q-M-1-\lambda)d\left(T_{f}(r)+\max_{j=0}^{n}\{f_{j}(0)\}\right)\\&&+\sum_{\nu=M+1, \# S_{\nu}=M+1}^{q-1}\min_{j\in S_{\nu}}\{t_{j\nu}\}+\sum_{\nu=M+1, \# S_{\nu}<M+1}^{q-1}\beta_{\nu}.
\end{eqnarray*}
This gives an inequality of characteristic function $T_{f}(r)$ as follows
\begin{eqnarray}\label{E-d}&&
(q-M-1-\lambda)T_{f}(r)\\\nonumber &\leq& \frac{1}{d}\left(\frac{1}{2}\sum_{\sigma=\pm 1}\psi(\sigma r)\right)+\frac{1}{d}\sum_{\nu=M+1, \# S_{\nu}=M+1}^{q-1}\min_{j\in S_{\nu}}\{t_{j\nu}\}\\\nonumber&&+\frac{1}{d}\sum_{\nu=M+1, \# S_{\nu}<M+1}^{q-1}\beta_{\nu}+ (q-M-1-\lambda)\max_{j=0}^{n}\{f_{j}(0)\}.
\end{eqnarray}

Next we need obtain an estimation on the first term of the right side of the above inequality.  By the tropical Jensen's theorem and the definition of $\psi,$ we deduce
\begin{eqnarray*}&&
\frac{1}{2}\sum_{\sigma=\pm 1}\psi(\sigma r)\\&=&\frac{1}{2}\sum_{\sigma=\pm 1}\tilde{L}(\sigma r)+\frac{1}{2}\sum_{\sigma=\pm 1}K(\sigma r)\\
&=&N(r, 1_{o}\oslash \tilde{L})-N(r, \tilde{L})+\tilde{L}(0)+\frac{1}{2}\sum_{\sigma=\pm 1}K^{+}(\sigma r)-\frac{1}{2}\sum_{\sigma=\pm 1}(-K)^{+}(\sigma r)\\
&=&N(r, 1_{o}\oslash \tilde{L})-N(r, \tilde{L})+\tilde{L}(0)+m(r, K)-m(r, 1_{o}\oslash K)\\
&\leq&N(r, 1_{o}\oslash \tilde{L})-N(r, \tilde{L})+\tilde{L}(0)+m(r, K).
\end{eqnarray*}
Denote \begin{equation*}
L=\frac{g_{0}\odot g_{1}\odot\cdots\odot g_{M}\odot\cdots\odot g_{q-1}}{C_{o}(g_{0}, g_{1}, \ldots, g_{M})}\oslash,
 \end{equation*} which gives
 \begin{equation*}
\tilde{L}=L\odot \frac{\overline{g_{1}}\odot \overline{g_{2}}^{[2]}\odot\cdots\odot \overline{g_{M}}^{[M]}}{g_{1}\odot g_{2}\odot\cdots\odot g_{M}}\oslash.
 \end{equation*}
Then  the above inequalities give
\begin{eqnarray}\label{E-e}&&
\frac{1}{2}\sum_{\sigma=\pm 1}\psi(\sigma r)\\\nonumber&\leq&N(r, 1_{o}\oslash \tilde{L})-N(r, \tilde{L})+L(0)+\sum_{j=0}^{M}g_{j}(j)-\sum_{j=0}^{M}g_{j}(0)+m(r, K).
\end{eqnarray}
We will estimate $m(r, K)$ below. Since $g_{j}\oslash g_{0}$ are tropical meromorphic functions all $j\in\{1, 2, \ldots, M\},$ we have \begin{eqnarray*}
&&T_{g_{j}\oslash g_{0}}(r)\\&=&\frac{1}{2}\sum_{\sigma=\pm 1}\max\{g_{j}(\sigma r), g_{0}(\sigma r)\}-\max\{g_{j}(0), g_{0}(0)\}\\
&=& \frac{1}{2}\sum_{\sigma=\pm 1}\max_{j_{0}+\ldots+j_{n}=d}\{c_{j_0, \ldots, j_{n}}+j_{0}f_{0}(\sigma r)+\ldots+j_{n}f_{n}(\sigma r)\}-\max\{g_{j}(0), g_{0}(0)\}\\
&=& \frac{1}{2}\sum_{\sigma=\pm 1}\max_{j_{0}+\ldots+j_{n}=d}\{j_{0}f_{0}(\sigma r)+\ldots+j_{n}f_{n}(\sigma r)\}\\&&-\max\{g_{j}(0), g_{0}(0)\}+\max_{j_{0}+\ldots+j_{n}=d}\{c_{j_0, \ldots, j_{n}}\}\\
&\leq&\frac{1}{2}\sum_{\sigma=\pm 1}d\max\{f_{0}(\sigma r), f_{1}(\sigma r),\ldots, f_{n}(\sigma r)\}\\&&-\max\{g_{j}(0), g_{0}(0)\}+\max_{j_{0}+\ldots+j_{n}=d}\{c_{j_0, \ldots, j_{n}}\}\\
&\leq&dT_{f}(r)+d\max\{f_{0}(0), f_{1}(0), \ldots, f_{n}(0)\}-\max\{g_{j}(0), g_{0}(0)\}\\&&+\max_{j_{0}+\ldots+j_{n}=d}\{c_{j_0, \ldots, j_{n}}\}.
\end{eqnarray*}
This implies that \begin{eqnarray*} \limsup_{r\rightarrow \infty}\frac{\log T_{g_{j}\oslash g_{0}}(r)}{r}\leq\limsup_{r\rightarrow\infty}\frac{\log T_{f}(r) }{r}=0,\end{eqnarray*}
and then by Lemma \ref{L2} we get that  for any $k\in \mathbb{N},$
\begin{eqnarray*}
T_{\overline{g_{j}\oslash g_{0}}^{[k]}}(r)=(1+\varepsilon(r))
T_{g_{j}\oslash g_{0}}(r)=T_{f}(r)+o(T_{f}(r))
\end{eqnarray*} holds for all $r\not\in E$ with $\overline{dens} E=0$ (Throughout this proof, $E$ always means having the property $\overline{dens} E=0$). Therefore, for any $k\in \mathbb{N}$ \begin{eqnarray*}\limsup_{r\rightarrow \infty}\frac{\log T_{\overline{g_{j}\oslash g_{0}}^{[k]}}(r)}{r}\leq\limsup_{r\rightarrow\infty}\frac{\log T_{f}(r) }{r}=0.\end{eqnarray*}
Note that
\begin{eqnarray*}
K&=&C_{o}(1_{o}, g_{1}\oslash g_{0}, \ldots, g_{M}\oslash g_{0})\odot(\overline{g_{0}}\oslash\overline{g_{1}})\odot\cdots\odot(\overline{g_{0}}^{[M]}\oslash\overline{g_{M}}^{[M]})\\
&=&\frac{\bigoplus(\overline{g_{1}}^{[\pi(0)]}\oslash \overline{g_{0}}^{[\pi(0)]})\odot \ldots\odot (\overline{g_{M}}^{[\pi(M)]}\oslash \overline{g_{0}}^{[\pi(M)]})}{(\overline{g_{1}}\oslash\overline{g_{0}})\odot\cdots\odot(\overline{g_{M}}^{[M]}\oslash\overline{g_{0}}^{[M]})}\oslash\\
&=&\bigoplus\left(\frac{\left(\overline{g_{1}\oslash g_{0}}\right)^{[\pi(0)]}}{\overline{g_{1}\oslash g_{0}}}\oslash\right) \odot \ldots\odot \left(\frac{\left(\overline{g_{M}\oslash g_{0}}\right)^{[\pi(M)]}}{\overline{g_{M}\oslash g_{0}}^{[M]}}\oslash\right)
\end{eqnarray*}
where the tropical sum is taken over all permutations $\{\pi(0), \ldots, \pi(M)\}$ of the set $\{0, 1, \ldots, M\}.$ Now by Theorem \ref{L1}, we deduce that \begin{eqnarray}\label{E-f}
m(r, K)=o(T_{f}(r))
\end{eqnarray}holds for all $r\not\in E$ with $\overline{dens}E=0.$\par

Therefore, it follows from \eqref{E-d}, \eqref{E-e} and \eqref{E-f} that
\begin{eqnarray}\label{E-g}
(q-M-1-\lambda)T_{f}(r)&\leq&\frac{1}{d}N(r, 1_{o}\oslash \tilde{L})-\frac{1}{d}N(r, \tilde{L})+o(T_{f}(r))
\end{eqnarray}for all $r\not\in E$ with $\overline{dens}E=0.$\par

The next step is to estimate $N(r, 1_{o}\oslash \tilde{L})$ and $N(r, \tilde{L}).$ Note that
\begin{equation*}
\tilde{L}=L\odot \frac{\overline{g_{1}}\odot \overline{g_{2}}^{[2]}\odot\cdots\odot \overline{g_{M}}^{[M]}}{g_{1}\odot g_{2}\odot\cdots\odot g_{M}}\oslash
 \end{equation*} and that $g_{1}, \ldots, g_{M}$ are tropical entire functions. Then by the tropical Jensen formula,
 \begin{eqnarray}\label{E-h}
&&N(r, 1_{o}\oslash\tilde{L})-N(r, \tilde{L})\\\nonumber
&=&\frac{1}{2}\sum_{\sigma=\pm 1} \tilde{L}(\sigma r) -\tilde{L}(0)\\\nonumber
&=&\frac{1}{2}\sum_{\sigma=\pm 1} L(\sigma r)-L(0)+\sum_{j=1}^{M}\frac{1}{2}\left(\sum_{\sigma=\pm 1}\overline{g_{j}}^{[j]}(\sigma r)+\overline{g_{j}}^{[j]}(0)\right)\\\nonumber&&-\sum_{j=1}^{M}\frac{1}{2}\left(\sum_{\sigma=\pm 1}g_{j}(\sigma r)+g_{j}(0)\right)\\\nonumber
&=&N(r, 1_{o}\oslash L)-N(r, L)+\sum_{j=1}^{M}\left(N(r, 1_{o}\oslash\overline{g_{j}}^{[j]})-N(r, 1_{o}\oslash g_{j})\right)\\\nonumber
&\leq&N(r, 1_{o}\oslash L)-N(r, L)+\sum_{j=1}^{M}\left(N(r+jc, 1_{o}\oslash g_{j})-N(r, 1_{o}\oslash g_{j})\right),
 \end{eqnarray}where the last inequality is deduced from the geometric meaning between $N(r, 1_{o}\oslash \overline{g}_{j}^{[j]})$ and $N(r+jc, 1_{o}\oslash g_{j}).$ Using the tropical Jensen formula again, we deduce that
\begin{eqnarray*}
&&N(r, 1_{o}\oslash g_{j})\\&=& \frac{1}{2}\sum_{\sigma=\pm 1}g_{j}(\sigma r)-g_{j}(0)\\
&=& \frac{1}{2}\sum_{\sigma=\pm 1}\max_{j_{0}+\ldots+j_{n}=d}\{c_{j_0, \ldots, j_{n}}+j_{0}f_{0}(\sigma r)+\ldots+j_{n}f_{n}(\sigma r)\}-g_{j}(0)\\
&=& \frac{1}{2}\sum_{\sigma=\pm 1}\max_{j_{0}+\ldots+j_{n}=d}\{j_{0}f_{0}(\sigma r)+\ldots+j_{n}f_{n}(\sigma r)\}\\&&-g_{j}(0)+\max_{j_{0}+\ldots+j_{n}=d}\{c_{j_0, \ldots, j_{n}}\}\\
&\leq&\frac{1}{2}\sum_{\sigma=\pm 1}d\max\{f_{0}(\sigma r), f_{1}(\sigma r),\ldots, f_{n}(\sigma r)\}\\&&-g_{j}(0)+\max_{j_{0}+\ldots+j_{n}=d}\{c_{j_0, \ldots, j_{n}}\}\\
&\leq&dT_{f}(r)+d\max\{f_{0}(0), f_{1}(0), \ldots, f_{n}(0)\}-g_{j}(0)\\&&+\max_{j_{0}+\ldots+j_{n}=d}\{c_{j_0, \ldots, j_{n}}\},
\end{eqnarray*}
This implies \begin{eqnarray*}
\limsup_{r\rightarrow\infty} \frac{\log N(r, 1_{o}\oslash g_{j})}{r}\leq \limsup_{r\rightarrow\infty} \frac{\log T_{f}(r) }{r}=0.\end{eqnarray*}
Hence by Lemma \ref{L2},  \begin{eqnarray}\label{E1}
N(r+jc, 1_{o}\oslash g_{j})-N(r, 1_{o}\oslash g_{j})=\varepsilon(r)N(r, 1_{o}\oslash g_{j})= o(T_{f}(r))
\end{eqnarray}holds for $r\not\in E$ with $\overline{dens}E=0.$ Therefore we get from \eqref{E-h} and \eqref{E1} that  \begin{eqnarray*}
N(r, 1_{o}\oslash\tilde{L})-N(r, \tilde{L})
\leq N(r, 1_{o}\oslash L)-N(r, L)+o(T_{f}(r))
 \end{eqnarray*}holds for $r\not\in E$ with $\overline{dens}E=0.$
Combining this with \eqref{E-g} gives
\begin{eqnarray}\label{E-i}
(q-M-1-\lambda)T_{f}(r)&\leq&\frac{1}{d}N(r, 1_{o}\oslash L)-\frac{1}{d}N(r, L)+o(T_{f}(r)).
\end{eqnarray}for all $r\not\in E$ with $\overline{dens}E=0.$\par

Note that $g_{j}$ $(j=0, \ldots, q-1)$ and $C_{o}(g_{0}, \ldots, g_{M})$ are all tropical entire functions. Then according to the definition of $L,$ we can get from the tropical Jensen formula that
\begin{eqnarray}\label{E-j}&&
N(r, 1_{o}\oslash L)-N(r, L)\\\nonumber&=&\frac{1}{2}\sum_{\sigma=\pm 1}L(\sigma r)-L(0)\\\nonumber
&=&\sum_{j=0}^{q-1}\left(\frac{1}{2}\sum_{\sigma=\pm 1}g_{j}(\sigma r)-g_{j}(0)\right)\\\nonumber
&&-\left(\frac{1}{2}\sum_{\sigma=\pm 1}C_{o}(g_{0}, \ldots, g_{M})(\sigma r)-C_{o}(g_{0}, \ldots, g_{M})(0)\right)\\\nonumber
&=&\sum_{j=0}^{q-1}N(r, 1_{o}\oslash g_{j})-N(r, 1_{o}\oslash C_{o}(g_{0}, \ldots, g_{M})).
\end{eqnarray}
Now combining \eqref{E-i} and \eqref{E-j}, we get the form of the second main theorem that  \begin{eqnarray}\label{E6.4}
&&(q-M-1-\lambda)T_{f}(r)\\\nonumber
&\leq&\frac{1}{d}\sum_{j=0}^{q-1}N(r, 1_{o}\oslash g_{j})-\frac{1}{d}N(r, 1_{o}\oslash C_{o}(g_{0}, \ldots, g_{M}))+o(T_{f}(r)).
\end{eqnarray}for all $r\not\in E$ with $\overline{dens}E=0.$\par

Now we estimate $N(r, 1_{o}\oslash C_{o}(g_{0}, \ldots, g_{M})).$ According to the definition of tropical Casorati determinant, we have
\begin{eqnarray}\label{E6.1}
&&C_{o}(g_{0}, \ldots, g_{M})\\\nonumber
&=&\bigoplus(\overline{g_{0}}^{[\pi(0)]}\odot \cdots \odot \overline{g_{M}}^{[\pi(M)]})\\\nonumber
&=&\left\{\bigoplus\left[\left(\overline{g_{0}}^{[\pi(0)]}\odot \cdots \odot \overline{g_{M}}^{[\pi(M)]}\right)\oslash\left(g_{0} \odot \cdots \odot g_{M}\right)\right]\right\}+g_{0}\odot \cdots \odot g_{M}\\\nonumber
&=&\bigoplus\left[\left(\overline{g_{0}}^{[\pi(0)]}\oslash g_{0}\right) \odot \cdots \odot \left(\overline{g_{M}}^{[\pi(M)]}\oslash g_{M}\right)\right]+ g_{0}\odot \cdots \odot g_{M},\end{eqnarray} where the sum is taken over all permutations $\{\pi(0), \ldots, \pi(M)\}$ of $\{0, 1, \ldots, M\}.$ If denote $$D=\bigoplus\left[\left(\overline{g_{0}}^{[\pi(0)]}\oslash g_{0}\right) \odot \cdots \odot \left(\overline{g_{M}}^{[\pi(M)]}\oslash g_{M}\right)\right],$$ then \begin{eqnarray*} D\geq \left(\overline{g_{0}}^{[\pi(0)]}\oslash g_{0}\right) \odot \cdots \odot \left(\overline{g_{M}}^{[\pi(M)]}\oslash g_{M}\right)\end{eqnarray*} for any  permutation $\{\pi(0), \ldots, \pi(M)\}$ of $\{0, 1, \ldots, M\}.$ By the tropical Jensen formula and \eqref{E1},
\begin{eqnarray}\label{E6.2}&&\frac{1}{2}\sum_{\sigma=\pm 1} D(\sigma r)-D(0)\\\nonumber
&\geq& \sum_{j=0}^{M}\left(\frac{1}{2}\sum_{\sigma=\pm 1} (\overline{g_{j}}^{[\pi(j)]}\oslash g_{j})(\sigma r)\right)-D(0)\\\nonumber
&=&\sum_{j=0}^{M}\left(\frac{1}{2}\sum_{\sigma=\pm 1} \overline{g_{j}}^{[\pi(j)]}(\sigma r)- \frac{1}{2}\sum_{\sigma=\pm 1}g_{j}(\sigma r)\right)-D(0)\\\nonumber
&=& \sum_{j=0}^{M}\left(N(r, 1_{o}\oslash(\overline{g_{j}}^{[\pi(j)]}))-N(r, 1_{o}\oslash g_{j})\right)-D(0)+\sum_{j=0}^{M}\left(\overline{g_{j}}^{[\pi(j)]}(0)-g_{j}(0)\right)\\\nonumber
&=& \sum_{j=0}^{M}\left(N(r+jc, 1_{o}\oslash g_{j})-N(r, 1_{o}\oslash g_{j})\right)-D(0)+\sum_{j=0}^{M}\left(\overline{g_{j}}^{[\pi(j)]}(0)-g_{j}(0)\right)\\\nonumber
&=&o(T_{f}(r)),
\end{eqnarray} holds for $r\not\in E$ with $\overline{dens}E=0.$
Hence using the tropical Jensen formula again, it gives by \eqref{E6.1} and \eqref{E6.2} that
\begin{eqnarray}\label{E6.3}&&
N(r, 1_{o}\oslash C_{o}(g_{0},\ldots, g_{M}))\\\nonumber
&=&\frac{1}{2}\sum_{\sigma =\pm 1}C_{o}(g_{0}, \ldots, g_{M})(\sigma r)-C_{o}(g_{0},\ldots, g_{M})(0)\\\nonumber
&=&\frac{1}{2}\sum_{\sigma=\pm 1} D(\sigma r)-D(0)+\sum_{j=0}^{M}\left(\frac{1}{2}\sum_{\sigma=\pm 1}g_{j}(\sigma r)-g_{j}(0)\right)\\\nonumber
&=&\frac{1}{2}\sum_{\sigma=\pm 1} D(\sigma r)-D(0)+\sum_{j=0}^{M}N(r, 1_{o}\oslash g_{j})\\\nonumber
&\geq& o(T_{f}(r))+\sum_{j=1}^{M+1}N(r, \frac{1_{o}}{P_{j}\circ f}\oslash)
\end{eqnarray}holds for $r\not\in E$ with $\overline{dens}E=0.$ Submitting \eqref{E6.3} into \eqref{E6.4} gives that
\begin{eqnarray*}&&(q-M-1-\lambda)T_{f}(r)\\
&\leq&\frac{1}{d}\sum_{j=0}^{q-1}N(r, 1_{o}\oslash g_{j})-\frac{1}{d}N(r, 1_{o}\oslash C_{o}(g_{0}, \ldots, g_{M}))+o(T_{f}(r))\\
&\leq&\frac{1}{d}\sum_{j=M+2}^{q}N(r, \frac{1_{o}}{P_{j}\circ f}\oslash)+o(T_{f}(r))\\
\end{eqnarray*}where $r$ approaches infinity outside an exceptional set of zero upper density measure.\par

(ii). We now consider general case whenever the degree of homogeneous polynomials $P_{j}$ $(j=1, \ldots, q)$ are $d_{j}$ respectively. Assume that
\begin{eqnarray*} P_{j}(x)&=&\bigoplus_{i_{j0}+\ldots+i_{jn}=d_{j}} a_{i_{j0}, \ldots, i_{jn}}\odot x_{1}^{\odot i_{j0}}\odot\cdots\odot x_{n+1}^{\odot i_{jn}}\\
&=&\max_{i_{j0}+\ldots+i_{jn}=d_{j}}\{a_{i_{j0}, \ldots, i_{jn}}+ i_{j0}x_{1}+\cdots+ i_{jn}x_{n+1}\}.\end{eqnarray*}
Then \begin{eqnarray*} P_{j}^{\odot \frac{d}{d_{j}}}(x)&=&\frac{d}{d_{j}}P_{j}(x)\\
&=&\frac{d}{d_{j}}\bigoplus_{i_{j0}+\ldots+i_{jn}=d_{j}} a_{i_{j0}, \ldots, i_{jn}}\odot x_{1}^{\odot i_{j0}}\odot\cdots\odot x_{n+1}^{\odot i_{jn}}\\
&=&\max_{i_{j0}+\ldots+i_{jn}=d_{j}}\{\frac{d}{d_{j}}a_{i_{j0}, \ldots, i_{jn}}+ \frac{d}{d_{j}}i_{j0}x_{1}+\cdots+ \frac{d}{d_{j}}i_{jn}x_{n+1}\}\\
&=&\bigoplus_{\frac{d}{d_{j}}i_{j0}+\ldots+\frac{d}{d_{j}}i_{jn}=d
}(\frac{d}{d_{j}}a_{i_{j0}, \ldots, i_{jn}})\odot x_{1}^{\odot \frac{d}{d_{j}}i_{j0}}\odot\cdots\odot x_{n+1}^{\odot \frac{d}{d_{j}}i_{jn}}.
\end{eqnarray*} Thus all $P_{j}^{\odot \frac{d}{d_{j}}}(x)$ are of degree $d.$  Furthermore, we can see that if $x_{0}$ is a root of $P_{j}\circ f$ with multiplicity $\omega_{P_{j}\circ f}(x_{0})>0,$  then $x_{0}$ should be also a root of $P_{j}^{\odot \frac{d}{d_{j}}}\circ f$ $(=\frac{d}{d_{j}}P_{j}\circ f)$ with multiplicity $\omega_{P_{j}^{\odot \frac{d}{d_{j}}}\circ f}(x_{0})=\frac{d}{d_{j}}\omega_{P_{j}\circ f}(x_{0})>0.$ The inverse is also true. This implies that \begin{eqnarray*}N(r, \frac{1_{o}}{P_{j}^{\odot \frac{d}{d_{j}}}\circ f}\oslash)=\frac{d}{d_{j}}N(r, \frac{1_{o}}{P_{j}\circ f}\oslash).\end{eqnarray*} Hence by the conclusion (i), we have
\begin{eqnarray*}&&
(q-M-1-\lambda)T_{f}(r)\\&\leq&\frac{1}{d}\sum_{j=1}^{q}N(r, \frac{1_{o}}{P_{j}^{\odot \frac{d}{d_{j}}}\circ f}\oslash)-\frac{1}{d}N(r, \frac{1_{o}}{C_{o}(P_{1}^{\odot \frac{d}{d_{1}}}\circ f, \ldots, P_{M+1}^{\odot \frac{d}{d_{M+1}}}\circ f)}\oslash)+o(T_{f}(r))\\
&\leq&\frac{1}{d}\sum_{j=M+2}^{q}N(r, \frac{1_{o}}{P_{j}^{\odot \frac{d}{d_{j}}}\circ f}\oslash)+o(T_{f}(r))\\
&=&\sum_{j=M+2}^{q}\frac{1}{d_{j}}N(r, \frac{1_{o}}{P_{j}\circ f}\oslash)+o(T_{f}(r))
\end{eqnarray*}where $r$ approaches infinity outside an exceptional set of finite upper density measure. By the first main theorem (Theorem \ref{T4}) we have
$$N(r, \frac{1_{o}}{P_{j}\circ f}\oslash)\leq d_{j} T_{f}(r)$$ for all $j=1, \ldots, q.$ Therefore, the theorem is proved immediately.\par

\section{Tropical Nevanlinna second main theorem and defect relation}
In this section we mainly give new versions of tropical Nevanlinna second main theorem which are very different from before and then obtain the defect relation. Recall that the first version of second main theorem for tropical meromorphic functions was obtained by Laine and Tohge \cite{laine-tohge}, which is an analogue of the Nevanlinna second main theorem for meromorphic functions on the complex plane in the Classical Nevanlinna theory \cite{nevanlinna}. Note that the equality \eqref{E4.7} below was written as an inequality form in the original statement \cite{laine-tohge},  and holds from the fact that $N(r, 1_{o}\oslash(f\oplus a_{j}))\leq T(r, f)+O(1)$ for all $j\in\{1, \ldots, q\}$ according to the first main theorem \eqref{E1.1}. \par

\begin{theorem}\cite{laine-tohge}\label{T2} If $f$ is a nonconstant tropical meromorphic function of hyperorder $\rho_{2}(f)<1,$ if $\varepsilon>0,$ and $q(\geq 1)$ distinct values $a_{1}, \ldots, a_{q}\in\mathbb{R}$ satisfying \begin{eqnarray}
\label{E4.1} \max\{a_{1}, \ldots, a_{q}\}<\inf\{f(\alpha): \omega_{f}(\alpha)<0\},
\end{eqnarray} and \begin{eqnarray}\label{E4.6}\inf\{f(\beta): \omega_{f}(\beta)>0\}>-\infty.\end{eqnarray} Then \begin{eqnarray}\label{E4.7}
qT(r,f)=\sum_{j=1}^{q}N(r, 1_{o}\oslash(f\oplus a_{j}))+o(\frac{T(r, f)}{r^{1-\rho_{2}(f)-\varepsilon}})
\end{eqnarray} for all $r$ outside of an exceptional set of finite logarithmic measure.\end{theorem}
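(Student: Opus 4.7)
The plan is to deduce the equality \eqref{E4.7} by sandwiching the sum $\sum_{j=1}^{q}N(r,1_o\oslash(f\oplus a_j))$ between two complementary bounds, the first being the substantive Laine--Tohge contribution and the second an essentially free consequence of the first main theorem.

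For the substantive half, I would reproduce the Laine--Tohge one-sided inequality
\[
qT(r,f)\le \sum_{j=1}^{q} N(r,1_o\oslash(f\oplus a_j)) + o\!\left(\frac{T(r,f)}{r^{1-\rho_2(f)-\varepsilon}}\right)
\]
by applying the tropical Jensen formula \eqref{E1.0} to the tropical product $\bigodot_{j=1}^{q}(f\oplus a_j)$, extracting $qT(r,f)$ via comparison with $f^{\odot q}$, and invoking the tropical logarithmic derivative lemma at the hyperorder-less-than-one level to absorb the resulting shifted proximity terms into the announced $o$-error. Hypothesis \eqref{E4.1}, $\max_j a_j<L_f$, guarantees that each $f\oplus a_j$ preserves the pole structure of $f$ while creating zeros only at controlled locations, and \eqref{E4.6} prevents those zeros from escaping to $-\infty$, so that the proximity estimates close properly on both sides $\pm r$.

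The complementary upper bound comes directly from the tropical first main theorem \eqref{E1.1}: for each $j$, the condition $a_j<L_f$ guarantees
\[
T(r,1_o\oslash(f\oplus a_j))=T(r,f)+O(1),
\]
and since $T=m+N$ with a nonnegative proximity term, this forces $N(r,1_o\oslash(f\oplus a_j))\le T(r,f)+O(1)$. Summing over $j$ gives $\sum_{j=1}^{q}N(r,1_o\oslash(f\oplus a_j))\le qT(r,f)+O(1)$, and combining with the Laine--Tohge lower bound pinches both sides to equality within the stated error term.

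The main obstacle is the first stage: reproducing the Laine--Tohge argument requires careful algebraic manipulation inside the max-plus semiring so that the combinatorics in $\bigodot_j(f\oplus a_j)$ exposes exactly the $qT(r,f)$ contribution after Jensen, and one must track the shift-by-$c$ contributions carefully when invoking the logarithmic derivative lemma so that the union of exceptional sets coming from summation over $j$ retains finite logarithmic measure. The upgrade from inequality to equality, by contrast, is pure bookkeeping once one observes that \eqref{E1.1} supplies a matching upper bound at essentially no cost---this is precisely the sharpening highlighted by the authors in the passage preceding the theorem.
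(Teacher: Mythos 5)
Your proposal matches the paper's treatment of this statement: the paper presents it as a result cited from Laine--Tohge and adds only the observation that the original one-sided inequality upgrades to the equality \eqref{E4.7} because the first main theorem \eqref{E1.1}, together with $T=m+N$ and $m\geq 0$, gives $N(r,1_{o}\oslash(f\oplus a_{j}))\leq T(r,f)+O(1)$ for each $j$ under \eqref{E4.1}. Your two-sided sandwich is exactly this argument, with the substantive lower bound correctly deferred to the Laine--Tohge machinery and the matching upper bound obtained for free from \eqref{E1.1}.
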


Recently, Korhonen and Tohge \cite[Corollary 7.2]{korhonen-tohge-2016} said that from Theorem \ref{T0} they improved Theorem \ref{T2}  by dropping the assumption \eqref{E4.6}. However, we  find that there exists one gap. In \cite[Page 722]{korhonen-tohge-2016}, it was said that from the assumption \eqref{E4.1} (that is, (7.4) in \cite{korhonen-tohge-2016}) it follows that "the roots of $g_{0}$ are exactly the poles of $a_{k}\oplus f,$ counting multiplicity, for all $k=1,2\ldots, q.$" This is very true! But they continued to assert that by $f_{k}=(a_{k-1}\odot g_{0})\oplus (1_{o}\odot g_{1}),$
\begin{eqnarray*}
N(r, 1_{o}\oslash f_{k})=N(r, 1_{o}\oslash(f\oplus a_{k-1}))\,\,\,\,\,    \mbox{(see the (7.12) in \cite{korhonen-tohge-2016}})
\end{eqnarray*}for all $k=2,\ldots, q+1.$ In fact, this is not true! Firstly, $N(r, 1_{o}\oslash(f\oplus a_{k-1}))$ means the roots of $f\oplus a_{k-1}$ but not poles of $f\oplus a_{k-1};$ secondly, the roots of $g_{0}$ (which is the poles of $f=g_{1}\oslash g_{0}$ as in the statement in \cite{korhonen-tohge-2016}) is not equal to the roots of $f_{k}=(a_{k-1}\odot g_{0})\oplus (1_{o}\odot g_{1}),$ and hence it is not true of $N(r, 1_{o}\oslash g_{0})=N(r, 1_{o}\oslash f_{k}).$ Therefore, it is not reasonable enough to assert that $N(r, 1_{o}\oslash f_{k})=N(r, 1_{o}\oslash(f\oplus a_{k-1})).$ Below we will consider the relationship of the two counting functions to explain this.\par

Let $f=[f_{0}: f_{1}]:\mathbb{R}\rightarrow\mathbb{TP}^{1}$ be a tropical nonconstant meromorphic function, and let $a=[a_{1}: a_{0}]$ be a value of $\mathbb{TP}^{1}$ which defining a tropical polynomial $P(x)=(a_{0}\odot x_{0})\oplus (a_{1}\odot x_{1})$ on $\mathbb{R}^{2}.$ Then the polynomial $P(x)$  gives a tropical hyperplane $V_{P}=\{a\}.$ We can see that
\begin{eqnarray*}
P\circ f(x)&=&(a_{0}\odot f_{0}(x))\oplus (a_{1}\odot f_{1}(x))\\&=&((a_{0}-a_{1})\oplus (f_{1}(x)-f_{0}(x)))+(a_{1}+f_{0}(x))\\
&=&(a\oplus f(x))\odot(a_{1}+f_{0}(x)).
\end{eqnarray*} From this equalities, we only know \begin{eqnarray}\label{E90}N(r, 1_{o}\oslash (P\circ f))&\leq& N(r, 1_{o}\oslash (f\oplus a))+N(r, 1_{o}\oslash(a_{1}+f_{0}))\\\nonumber
&=&N(r, 1_{o}\oslash (f\oplus a))+N(r, f).\end{eqnarray} If discussing some special cases, then it is easy to get the following proposition.\par

\begin{proposition}\label{P3}
(I). If $f\oplus a\equiv f$ (for example, $a=-\infty=[1_{o}: 0_{o}]$), and thus $P\circ f(x)=f_{1}(x)+a_{1},$ then in this case we have
\begin{eqnarray*} N(r, 1_{o}\oslash (P\circ f))=N(r, 1_{o}\oslash (f_{1}+a_{1}))=N(r, 1_{o}\oslash (f\oplus a)). \end{eqnarray*}

(II). If $f\oplus a\equiv a$ (for example, $a=+\infty=[0_{o}: 1_{o}]$), and thus $P\circ f(x)=a+a_{1}+f_{0}(x),$ then in this case we have
$N(r, 1_{o}\oslash (P\circ f))=N(r, 1_{o}\oslash (a+a_{1}+f_{0}))=N(r, f)$ and $N(r, 1_{o}\oslash (f\oplus a))=0.$
\end{proposition}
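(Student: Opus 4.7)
The plan is to specialize the factorization
\[
P\circ f(x)=(a\oplus f(x))\odot(a_{1}\odot f_{0}(x)),
\]
which was derived in the paragraph immediately preceding the proposition, to each hypothesis, and then translate the zero counts of the resulting tropical entire functions into the desired counting functions for $f$ via the reduced representation $f=f_{1}\oslash f_{0}$.

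For part (I), substituting $f\oplus a\equiv f$ into the factorization gives
\[
P\circ f=f\odot(a_{1}\odot f_{0})=(f_{1}-f_{0})+a_{1}+f_{0}=f_{1}+a_{1},
\]
so the first equality $N(r,1_{o}\oslash(P\circ f))=N(r,1_{o}\oslash(f_{1}+a_{1}))$ is tautological. Since the additive real constant $a_{1}$ produces no new derivative discontinuity, I get $N(r,1_{o}\oslash(f_{1}+a_{1}))=N(r,1_{o}\oslash f_{1})$. The crucial step is then the multiplicity identity $\omega_{f}(x)=\omega_{f_{1}}(x)-\omega_{f_{0}}(x)$: because $f_{0},f_{1}$ are tropical entire we have $\omega_{f_{j}}(x)\geq 0$, and because they share no common zeros at each $x$ at most one of $\omega_{f_{0}}(x),\omega_{f_{1}}(x)$ is strictly positive. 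Consequently the zeros of $f_{1}$ coincide in location and multiplicity with the zeros of $f$, giving $N(r,1_{o}\oslash f_{1})=N(r,1_{o}\oslash f)=N(r,1_{o}\oslash(f\oplus a))$, where the last step uses $f\oplus a\equiv f$ once more.

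For part (II), the hypothesis $f\oplus a\equiv a$ makes $a\oplus f$ the real constant $a$, so the factorization collapses to
\[
P\circ f=a\odot(a_{1}\odot f_{0})=a+a_{1}+f_{0},
\]
and the zeros of $P\circ f$ therefore coincide with those of $f_{0}$. Applying the same multiplicity identity together with the no-common-zeros condition — which now identifies zeros of $f_{0}$ (with multiplicity $\omega_{f_{0}}(x)$) with poles of $f$ — yields $N(r,1_{o}\oslash(P\circ f))=N(r,1_{o}\oslash f_{0})=N(r,f)$. Finally, a constant function has no derivative discontinuities, so $N(r,1_{o}\oslash(f\oplus a))=N(r,1_{o}\oslash a)=0$. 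The argument is essentially computational; the only point with real content is the multiplicity bookkeeping via the no-common-zeros hypothesis on the reduced representation, which is a standard consequence of that hypothesis rather than a genuine obstacle.
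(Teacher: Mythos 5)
Your argument is correct and follows exactly the route the paper intends: the paper derives the factorization $P\circ f=(a\oplus f)\odot(a_{1}\odot f_{0})$ immediately before the proposition and then declares the special cases "easy" without writing out the details. Your multiplicity bookkeeping ($\omega_{f}=\omega_{f_{1}}-\omega_{f_{0}}$, convexity of tropical entire functions, and the no-common-zeros condition of the reduced representation) is precisely the content that makes those identities of counting functions valid.
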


By Corollary \ref{C0} (or Theorem \ref{T1}), we get directly a new tropical version of Nevanlinna's second main theorem.\par

\begin{theorem}\label{T3} Assume that $f=[f_{0}: f_{1}]:\mathbb{R}\rightarrow\mathbb{TP}^{1}$ is a nonconstant tropical meromorphic function with $$\limsup_{r\rightarrow\infty}\frac{\log T_{f}(r) }{r}=0,$$ and $a_{j}=[a_{j1}: a_{j0}]$ $(j=1, \ldots, q)$ are distinct values of $\mathbb{TP}^{1}$ which defining tropical polynomials $P_{j}(x)=a_{j0}\odot x_{0}\oplus a_{j1}\odot x_{1}$ on $\mathbb{R}^{2},$ respectively. If $ddg (\{P_{3}\circ f, \ldots, P_{q}\circ f\})=\lambda,$ then
\begin{eqnarray}\label{E7.1}&&(q-2-\lambda)T_{f}(r)\\\nonumber
&\leq&\sum_{j=1}^{q}N\left(r, \frac{1_{o}}{P_{j}\circ f}\oslash\right)-N\left(r, \frac{1_{o}}{C_{o}(P_{1}\circ f, P_{2}\circ f)}\oslash \right)+o\left(T_{f}(r)\right)\\\nonumber
&\leq&\sum_{j=3}^{q}N\left(r, \frac{1_{o}}{P_{j}\circ f}\oslash\right)+o\left(T_{f}(r)\right)\\\nonumber
&\leq&(q-2)T_{f}(r)+o\left(T_{f}(r)\right),\end{eqnarray}
where $r$ approaches infinity outside an exceptional set of zero upper density measure. In special case whenever $\lambda=0,$ \begin{eqnarray}\label{E7.2}&&(q-2)T_{f}(r)\\\nonumber&=&
\sum_{j=1}^{q}N\left(r, \frac{1_{o}}{P_{j}\circ f}\oslash\right)-N\left(r, \frac{1_{o}}{C_{o}(P_{1}\circ f, P_{2}\circ f)}\oslash \right)+o\left(T_{f}(r)\right)\\\nonumber
&=&\sum_{j=3}^{q}N\left(r, \frac{1_{o}}{P_{j}\circ f}\oslash\right)+o\left(T_{f}(r)\right),\end{eqnarray}
where $r$ approaches infinity outside an exceptional set of zero upper density measure.
\end{theorem}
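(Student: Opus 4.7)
The plan is to obtain Theorem~\ref{T3} as an immediate specialization of Corollary~\ref{C0} (equivalently, of Theorem~\ref{T1}) to the one-dimensional tropical projective space. With $n=1$ and every $d_j=1$, the combinatorial bookkeeping collapses nicely: one has $M=\binom{n+d}{d}-1=1$, so that the first $M+1=n+1=2$ polynomials $P_1,P_2$ produce the Casoratian $C_o(P_1\circ f,P_2\circ f)$, and the tail $\{P_3\circ f,\ldots,P_q\circ f\}$ is exactly the set whose degree of degeneracy is $\lambda$. The numerical constant $q-n-1-\lambda$ in Corollary~\ref{C0} therefore becomes $q-2-\lambda$, matching the left-hand side of \eqref{E7.1}.

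The one genuine verification needed before quoting Corollary~\ref{C0} is the tropical linear nondegeneracy of $f=[f_0:f_1]$. By Proposition~\ref{P1} this is equivalent to asking that $f_0$ and $f_1$ be linearly independent in the Gondran--Minoux sense. If they were dependent, there would exist $\alpha,\beta\in\mathbb{R}_{\max}$, not both $0_o$, with $\alpha\odot f_0\equiv\beta\odot f_1$; after reducing the trivial case in which one of $\alpha,\beta$ equals $0_o$ (which would force $f\equiv[1_o:0_o]$ or $f\equiv[0_o:1_o]$, contradicting nonconstancy of $f$ as a tropical meromorphic function), this forces $f_1-f_0\equiv\beta-\alpha$ to be a real constant, so $f=f_1\oslash f_0$ is constant, contrary to hypothesis. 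Hence $f$ is tropical linearly nondegenerate, and the Gondran--Minoux hypothesis in Corollary~\ref{C0} is satisfied.

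Next I would observe that each $P_j(x_0,x_1)=a_{j0}\odot x_0\oplus a_{j1}\odot x_1$ is a homogeneous tropical polynomial of degree one, so the $V_{P_j}$ are tropical hyperplanes in $\mathbb{TP}^1$, again fitting the hypotheses of Corollary~\ref{C0} verbatim. The growth assumption $\limsup_{r\to\infty}\log T_f(r)/r=0$ is exactly the assumption in Corollary~\ref{C0}, and the degeneracy count $\lambda=ddg(\{P_3\circ f,\ldots,P_q\circ f\})$ is the case $ddg(\{P_{n+2}\circ f,\ldots,P_q\circ f\})$ when $n=1$. Substituting $n=1$ into the two chains of inequalities at the end of Corollary~\ref{C0} yields \eqref{E7.1}; the equality chain there, specialized to $\lambda=0$, gives \eqref{E7.2}.

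The main potential obstacle is not computational but interpretive: the counting function $N(r,1_o\oslash(P_j\circ f))$ appearing here is the root-counting function of the tropical entire function $P_j\circ f$, and, as the discussion around \eqref{E90} and Proposition~\ref{P3} emphasizes, it differs in general from $N(r,1_o\oslash(f\oplus a_j))$. For this reason I would make no attempt in the proof to rewrite the Casoratian or the $P_j\circ f$ terms in ``pointwise'' notation $f\oplus a_j$; the statement of Theorem~\ref{T3} is deliberately phrased in terms of $P_j\circ f$, precisely to apply Corollary~\ref{C0} without invoking the incorrect identification that, according to the preceding discussion, appears as a gap in \cite{korhonen-tohge-2016}. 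With that care, the proof is essentially a one-line appeal to Corollary~\ref{C0}.
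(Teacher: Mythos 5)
Your proposal is correct and follows essentially the same route as the paper: the paper's proof of Theorem \ref{T3} is likewise a direct appeal to Corollary \ref{C0}, with the only substantive verification being that the nonconstancy of $f=f_1\oslash f_0$ forces $f_0,f_1$ to be linearly independent in the Gondran--Minoux sense (hence $f$ is tropical linearly nondegenerate by Proposition \ref{P1}). Your treatment of the degenerate case where one of the coefficients equals $0_o$ is in fact slightly more careful than the paper's, which passes over it silently.
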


\begin{proof} Due to $f$ tropical meromorphic on $\mathbb{R},$ we may assume $f=f_{1}\oslash f_{0}=[f_{0}: f_{1}]: \mathbb{R}\rightarrow \mathbb{TP}^{1}.$ According to Corollary \ref{C0}, we only need prove that $f$ is linearly independent in Gondran-Minoux sense (or say, tropical linearly nondegenerated).
Otherwise, there exist two nonempty sets $I, J$ with $I\cap J=\emptyset$ and $I\cup J=\{0, 1\}$  such that
\begin{eqnarray*}
\bigoplus_{i\in I} b_{i}\odot f_{i}=\bigoplus_{j\in J} b_{j}\odot f_{j},
\end{eqnarray*} that is, there exist two values $b_{0}, b_{1}\in\mathbb{R}_{\max}$ such that $b_{0}\odot f_{0}=b_{1}\odot f_{1}.$ This means $f=f_{1}\oslash f_{0}=b_{0}\oslash b_{1},$ which contradicts to the assumption that $f$ is nonconstant. \end{proof}

Next we give a sufficient condition about $\lambda=0$ in the above theorem.\par

\begin{proposition}\label{P2} Assume that $f=[f_{0}: f_{1}]:\mathbb{R}\rightarrow\mathbb{TP}^{1}$ is a nonconstant tropical meromorphic function with and $a_{j}=[a_{j1}: a_{j0}]$ $(j=1, \ldots, q)$ are distinct finite values of $\mathbb{R}$ which defining tropical polynomials $P_{j}(x)=a_{j0}\odot x_{0}\oplus a_{j1}\odot x_{1}$ on $\mathbb{R}^{2},$ respectively. If $f\not\equiv(f\oplus a_{j})\not\equiv a_{j}$ for all $j=1,2\ldots, q,$  then $\lambda:=ddg (\{P_{1}\circ f, \ldots, P_{q}\circ f\})=0.$\end{proposition}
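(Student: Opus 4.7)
The strategy is to show that each tropical linear combination $P_j\circ f = (a_{j0}\odot f_0)\oplus(a_{j1}\odot f_1)$ is complete as an element of the span of $\{f_0,f_1\}$ in the Gondran--Minoux sense, i.e.\ $\hat\ell(P_j\circ f)=2$. Since by definition $ddg(\{P_1\circ f,\ldots,P_q\circ f\})$ counts precisely the non-complete elements, establishing completeness for every $j$ immediately gives $\lambda=0$. Because each $a_{j0},a_{j1}\in\mathbb{R}$ is finite, $P_j\circ f$ automatically admits a two-term expression; what remains is to rule out that it can also be written as a single monomial $b\odot f_0$ or $b\odot f_1$ for some $b\in\mathbb{R}$.

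The key tool is the factorization identity derived in the excerpt just before Proposition \ref{P3}, namely
\[
P_j\circ f \;=\; (a_j\oplus f)\odot(a_{j1}\odot f_0),
\]
where $a_j=a_{j0}-a_{j1}$ and $f=f_1\oslash f_0$. Suppose first that $P_j\circ f\equiv b\odot f_0$ for some $b\in\mathbb{R}$. Cancelling $f_0$ in the above identity yields $\max\{a_j,f(x)\}\equiv b-a_{j1}$, a real constant $K$. The case $K<a_j$ is impossible; the case $K=a_j$ forces $f(x)\le a_j$ for all $x$, contradicting $f\oplus a_j\not\equiv a_j$; and the case $K>a_j$ forces $f\equiv K$, contradicting the nonconstancy of $f$.

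Next suppose $P_j\circ f\equiv b\odot f_1$. After cancelling $f_1$ and rearranging, one obtains $\max\{a_j-f(x),0\}\equiv b-a_{j1}$, a nonnegative constant $C$. If $C=0$ then $f(x)\ge a_j$ everywhere, so $f\oplus a_j\equiv f$, contradicting the hypothesis; if $C>0$ then necessarily $a_j-f(x)=C$ identically, hence $f\equiv a_j-C$ is constant, again a contradiction. Hence $\hat\ell(P_j\circ f)=2$ for every $j$, and $\lambda=0$.

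I do not anticipate a genuine obstacle here; the entire content of the argument is packaged in the factorization $P_j\circ f=(a_j\oplus f)\odot(a_{j1}\odot f_0)$, after which the two hypotheses $f\oplus a_j\not\equiv a_j$ and $f\oplus a_j\not\equiv f$ exactly block the two possible collapses of the two-term expression, with nonconstancy of $f$ mopping up the remaining boundary case. The only point requiring care is keeping the tropical-versus-classical signs straight when cancelling $f_0$ or $f_1$ from the identity.
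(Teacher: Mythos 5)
Your proof is correct and follows essentially the same route as the paper's: assume some $P_j\circ f$ collapses to a single tropical monomial in $f_0$ or $f_1$ and derive a contradiction from $f\oplus a_j\not\equiv a_j$, $f\oplus a_j\not\equiv f$, and the nonconstancy of $f$. You are in fact slightly more careful than the paper, which immediately writes the collapsed form with the original coefficients $a_{j0}$ or $a_{j1}$, whereas you allow an arbitrary coefficient $b\in\mathbb{R}$ and dispose of the extra cases ($K>a_j$, $C>0$) via nonconstancy.
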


\begin{proof} Assume $\lambda>0.$ Then there exists at leat one of $\{P_{1}\circ f, \ldots, P_{q}\circ f\},$ say $P_{k}\circ f,$ satisfying $\ell(P_{k}\circ f)<2$ by the definition of the degree of degeneracy. This implies that either $P_{k}\circ f\equiv a_{k0}\odot f_{0}$ or
$P_{k}\circ f\equiv a_{k1}\odot f_{1}.$ Thus we have either
$(a_{k0}+ f_{0})\oplus (a_{k1}+ f_{1})\equiv a_{k0}+ f_{0}$ or
$(a_{k0}+ f_{0})\oplus (a_{k1}+ f_{1})\equiv a_{k1}+ f_{1},$
which contradict either $f\oplus a_{k}\not\equiv a_{k}$ or $f\oplus a_{k}\not\equiv f$ respectively.
\end{proof}

Then we obtain a tropical Nevanlinna theorem which is very similar to Theorem \ref{T2}. But the counting functions are very different from each other. Furthermore,  here we only need one assumption $f\oplus a_{j}\not\equiv a_{j}$ which means that it demands only at least one point $x_{0}$ such that $f(x_{0})>a_{j}$ for each $a_{j}.$ Thus this assumption is obviously better than the condition \eqref{E4.1} in Theorem \ref{T2} due to Laine and Tohge \cite{laine-tohge} which implies that the value $f(x)$ at each poles should be large strictly than $\max\{a_{1}, \ldots, a_{q}\}$.  \par

\begin{theorem}\label{T6} Assume that $f$ is a nonconstant tropical meromorphic function with $$\limsup_{r\rightarrow\infty}\frac{\log T_{f}(r)}{r}=0,$$ and $a_{j}$ $(j=1, 2, \ldots, q)$ defining  tropical polynomial $P_{j}$ are distinct values of $\mathbb{R}$  such that $f\oplus a_{j}\not\equiv a_{j},$ respectively.  Then
\begin{eqnarray*}qT_{f}(r)=\sum_{j=1}^{q}N\left(r, \frac{1_{o}}{P_{j}\circ f}\oslash\right)+o\left(T_{f}(r)\right)
,\end{eqnarray*}
where $r$ approaches infinity outside an exceptional set of zero upper density measure.
\end{theorem}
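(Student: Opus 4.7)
My plan is to deduce Theorem~\ref{T6} directly from the First Main Theorem (Theorem~\ref{T4}) together with a uniform pointwise bound on the tropical Weil function. Since $f$ is nonconstant and each $a_j\in\mathbb{R}$ is finite, the hyperplane $V_{P_j}=\{a_j\}$ in $\mathbb{TP}^1$ cannot contain $f(\mathbb{R})$, so Theorem~\ref{T4} with $d_j=1$ applies to every $j$ and yields
\begin{equation*}
m_f(r,V_{P_j}) + N(r,\,1_o\oslash (P_j\circ f)) = T_f(r) + O(1),\qquad j=1,\dots,q.
\end{equation*}
Summing these identities reduces the theorem to showing $m_f(r,V_{P_j})=o(T_f(r))$ for each $j$, and I plan to prove the stronger statement that $m_f(r,V_{P_j})=O(1)$.

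For the uniform bound I would work pointwise. With a reduced representation $(f_0,f_1)$ of $f$, one has
\begin{equation*}
\lambda_{V_{P_j}}(f(x)) = \max(f_0(x),f_1(x)) + \max(a_{j,0},a_{j,1}) - \max(a_{j,0}+f_0(x),\,a_{j,1}+f_1(x)),
\end{equation*}
and the sum of the first two maxima equals $\max_{i,k\in\{0,1\}}(f_i(x)+a_{j,k})$ over all four index pairs, while the last maximum ranges only over the two diagonal pairs $(0,0)$ and $(1,1)$. The gap between these two maxima is nonzero only when one of the off-diagonal terms $f_0(x)+a_{j,1}$ or $f_1(x)+a_{j,0}$ strictly exceeds both diagonal terms, and a short four-case analysis based on the signs of $f_1(x)-f_0(x)$ and $a_{j,1}-a_{j,0}$ shows that this gap is always at most $|a_{j,1}-a_{j,0}|$. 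Thus $0\le \lambda_{V_{P_j}}(f(x))\le |a_{j,1}-a_{j,0}|$ pointwise, which gives $m_f(r,V_{P_j})=O(1)$.

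Combining these two steps, $N(r,1_o\oslash(P_j\circ f)) = T_f(r)+O(1)$ for each $j$, and summing over $j=1,\dots,q$ produces $\sum_{j=1}^q N(r,1_o\oslash(P_j\circ f))=q\,T_f(r)+O(1)$, which is in fact stronger than the stated $o(T_f(r))$ error (since $T_f(r)\to\infty$ for nonconstant $f$, no exceptional set is needed at all). The main obstacle I anticipate is bookkeeping rather than analysis: one must track the paper's convention relating the projective point $a_j=[a_{j,1}:a_{j,0}]$ to its real value and to the coefficients of $P_j(x)=a_{j,0}\odot x_0\oplus a_{j,1}\odot x_1$, and then verify the pointwise $\lambda$-bound in each sub-case. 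If instead one wishes to route the proof through Theorem~\ref{T3}, the main difficulty shifts to handling indices $j$ with $f\oplus a_j\equiv f$, which force $\ell(P_j\circ f)=1$ and hence contribute to $ddg(\{P_3\circ f,\dots,P_q\circ f\})$; these indices would require separate treatment (for example, via the tropical Jensen formula applied to the entire function $f_1$), followed by an application of Theorem~\ref{T3} on the remaining non-degenerate indices.
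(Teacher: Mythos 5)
Your proof is correct, but it takes a genuinely different and far more elementary route than the paper. The paper proves Theorem~\ref{T6} by splitting the indices into the set $S$ where $f\oplus a_k\equiv f$ (handled via the scalar first main theorem \eqref{E1.1} together with Proposition~\ref{P3}) and its complement (handled via Theorem~\ref{T3} and Proposition~\ref{P2}, i.e.\ via the full second-main-theorem machinery, which is where the growth hypothesis and the exceptional set enter). You instead observe that for a hyperplane in $\mathbb{TP}^1$ with finite coefficients the Weil function satisfies the uniform pointwise bound $0\le\lambda_{V_{P_j}}(f(x))\le|a_{j1}-a_{j0}|$ — your case analysis is right: if an off-diagonal term such as $f_1+a_{j0}$ realizes the four-term maximum then $a_{j0}\ge a_{j1}$ and $f_1\ge f_0$, so the diagonal maximum is at least $f_1+a_{j1}$ and the gap is at most $a_{j0}-a_{j1}$ — whence $m_f(r,V_{P_j})=O(1)$ and Theorem~\ref{T4} gives $N(r,1_o\oslash(P_j\circ f))=T_f(r)+O(1)$ for each $j$. (Equivalently, since $P_j\circ f$ is tropical entire and $P_j\circ f-\|f\|$ is pinned between $\min(a_{j0},a_{j1})$ and $\max(a_{j0},a_{j1})$, the Jensen formula \eqref{E1.0} gives this directly.) What your approach buys is substantial: an $O(1)$ error instead of $o(T_f(r))$, no exceptional set, and no use of either the growth hypothesis $\limsup_r\frac{\log T_f(r)}{r}=0$ or the conditions $f\oplus a_j\not\equiv a_j$. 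One caveat worth flagging: your stronger conclusion forces $\delta_f(V_{P_j})=0$ unconditionally for finite values, which is in tension with the Remark following Theorem~\ref{C1}, where the paper claims $\delta_{e_\beta}(a)\ge\frac12$ for the hyperorder-one example; that deduction in the paper appears to drop the $N(r,f)$ term from \eqref{E90}, so the discrepancy is on the paper's side rather than yours, but you should be aware that your argument quietly contradicts that remark.
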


\begin{proof} If $f\oplus a_{j}\not\equiv f$ is not satisfied for all $j=1,2, \ldots, q.$ Suppose there exists one nonempty subset $S$ of $\{1, 2, \ldots, q\}$ such that $f\oplus a_{k}\equiv f$ for all $k\in S,$ and $f\oplus a_{i}\not\equiv f$ for all $i\in \{1, 2, \ldots, q\}\setminus S.$ Then for $k\in S,$
we have
\begin{eqnarray*}T_{f}(r)&=&T(r, f)+O(1)\\&=&T(r,1_{o}\oslash f)+O(1)\\&=&T(r,1_{o}\oslash (f\oplus a_{k}))+O(1)\\&=&N(r, 1_{o}\oslash (f\oplus a_{k}))+m(r, -(f\oplus a_{k}))+O(1)\\
&\leq& N(r, 1_{o}\oslash(f\oplus a_{k}))+\max\{-a_{k}, 0\}+O(1)\\
&=& N(r, 1_{o}\oslash(f\oplus a_{k}))+O(1)\end{eqnarray*}
Further, by (II) of Proposition \ref{P3} we have
\begin{eqnarray*}N(r, 1_{o}\oslash(P_{k}\circ f))=N(r, 1_{o}\oslash(f\oplus a_{k})).\end{eqnarray*} Hence
\begin{eqnarray}\label{E7.3}\#ST_{f}(r)&\leq&\sum_{k\in S}N(r, 1_{o}\oslash(P_{k}\circ f))+O(1)\\\nonumber
&\leq&\#ST_{f}(r)+O(1)\end{eqnarray}where the last inequality comes from the tropical first main theorem (that is, Theorem \ref{T4}).\par

On the other hand, by Theorem \ref{T3} and Proposition \ref{P2}, we get that  for $i\in \{1, 2, \ldots, q\}\setminus S,$ \begin{eqnarray}\label{E7.4}(q-\#S)T_{f}(r)&=&\sum_{i\in \{1, 2, \ldots, q\}\setminus S}N\left(r, 1_{o}\oslash(P_{i}\circ f)\right)+o\left(T_{f}(r)\right),\end{eqnarray}
where $r$ approaches infinity outside an exceptional set of zero upper density measure. Therefore the corollary is obtained immediately from \eqref{E7.3} and \eqref{E7.4}.
\end{proof}

By Theorem \ref{T3} and Theorem \ref{T6}, we have defect relation as follows. Define that a value $a$ is called a Picard exceptional value of $f$ if $\delta_{f}(a)=1.$ Hence by (ii) below, we get that all finite real values are the Picard exceptional values of a nonconstant tropical meromorphic function. \par

\begin{theorem}\label{C1} Assume that $f=[f_{0}: f_{1}]:\mathbb{R}\rightarrow\mathbb{TP}^{1}$ is a nonconstant tropical meromorphic function with $\limsup_{r\rightarrow\infty}\frac{\log T_{f}(r)}{r}=0.$ \par

(i) Let $a_{j}=[a_{j1}: a_{j0}]$ $(j=1, \ldots, q)$ be distinct values of $\mathbb{TP}^{1}$ which defining tropical polynomials $P_{j}(x)=a_{j0}\odot x_{0}\oplus a_{j1}\odot x_{1}$ on $\mathbb{R}^{2},$ respectively. If $ddg (\{P_{3}\circ f, \ldots, P_{q}\circ f\})=\lambda,$ then $\sum_{j=3}^{q}\delta_{f}(a_{j})=\lambda$ for all $a_{j}$ $(j=3, 4, \ldots, q).$ In special case whenever $\lambda=0,$ $\delta_{f}(a_{j})=0$ holds for each $a_{j}$ $(j\in\{3,4,\ldots, q\}.$\par

(ii) $$\delta_{f}(a)=0$$ holds for each $a\in \mathbb{R}$ such that $f\oplus a\not\equiv a.$
\end{theorem}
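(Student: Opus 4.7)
The plan is to deduce both parts as direct corollaries of the tropical second main theorems proved earlier (Theorem \ref{T3} for part (i), Theorem \ref{T6} for part (ii)) together with the tropical first main theorem (Theorem \ref{T4}); the work amounts to unwinding the defect definition and handling the exceptional sets correctly.

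For part (i), I would invoke Theorem \ref{T3} applied to the $q$ values $a_1,\ldots,a_q$, which yields
\begin{equation*}
(q-2-\lambda)T_f(r) \leq \sum_{j=3}^q N\!\left(r, \frac{1_o}{P_j\circ f}\oslash\right) + o(T_f(r))
\end{equation*}
for $r$ running to infinity outside an exceptional set $E$ with $\overline{dens}\,E=0$. Dividing by $T_f(r)$ and taking $\limsup$ along $r\notin E$ (which is permissible as a lower bound for the unrestricted $\limsup$, since $E^c$ meets every neighborhood of $\infty$), and using the standard fact $\limsup\sum \leq \sum\limsup$, I obtain
\begin{equation*}
\sum_{j=3}^q \limsup_{r\to\infty}\frac{N(r, 1_o\oslash(P_j\circ f))}{T_f(r)} \;\geq\; q-2-\lambda.
\end{equation*}
Unwinding the definition $\delta_f(a_j)=1-\limsup N(r,1_o\oslash(P_j\circ f))/T_f(r)$ gives $\sum_{j=3}^q \delta_f(a_j) \leq \lambda$. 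By Theorem \ref{T4} specialized to $d_j=1$, we have $N(r,1_o\oslash(P_j\circ f))\leq T_f(r)+O(1)$, so each individual defect lies in $[0,1]$; in the special case $\lambda=0$, nonnegativity forces every $\delta_f(a_j)$ to vanish.

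For part (ii), I would apply Theorem \ref{T6} with a single value ($q=1$) equal to the given $a$. Under the hypothesis $f\oplus a\not\equiv a$, that theorem delivers
\begin{equation*}
T_f(r) = N\!\left(r, \frac{1_o}{P\circ f}\oslash\right) + o(T_f(r))
\end{equation*}
for $r$ outside an exceptional set of zero upper density. Combined with the upper bound $N(r,1_o\oslash(P\circ f)) \leq T_f(r)+O(1)$ from Theorem \ref{T4}, this pins $\limsup_{r\to\infty} N(r,1_o\oslash(P\circ f))/T_f(r)=1$, and hence $\delta_f(a)=0$ by definition.

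There is no substantive obstacle beyond these reductions; the only mild point requiring care is that the second main theorems only hold outside an exceptional set of zero upper density rather than for all $r$. Since $\overline{dens}\,E=0$ implies the complement $E^c$ has upper density one, the $\limsup$ of any bounded ratio along $r\to\infty$ is bounded below by its $\limsup$ along $E^c$, which is all that is needed for the defect estimates above.
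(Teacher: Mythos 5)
Your proposal is correct and coincides with the paper's own (implicit) derivation: Theorem \ref{C1} is presented there as an immediate consequence of Theorem \ref{T3}, Theorem \ref{T6} and the first main theorem (Theorem \ref{T4}), exactly as you argue, including the observation that a zero-upper-density exceptional set leaves an unbounded complement along which the $\limsup$ can be taken. The one discrepancy is in part (i): your argument yields $\sum_{j=3}^{q}\delta_{f}(a_{j})\leq\lambda$ rather than the printed equality, but that inequality is all Theorem \ref{T3} actually supplies (and it matches the ``$\leq$'' form of Theorem \ref{T5}), so the overstatement lies in the paper's statement rather than in your proof.
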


\begin{remark} Let $\alpha$  be a real number with $|\alpha|>1.$ Define a tropical meromorphic  function $e_{\alpha}(x)$ on $\mathbb{R}$ by
$$e_{\alpha}(x):=\alpha^{[x]}(x-[x])+\sum_{j=-\infty}^{[x]-1}=\alpha^{[x]}(x-[x]+\frac{1}{\alpha-1}).$$ Similarly, if $\beta$ is a real number with $|\beta|<1,$ the corresponding tropical function defined as
$$e_{\beta}(x):=\beta^{[x]}(\frac{1}{1-\beta}-x+[x]).$$ By \cite[Proposition 1.22 and Proposition 1.24]{korhonen-laine-tohge}, both $e_{\alpha}(x)$ and $e_{\beta}(x)$ are of hyperorder one and  $\limsup_{r\rightarrow\infty}\frac{\log T_{e_{\alpha}}(r)}{r}=\limsup_{r\rightarrow\infty}\frac{\log T_{e_{\beta}}(r)}{r}=1\neq 0.$ By \cite[Reamrk 1.25]{korhonen-laine-tohge},  we have $$1-\limsup_{r\rightarrow\infty}\frac{N(r, 1_{o}\oslash(e_{\beta(x)}\oplus a))}{T_{e_{\beta}(x)}(r)}\geq \frac{1}{2}>0$$ for all $a<0.$ Then by \eqref{E90} and the tropical first main theorem, we get $\delta_{e_{\beta}}(a)\geq\frac{1}{2}>0$ for all $a<0.$ This means that the assumption $\limsup_{r\rightarrow\infty}\frac{\log T_{f}(r) }{r}=0$ cannot be deleted in (ii) of Theorem \ref{C1}.
\end{remark}

Recall that the Classical Nevanlinna second main theorem for meromorphic functions on the complex plane $\mathbb{C}$ have a truncated form as follows \begin{equation*}
(q-2)T_{f}(r)\leq N^{1)}(r, \frac{1}{f-a_{j}})+o(T_{f}(r))
\end{equation*}holds for all $r$ possibly outside a set with finite linear measure, where $a_{1}, \ldots, a_{q}$ are distinct values in $\mathbb{P}^{1}(\mathbb{C})=\mathbb{C}\cup\{\infty\}$ and $N^{1)}(r,\frac{1}{f-a_{j}})$ is the counting function of zeros of $f-a_{j}$ with multiplicities truncated by one (or say ignoring multiplicities). Now let $f$ be a tropical meromorphic function and let $a$ be a value in $\mathbb{TP}^{1}$ which defining a tropical linear polynomial $P$ in $\mathbb{TP}^{1}.$ Then it might be interesting to consider the truncated counting function $N^{1)}(r, \frac{1_{o}}{P_{j}\circ}\oslash)$ corresponding to $N(r, \frac{1_{o}}{P_{j}\circ f}\oslash)$ ignoring the multiplicities of roots of $P_{j}\circ f$ in our results on the tropical Nevanlinna second main theorem (Theorem \ref{T3} and Theorem \ref{T6}). We give a conjecture as follows. \par

\begin{conjecture} Assume that $f$ is a nonconstant tropical meromorphic function with $\limsup_{r\rightarrow\infty}\frac{\log T_{f}(r)}{r}=0.$\par

(i). Let $a_{j}$ $(j=1, \ldots, q)$ be distinct values of $\mathbb{TP}^{1}$ which defining tropical linear polynomials $P_{j}(x)$ on $\mathbb{R}^{2},$ respectively. If $\lambda=ddg (\{P_{1}\circ f, \ldots, P_{q}\circ f\}),$ then
\begin{eqnarray*}&&(q-2-\lambda)T_{f}(r)
\leq\sum_{j=3}^{q}N^{1)}(r, \frac{1_{o}}{P_{j}\circ f}\oslash)+o\left(T_{f}(r)\right)
,\end{eqnarray*}
where $r$ approaches infinity outside an exceptional set of zero upper density measure.\par

(ii). Let $a_{j}$ $(j=1, \ldots, q)$ be distinct finite values in $\mathbb{R}$ which defining tropical linear polynomials $P_{j}(x)$ on $\mathbb{R}^{2}$ such that $f\oplus a_{j}\not\equiv a_{j},$ respectively. Then
\begin{eqnarray*}&& qT_{f}(r)
=\sum_{j=1}^{q}N^{1)}(r, \frac{1_{o}}{P_{j}\circ f}\oslash)+o\left(T_{f}(r)\right)
,\end{eqnarray*}
where $r$ approaches infinity outside an exceptional set of zero upper density measure.
\end{conjecture}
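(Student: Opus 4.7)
The plan is to lift the equality forms of Theorem~\ref{T3} and Theorem~\ref{T6} to their truncated counterparts by establishing a tropical ramification inequality for the Casoratian, and then to deduce part~(ii) from (i) in exact parallel with the existing proof of Theorem~\ref{T6}. Starting from the equality
\begin{equation*}
(q-2-\lambda)T_{f}(r)\le\sum_{j=1}^{q}N\!\left(r,\tfrac{1_{o}}{P_{j}\circ f}\oslash\right)-N\!\left(r,\tfrac{1_{o}}{C_{o}(P_{1}\circ f,P_{2}\circ f)}\oslash\right)+o(T_{f}(r))
\end{equation*}
of Theorem~\ref{T3}, the task would reduce to showing
\begin{equation*}
N(r,1_{o}\oslash C_{o}(P_{1}\circ f,P_{2}\circ f))\ge N(r,1_{o}\oslash(P_{1}\circ f))+N(r,1_{o}\oslash(P_{2}\circ f))+\sum_{j=3}^{q}\bigl[N(r,\cdot)-N^{1)}(r,\cdot)\bigr]+o(T_{f}(r)),
\end{equation*}
i.e.\ that the Casoratian already carries all the excess multiplicity at corners of $P_{j}\circ f$ for $j\ge 3$ on top of the full multiplicities for $j=1,2$ that are absorbed in the existing proof.

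The heart of the argument would be a pointwise tropical ramification claim: for every $j\in\{3,\dots,q\}$ and every corner $x_{0}$ of $P_{j}\circ f$ of ``pure exchange'' type (the max in $P_{j}\circ f$ switches from $a_{j0}\odot f_{0}$ to $a_{j1}\odot f_{1}$ at $x_{0}$ without $f_{0}$ or $f_{1}$ having a corner there) with multiplicity $\omega>0$, one has
\begin{equation*}
\omega_{C_{o}(P_{1}\circ f,P_{2}\circ f)}(x_{0})\ge \omega-1.
\end{equation*}
This is the tropical image of the classical fact that the Wronskian $W(f_{0},f_{1})$ vanishes to order $m-1$ at a zero of order $m$ of \emph{any} linear combination $a_{0}f_{0}+a_{1}f_{1}$: since $P_{j}\circ f$ is a tropical linear combination of $f_{0},f_{1}$, its pure-exchange corners reflect oscillation of $f_{0}-f_{1}$ across the value $a_{j1}-a_{j0}$, and a direct piecewise-linear computation of
\begin{equation*}
C_{o}(P_{1}\circ f,P_{2}\circ f)(x)=(P_{1}\circ f)(x)\odot(P_{2}\circ f)(x+c)\oplus(P_{2}\circ f)(x)\odot(P_{1}\circ f)(x+c)
\end{equation*}
for $c$ avoiding a countable bad set should show that the slope-jump inherited by the tropical max at $x_{0}$ is at least $\omega-1$, the unit loss being the tropical ramification defect. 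Corners of ``inherited'' type (coming from corners of $f_{0}$ or $f_{1}$) are then already absorbed by the baseline inequality $N(C_{o})\ge N(P_{1}\circ f)+N(P_{2}\circ f)+o(T_{f}(r))$ extracted from the present proof of Theorem~\ref{T3} via \eqref{E6.3}.

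The principal obstacle is the combinatorial bookkeeping at inherited corners, where many $P_{j}\circ f$ can simultaneously develop corners of different multiplicities at the same point with different choices of dominating summand; one must show that the total Casoratian slope-jump at such a point is at least the required sum of full multiplicities for $j\in\{1,2\}$ and truncated excesses $\omega-1$ for $j\ge 3$, which appears to require a Nochka-type weighting scheme. A related subtlety is that the shift $c$ is fixed in advance while the local ramification argument wants $c$ to avoid countably many degeneracies; this should be repairable by averaging over small perturbations of $c$, analogous to the handling of generic shifts in the proof of tropical Cartan theory by Korhonen and Tohge. Once these two points are settled, the conjecture reduces to the clean chain ``Theorem~\ref{T3} $+$ ramification inequality $+$ Theorem~\ref{T6}'', paralleling the classical Cartan--Nochka passage from the untruncated to the truncated second main theorem.
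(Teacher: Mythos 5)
First, a point of scope: the statement you are proving is stated in the paper as a \emph{conjecture}; the paper offers no proof of it, so there is nothing to compare your argument against except the paper's untruncated results (Theorem \ref{T3}, Theorem \ref{T6}) that you correctly identify as the natural starting point. Your text is in any case a plan rather than a proof --- you yourself flag the combinatorial bookkeeping at inherited corners and the choice of the shift $c$ as unresolved --- so it cannot be accepted as settling the conjecture.

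More seriously, the central lemma on which the whole reduction rests appears to be false. You claim that at a pure-exchange corner $x_{0}$ of $P_{j}\circ f$ ($j\geq 3$) of multiplicity $\omega$, one has $\omega_{C_{o}(P_{1}\circ f,P_{2}\circ f)}(x_{0})\geq \omega-1$. At such a point $f_{0}$ and $f_{1}$ are linear, and $\omega=|f_{1}'(x_{0})-f_{0}'(x_{0})|$, which can be any positive real. But
\begin{equation*}
C_{o}(P_{1}\circ f,P_{2}\circ f)(x)=\max\bigl\{(P_{1}\circ f)(x)+(P_{2}\circ f)(x+c),\ (P_{2}\circ f)(x)+(P_{1}\circ f)(x+c)\bigr\}
\end{equation*}
depends near $x_{0}$ only on $P_{1}\circ f$ and $P_{2}\circ f$ at $x_{0}$ and $x_{0}+c$. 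Generically neither of these has a corner at $x_{0}$ or $x_{0}+c$ (their corners lie on the level sets $f_{0}-f_{1}=a_{11}-a_{10}$ and $f_{0}-f_{1}=a_{21}-a_{20}$, which are different from $f_{0}-f_{1}=a_{j1}-a_{j0}$), so both branches of the max are linear near $x_{0}$ and the Casoratian is either linear at $x_{0}$ or has a corner whose slope jump is determined by the four slopes $(P_{i}\circ f)'(x_{0})$, $(P_{i}\circ f)'(x_{0}+c)$ --- a quantity with no relation to $\omega$. Thus $\omega_{C_{o}}(x_{0})=0$ is perfectly possible while $\omega$ is large. The classical Wronskian fact you invoke relies on the Wronskian being (up to a unit) basis-independent, which fails for the tropical Casoratian; indeed the paper's own proof of Theorem \ref{T1} only ever extracts from $N(r,1_{o}\oslash C_{o}(g_{0},\ldots,g_{M}))$ the \emph{untruncated} counting functions of the basis elements $g_{0},\ldots,g_{M}$ and gains nothing for $j\geq M+2$, which is precisely why the truncated statement is left open. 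Averaging over $c$ does not repair this, since the failure is not a degeneracy of the shift. A genuinely new mechanism for absorbing the excess multiplicities of $P_{j}\circ f$, $j\geq 3$, would be needed; note also that tropical multiplicities are arbitrary positive reals, so the ``$\omega-1$'' truncation heuristic does not even have the right form here.
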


Moreover, we may give a conjecture on truncated form of tropical Cartan second main theorem corresponding to Corollary \ref{C0} as follows. Here $N^{k)}(r, f)$ ia denoted to be the counting function of poles of $f$ with multiplicities truncated by $k$ (that is, the multiplicity is only counted $k$ whenever its multiplicity is greater than $k$).\par

\begin{conjecture} Let $q$ and $n$ be positive integers with $q\geq n.$ Let the tropical holomorphic curve $f: \mathbb{R}\rightarrow\mathbb{TP}^{n}$ be tropical algebraically nondegerated. Assume that $V_{P_{j}}$ are defined by are homogeneous tropical polynomials $P_{j}$ $(j=1, \ldots, q)$ with degree $1,$ respectively.
If $\lambda=ddg (\{P_{n+2}\circ f, \ldots, P_{q}\circ f\})$ and $\limsup_{r\rightarrow\infty}\frac{\log T_{f}(r) }{r}=0,$  then
\begin{eqnarray*}&&
(q-n-1-\lambda)T_{f}(r)\leq\sum_{j=n+2}^{q}N^{n)}(r, \frac{1_{o}}{P_{j}\circ f}\oslash)+o(T_{f}(r))
\end{eqnarray*}
where $r$ approaches infinity outside an exceptional set of zero upper density measure. In special case whenever $\lambda=0,$ we have
\begin{eqnarray*}&&
(q-n-1)T_{f}(r)=\sum_{j=n+2}^{q}N^{n)}(r, \frac{1_{o}}{P_{j}\circ f}\oslash)+o(T_{f}(r))
\end{eqnarray*}
where $r$ approaches infinity outside an exceptional set of zero upper density measure.
\end{conjecture}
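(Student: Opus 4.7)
The plan is to strengthen the untruncated bound of Corollary \ref{C0},
\begin{equation*}
(q-n-1-\lambda)T_f(r) \leq \sum_{j=1}^{q} N(r,1_o\oslash(P_j\circ f)) - N(r,1_o\oslash C_o(P_1\circ f,\ldots,P_{n+1}\circ f)) + o(T_f(r)),
\end{equation*}
by showing that the Casoratian counting term absorbs, in addition to the full mass of the first $n+1$ terms $N(r,1_o\oslash(P_j\circ f))$ as already in the proof of Theorem \ref{T1}, also the excess multiplicities beyond $n$ of the remaining terms, so that
\begin{equation*}
\sum_{j=1}^{q}N(r,1_o\oslash(P_j\circ f))-N(r,1_o\oslash C_o(P_1\circ f,\ldots,P_{n+1}\circ f))\leq\sum_{j=n+2}^{q}N^{n)}(r,1_o\oslash(P_j\circ f))+o(T_f(r)).
\end{equation*}
This is the tropical analog of the key ramification estimate in Cartan's classical truncated second main theorem, where the Wronskian of the coordinates of the holomorphic curve has a zero of order $\geq \mu - n$ at any zero of $\langle a_j, f\rangle$ of order $\mu$.

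The main step is therefore a tropical Casoratian multiplicity lemma of the form $\omega_{C_o(g_0,\ldots,g_n)}(x_0)\geq \omega_{g_j}(x_0)-n$ for every corner $x_0$ of $g_j=P_{k_j}\circ f$, which I would derive directly from the permutation expansion $C_o(g_0,\ldots,g_n)=\bigoplus_\pi \overline{g_0}^{[\pi(0)]}\odot\cdots\odot \overline{g_n}^{[\pi(n)]}$ by a slope-jump analysis: each summand has slope jump at $x_0$ equal to the sum of the slope jumps of the $g_j$ at the shifted points $x_0+\pi(j)c$, of which at most $n$ can coincide with a corner of a given $g_j$, and convexity of the tropical max forces the residual slope jump into $C_o$. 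Propagating this bound from $j\leq n+1$ to $j\geq n+2$ would use the algebraic nondegeneracy hypothesis to express $P_j\circ f$ as a tropical linear combination of $P_1\circ f,\ldots,P_{n+1}\circ f$ (after a suitable change of basis in the Gondran--Minoux span) and then to track how corners of $P_j\circ f$ force simultaneous corner structure in the base block. Integration along $(-r,r)$ converts the pointwise bound into the desired $N$-estimate, with shift errors $N(r+kc,1_o\oslash g_j)-N(r,1_o\oslash g_j)=o(T_f(r))$ controlled by Lemma \ref{L2} exactly as in \eqref{E1} of the proof of Theorem \ref{T1}. Substituting the sharpened ramification estimate into Corollary \ref{C0} then yields the conjectured inequality, and the equality case $\lambda=0$ is inherited from the equality in Corollary \ref{C0}.

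The central obstacle is the tropical Casoratian multiplicity lemma itself. Classically the Wronskian version rests on Taylor expansion and row operations over the field of meromorphic functions, neither of which has a direct tropical avatar. In the tropical setting, multiplicities $\omega_{g_j}(x_0)$ are arbitrary positive reals rather than integers, the permutation realising the tropical maximum in $C_o$ may jump across $x_0$, and common corners of several $P_j\circ f$ or of their shifts must be handled without double-counting; moreover, propagating the excess from $P_j\circ f$ with $j\ge n+2$ to $C_o(P_1\circ f,\ldots,P_{n+1}\circ f)$ is substantially more delicate than the corresponding step in Cartan's classical proof, since the tropical Gondran--Minoux change of basis is not invertible in the familiar sense. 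Overcoming these difficulties, while keeping the shift errors produced by the $n+1$ evaluation points $x_0,\,x_0+c,\,\ldots,\,x_0+nc$ uniformly $o(T_f(r))$, will be the technical heart of any proof.
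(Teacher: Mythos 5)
This statement is stated in the paper only as a conjecture; the authors offer no proof of it, and they explicitly flag the truncated ramification term as an open problem (``it might be possible to consider the truncated form of tropical Cartan second main theorem in future''). Your proposal is therefore not being measured against an existing argument, and on its own terms it is a strategy outline rather than a proof: you yourself defer the decisive step, the tropical Casoratian multiplicity lemma $\omega_{C_o(g_0,\ldots,g_n)}(x_0)\geq\omega_{g_j}(x_0)-n$, to ``the technical heart of any proof.'' Everything downstream of that lemma (the shift-error control via Lemma \ref{L2}, the substitution into Corollary \ref{C0}, the inheritance of the equality case when $\lambda=0$) is routine and matches what the paper already does in the untruncated setting, so the entire content of the conjecture is concentrated in the step you have not supplied.

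Moreover, the sketch you give for that lemma has a concrete flaw. A corner of $g_j$ at $x_0$ produces a corner of the shifted function $\overline{g_j}^{[k]}$ at $x_0-kc$, not at $x_0$; so in the permutation expansion $C_o=\bigoplus_\pi \overline{g_0}^{[\pi(0)]}\odot\cdots\odot\overline{g_n}^{[\pi(n)]}$ only the monomials with $\pi(j)=0$ feel $\omega_{g_j}(x_0)$ at the point $x_0$ at all, and the maximizing permutation near $x_0$ need not be among them. Worse, your appeal to convexity is in the wrong direction: the tropical sum (maximum) of convex piecewise linear functions can \emph{erase} the corners of individual summands rather than accumulate them --- if one monomial is strictly dominant and linear near $x_0$, the max has no corner there regardless of how large the slope jumps of the other monomials are. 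This is exactly why the paper's proof of Theorem \ref{T1} only extracts the one-sided, untruncated bound \eqref{E6.3} from the Jensen formula, and it is why the classical Wronskian argument (which rests on differentiation lowering vanishing order by one, a mechanism with no tropical or difference analogue) does not transfer. The propagation step from the block $P_1\circ f,\ldots,P_{n+1}\circ f$ to $P_j\circ f$ with $j\geq n+2$ via a Gondran--Minoux ``change of basis'' is likewise unsupported, since such combinations are not invertible. As it stands the proposal identifies the right target but does not close, and its central heuristic would fail on simple examples.
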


\noindent{\bf Acknowledgement.} The first author would like to thank to Professor Ilpo Laine and Professor Risto Korhonen for introducing the tropical Nevanlinna theory when he had visited in University of Eastern Finland from June 2014 until May 2015.


\begin{thebibliography}{99}

\bibitem{biancofiore} A. Biancofiore, A hypersurface defect relation for a class of meromorphic maps, Trans. Amer. Math. Soc. 270(1982), 47-60.
\bibitem{cartan} H. Cartan, Sur les zeros des combinaisions linearires de $p$ fonctions holomorpes donnees, Mathemctica (Cluj) 7(1933), 80-103.
\bibitem{cherry-ye} W. Cherry and Z. Ye, Nevanlinna's Theory of Value Distribution Second Main Theorem and its error terms, Springer-Verlag, Berlin, 2001.
\bibitem{gondran-minoux-1} M. Gondran and M. Minoux, Graphes et algorithmes, volume 37 of Collection de la Direction
des Etudes et Recherches d\'{e}s Electricit\'{e} de France [Collection of the Department of Studies
and Research of Electricit\'{e} de France]. Paris: \'{E}ditions Eyrolles, 1979.
\bibitem{gondran-minoux-2} M. Gondran, and M. Minoux, Linear algebra in dioids: a survey of recent results, In
Algebraic and combinatorial methods in operations research, volume 95 of North-Holland Math. Stud., pages 147-163. Amsterdam: North-Holland, 1984.

\bibitem{griffiths1972} P. A. Griffiths, Holomorphic mappings: survey of some results and dicussion of open problems, Bull. Amer. Math. Soc. 78(1972), No. 3, 374-382.
\bibitem{halburd-korhonen2007} R. G. Halburd and R. J. Korhonen, Finite-order meromorphic solutions and the discrete Painlev\'{e} equations, Proc. London Math. Soc. 94(2007), No. 2, 443-474.
    \bibitem{halburd-korhonen-tohge2014} R. G. Halburd, R. J. Korhonen and K. Tohge, Holomorphic curves with shift-invariant hyperplane preimages, Trans. Amer. Math. Soc. 366(2014), No. 8, 4267-4298.

\bibitem{halburd-southall} R. G. Halburd and N. J. Southall, Tropical Nevanlinna theory and ultradiscrete equations, Int. Math. Res. Not. IMRN 5(2009), 887-911.
\bibitem{hayman} W. K. Hayman. Meromorphic Functions, Oxford Mathematical Monographs,
Clarendon Press, Oxford, 1964.
\bibitem{hinkkanen}   A. Hinkkanen, A sharp form of Nevanlinna's second fundamental theorem, Invent. Math. 108 (1992), 549-574.

\bibitem{hu-yang} P. C. Hu and C. C. Yang, The second main theorem of holomoprhic curves into projective spaces, Trans. Amer. Math. Soc. 363(2011), No. 12, 6465-6479.

\bibitem{kleene} S. C. Kleene, Representation of events in nerve nets and finite automata,
In Automata studies, Annals of mathematics studies, No. 34, pages 3-41, Princeton University Press, 1956.
\bibitem{korhonen-laine-tohge} R. J. Korhonen, I. Laine, K. Tohge, Tropical Value Distribution Theory and Ultra-Discrete Equations, World Scientific publisheing Co. Pte. Ltd.,  Singapore, 2015.

 \bibitem{korhonen-tohge-2016} R. J. Korhonen and K. Tohge, Second main theorem in the tropical projective space, Advances Math. 298(2016), 693-725.

 \bibitem{laine-liu-tohge} I. Laine, K. Liu and K. Tohge, Tropical variants of some complex analysis results, Ann. Acad. Sci. Fenn. Math. 41(2016),  923-946.

\bibitem{laine-tohge} I. Laine and K. Tohge, Tropical Nevanlinna theory and second main theorem, Proc. London. Math. Soc. 102(2011), No. 5, 883-922.
\bibitem{laine-yang} I. Laine and C. C. Yang, Tropical versions of Clunie and Mohon'ko lemmas, Complex Var. Elliptic Equ. 55(2010), No. 1-3, 237-248.
\bibitem{maclagan-sturmfels} D. Maclagan, B. Sturmfels, Introduction to Tropical Geometry, Graduate Studies inMathematics, Volume 161, American Mathematical Society, Providence, Rhode Island, 2015.
\bibitem{mikhalkin} G. Mikhalkin, Enumerative tropical algebraic geometry in $\mathbb{R}^{2},$ J. Amer. Math. Soc. 18(2005), No. 2, 313-377.
\bibitem{nevanlinna} R. Nevanlinna, Xur Theorie der meromorphen Funktionen, Acta. Math. 46(1925), 1-99.
\bibitem{pin} J. E. Pin, Tropical semirings, Idempotency (Bristol, 1994), 50-69, Publ. Newton Inst., 11,
Cambridge Univ. Press, Cambridge, 1998.

\bibitem{ru2004} M. Ru, A defect relation for holomorphic curves intersecting hypersurfaces, Amer. J. Math. 126(2004), 215-226.
\bibitem{ru} M. Ru, Nevanlinna Theory and its Relation to Diophatine Approximation, Singapore: World Scientific Publishing Co., 2001.
\bibitem{siu} Y. T. Siu, Defect relations for holomorphic maps between spaces of different dimensions, Duke Math. J. 55(1987), No. 1, 213-251.

\bibitem{shiffman} B. Shiffman, Holomorphic curves in algebraic manifolds, Bull. Amer. Math. Soc. 83(1977), 553-568.

\bibitem{ZK}J. Zheng and R. Korhonen, Studies of differences from
the point of view of Nevanlinna theory, Arxiv.org/abs/1806.00212
\end{thebibliography}
\end{document}